\documentclass[10pt,reqno]{amsart}
\usepackage[numbers]{natbib}
\usepackage{amsaddr}
\usepackage{etoolbox}
\patchcmd{\section}{\scshape}{\bfseries\scshape}{}{}
\makeatletter
\renewcommand{\@secnumfont}{\bfseries}
\makeatother

\makeatletter
\renewcommand\subsection{\@startsection{subsection}{2}%
  \z@{-.5\linespacing\@plus-.0\linespacing}{0.3\linespacing}%
  {\bfseries}}
\makeatother

\pdfoutput=1 %
\usepackage[a4paper,margin=3cm]{geometry}

\usepackage[utf8]{inputenc} %
\usepackage[T1]{fontenc}    %

\usepackage[final]{hyperref}      %
\hypersetup{
    colorlinks=true,
    citecolor=green,
    filecolor=black,
    linkcolor=blue,
    urlcolor=blue
}

\usepackage{lipsum}
\usepackage{amsfonts}
\usepackage[dvipsnames]{xcolor}
\usepackage{graphicx}
\usepackage{epstopdf}
\usepackage{algorithmic}
\ifpdf
  \DeclareGraphicsExtensions{.eps,.pdf,.png,.jpg}
\else
  \DeclareGraphicsExtensions{.eps}
\fi

\usepackage{amsopn}

\usepackage{amssymb,amsmath}
\usepackage{dsfont}
\usepackage{url}
\usepackage{hyperref}

\usepackage{tikz}

\newcommand{\indic}{\mathds{1}}
\newcommand{\e}{\varepsilon}

\newcommand{\Ee}{\mathbb{E}}
\newcommand{\Rr}{\mathbb{R}}
\newcommand{\Pp}{\mathbb{P}}

\newcommand{\Zz}{\mathbb{Z}}

\newcommand{\cC}{\mathcal{C}}

\newcommand{\cH}{\mathcal{H}}

\newcommand{\cB}{\mathcal{B}}

\newcommand{\cX}{\mathcal{X}}
\newcommand{\cN}{\mathcal{N}}

\newcommand{\cM}{\mathcal{M}}
\newcommand{\cP}{\mathcal{P}}

\newcommand{\cU}{\mathcal{U}}
\newcommand{\cL}{\mathcal{L}}
\newcommand{\cE}{\mathcal{E}}

\newcommand{\ubar}[1]{\text{\b{$#1$}}}
\allowdisplaybreaks %

\newcommand{\BEAS}{\begin{eqnarray*}}
\newcommand{\EEAS}{\end{eqnarray*}}
\newcommand{\BEA}{\begin{eqnarray}}
\newcommand{\EEA}{\end{eqnarray}}
\newcommand{\BEQ}{\begin{equation}}
\newcommand{\EEQ}{\end{equation}}
\newcommand{\BIT}{\begin{itemize}}
\newcommand{\EIT}{\end{itemize}}
\newcommand{\BNUM}{\begin{enumerate}}
\newcommand{\ENUM}{\end{enumerate}}
\newcommand{\BA}{\begin{array}}
\newcommand{\EA}{\end{array}}

\newcommand{\argmin}{\mathop{\rm argmin}}

\newcommand{\tr}{\mathop{ \rm tr}}

\newcommand{\rb}{\mathbb{{R}}}

\newcommand{\ds}{\displaystyle }

 \def \ds { \displaystyle}

\def \X{{\mathcal X}}

\newtheorem{lemma}{Lemma}
\newtheorem{theorem}{Theorem}
\newtheorem{proposition}{Proposition}
\newtheorem{corollary}{Corollary}
\newtheorem{definition}{Definition}
\newtheorem{remark}{Remark}

\makeatletter%
\@mparswitchfalse%
\makeatother%
\normalmarginpar%

\usepackage[shortlabels]{enumitem}

\usepackage{tikz}
\usetikzlibrary{automata, positioning, arrows, calc}

\tikzset{
	>=stealth, %
	shorten >=2pt, shorten <=2pt, %
	node distance=4cm, %
	initial text=$ $, %
 }

\title[A stochastic optimization approach to control-affine optimal control problems]{A stochastic optimization approach to control-affine optimal control problems}

\date{\today}

\makeatletter
\g@addto@macro{\endabstract}{\@setabstract}
\newcommand{\authorfootnotes}{\renewcommand\thefootnote{\@fnsymbol\c@footnote}}%
\makeatother

\begin{document}

\begin{center}
	\LARGE 
	A Stochastic Optimization Approach to\\ Control-Affine Optimal Control Problems \par \bigskip
	
	\normalsize
	Eloïse Berthier\textsuperscript{1}, Ziad Kobeissi\textsuperscript{2} and
	Francis Bach\textsuperscript{3} \par \bigskip
	
	\textsuperscript{1}U2IS, ENSTA, Institut Polytechnique de Paris, Palaiseau, France
	\smallskip \par
	\textsuperscript{2}L2S, Inria, Université Paris-Saclay, CentraleSupelec, Gif-sur-Yvette, France\smallskip \par 
    \textsuperscript{3}Inria,
École Normale Supérieure, PSL Research University, Paris, France\smallskip \par 
\end{center}

\begin{abstract}
    We consider control-affine optimal control problems on the torus, where the dynamics and cost functions are only accessed through samples. Starting from a weak formulation of such problems, we derive a dual, a primal, and a primal-dual formulation,  compatible with stochastic optimization.  We show convergence of stochastic first-order methods to the optimal value  under generic conditions. In addition, we introduce a computable metric that upper-bounds the performance of suboptimal controllers produced during optimization, under additional regularity assumptions. Preliminary results show that the method can efficiently solve a simple control problem. Finally, we discuss conditions for the boundedness of the optimal occupation measure, a key assumption for the primal and primal-dual approaches. 
\end{abstract}

\section{Introduction}

Nonlinear optimal control problems (OCP)~\cite{vinter2010optimal} are generic modeling tools used in a variety of applications such as automobile, aerospace or robotics. In general, the numerical resolution of such problems, e.g., using direct or indirect methods \cite{rao2009survey}, is computationally costly, especially as the dimensions of the problem grow large. Furthermore, to employ such methods, a model---the dynamics of the system and the cost functions---must be known in advance. Approaches extending numerical methods for OCP to unknown models observed through samples are gathered under the name of data-driven or sample-based optimal control~\cite{prag2022toward}. Yet, theoretical analyses of such methods are often limited to the linear quadratic regulator (LQR) problem~\cite{dean2020sample}. 

Reinforcement learning (RL)~\cite{Sutton1998} is a subfield of machine learning dedicated to solving optimal control problems whose model is initially unknown, but progressively observed through state transitions and rewards (or negative costs), as the system interacts with its environment. Given this definition, the frontier with data-driven optimal control is often blurred~\cite{jiang2020learning, bensoussan2022machine}. RL is commonly used in applications involving simulated environments, in video games, but also increasingly in robotics \cite{kober2013reinforcement}. Such applications often involve human interactions, stochasticity, or other unmodeled phenomena. Stochastic approximation is a well-adapted approach to tackle this specificity of RL, where the model is only accessed through samples. This includes temporal-difference based algorithms, and stochastic optimization algorithms~\cite{meyn2022control}. Yet, beyond tabular settings (\textit{i.e.}, the state space is finite and not too large), and simple policy evaluation algorithms such as TD-learning, most RL algorithms only benefit from limited theoretical guarantees. For example, the SARSA and Q-learning algorithms with linear function approximation only converge under restrictive assumptions~\cite{melo2008analysis}, while policy gradient~\cite{sutton1999policy} and actor-critic algorithms~\cite{konda1999actor} only converge to stationary points~\cite{dai2018sbeed}. In both cases, little guarantees on the performance of the obtained policies (or controller) are provided.

Contrary to RL, supervised learning benefits from a richer collection of theoretical analyses, from a thorough understanding of least-squares regression~\cite{dieuleveut2017stochastic}, to more recent results on the convergence of stochastic algorithms in the optimization of wide neural networks~\cite{chizat2019lazy}, and non-convex optimization problems~\cite{danilova2022recent}. In particular, the stochastic gradient descent (SGD) algorithm~\cite{bottou2018optimization} is an essential tool for large-scale training, as it allows to iteratively optimize a function expressed as an expectation---typically a loss function---using only gradients computed at individual samples.

In this paper, we explore the following direction: can we use stochastic optimization to solve OCPs, like in supervised learning, \textit{i.e.}, with theoretical guarantees? As a first step towards this goal, we study a deterministic, infinite-horizon, control-affine  OCP on the torus. Using the linear-programming formulation of OCP~\cite{vinter1978equivalence,lasserre2008nonlinear}, we investigate under which conditions the primal and dual versions can be considered as convex and concave stochastic optimization problems, on which SGD can be used. 

While the dual formulation, which optimizes over value functions using a non-parametric representation, is naturally compatible with SGD, the convergence guarantee is on the residuals of the Hamilton-Jacobi-Bellman (HJB) equation. This does not provide any straightforward indication on the quality of the controller. The primal formulation, which involves optimizing over occupation measures~\cite{bhatt1996occupation}, is linked to the policy evaluation problem and hence naturally provides a control on the performance of the obtained controller. However, a stochastic optimization version of the primal cannot be obtained for any control-affine dynamics, but only for dynamics that are linear in the controller.

We introduce an additional primal-dual version of the OCP, compatible with stochastic optimization for any control-affine dynamics, with similar guarantees on the performance of the controller using a duality gap metric. This convex-concave primal-dual formulation manipulates both a value function, and state and control densities, which appear in similar optimization problems in optimal transport~\cite{benamou2000computational} or in mean-field games~\cite{ruthotto2020machine}.  Various other  primal-dual formulations, some of which convex-concave, others not,  have been studied for discrete or linearly approximated RL problems~\cite{hernandez2012discrete,wang2016online, wang2017primal, chen2018scalable, dai2018boosting, nachum2020reinforcement, jin2020efficiently,  lee2019stochastic,tiapkin2022primal,li2024accelerating}, with recent works making LP-based methods for RL an active field~\cite{malek2014linear,lakshminarayanan2017linearly, bas2022lagragian, gabbianelli2024offline, wolter2025two,lee2025analysis}. %
Moreover, actor-critic methods for continuous control problems, based on iteratively optimizing on the value function and the policy, significantly differ from our primal-dual approach in the time-scale of the policy updates (see Section~\ref{subsec:PDcomp}).

Finally, stochastic optimization can be employed on the primal, dual and primal-dual formulations under a boundedness condition on the optimal state occupation measure. In most of this paper, we assume that this condition is fulfilled, and we study this boundedness problem, of separate interest, in Section~\ref{sec:mustar}.

The rest of this work is structured as follows. In Section~\ref{sec:def}, we present generic results controlling the magnitude of the optimal value function and controller, and state the main assumptions. We then present in Section~\ref{sec:lp} the primal and dual linear programming (LP) formulations of the OCP, and show a duality result, along with an  equivalence with the original OCP, under mild assumptions. In Section~\ref{sec:dual}, we provide an unconstrained dual formulation of the OCP, compatible with SGD, along with a convergence result. We also illustrate with a simple numerical example the limitations of such convergence guarantees. In Section~\ref{sec:primal}, we study the unconstrained primal formulation, and show that it is compatible with SGD for dynamics that are linear in the control, along with a convergence result. We link this primal problem with policy evaluation, hence providing a way to measure the quality of the obtained controller, and illustrate its interest on a simple example.  In Section~\ref{sec:primaldual}, we introduce a primal-dual problem, compatible with stochastic optimization for generic control-affine dynamics, with performance bound over the controller. In Section~\ref{sec:mustar}, we study  conditions under which the optimal state-occupation measure can be bounded, a required condition to apply the above approaches. Numerical examples of the proposed approaches are provided throughout the different sections.

\section{Definition of the problem and preliminary results} \label{sec:def}

Let $d$ and $m$ be positive integers, and $\rho >0$ a discount factor. We consider the following discounted, infinite-horizon, control-affine optimal control problem on the $d$-dimensional torus $\cX = [0, 1]^d$: 
\begin{align} \tag{OCP}\label{eq:OCP}
   & V^*(x) = \inf_{u:\Rr_+ \rightarrow \Rr^m}~~ \rho \int_0^{+\infty} e^{-\rho t} \left( f(x_t) + R(u_t) \right) \mathrm dt  \\
 \nonumber  &  \text{such that } ~ \forall t \geq 0, ~ \dot x_t = a(x_t) + B(x_t) u_t \, ~\text{ and }  x_0=x,
\end{align}
where $x : \Rr_+ \rightarrow \cX $ is a state trajectory, $u:\Rr_+ \rightarrow \Rr^m$ is a controller. The affine dynamics $b(x, u) = a(x) + B(x) u$ is defined by $a : \cX \rightarrow \Rr^d$, $B : \cX \rightarrow \Rr^{d \times m}$, and the running cost is composed by the sum of a non-negative state cost $f : \cX \rightarrow \Rr_+$, %
and a control cost defined by $R(u)=\|u\|^q/q$, for some $q\in (1, 2]$, where $\| \cdot \|$ denotes the Euclidean norm. %
We make the following assumptions on the dynamics and cost:
\begin{enumerate}[label={\bf A1}]
    \item
    \label{hypo:regularity}
    $f, a, B $ are 
uniformly Lipschitz continuous. %
 In particular, they are uniformly bounded on their compact domain $\cX$, and 1-periodic along each dimension.
\end{enumerate}
This assumption directly allows us to derive 
a uniform bound on~$V^*$.
\begin{proposition}
    \label{prop:V*_bounded}
     $V^*$ is non-negative and uniformly bounded by $\|f\|_\infty$.
\end{proposition}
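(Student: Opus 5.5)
The plan is a two-sided bound obtained by comparing $V^*(x)$ with the cost of one explicit admissible controller. Both bounds follow from the structure of the running cost $f(x_t)+R(u_t)$ and the normalisation of the discount weight. We have $\rho\int_0^{+\infty} e^{-\rho t}\,\mathrm dt = 1$, so the discounted cost functional is an average of the running cost against a probability density in time.

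\emph{Lower bound.} By assumption $f\geq 0$, and $R(u)=\|u\|^q/q\geq 0$. Take any controller $u$ for which the associated trajectory is well defined. By \ref{hypo:regularity}, $a$ and $B$ are Lipschitz, so for locally integrable $u$ the ODE $\dot x_t = a(x_t)+B(x_t)u_t$ has a unique absolutely continuous solution in the Carathéodory sense. Its cost is then the integral of a non-negative function, possibly $+\infty$, hence is non-negative. Taking the infimum over $u$ gives $V^*(x)\geq 0$.

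\emph{Upper bound.} Choose the zero controller $u_t\equiv 0$. The dynamics reduce to $\dot x_t = a(x_t)$. Since $a$ is Lipschitz and periodic, this flow exists globally on the torus by Cauchy--Lipschitz, and boundedness of $a$ rules out blow-up. Along this trajectory $R(u_t)=0$ and $f(x_t)\leq \|f\|_\infty$, where $\|f\|_\infty$ is finite because $f$ is continuous on the compact $\cX$. Therefore
\begin{equation*}
V^*(x)\leq \rho\int_0^{+\infty} e^{-\rho t} f(x_t)\,\mathrm dt \leq \|f\|_\infty\,\rho\int_0^{+\infty} e^{-\rho t}\,\mathrm dt = \|f\|_\infty .
\end{equation*}
This holds for every $x\in\cX$, which gives the uniform bound.

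There is no real obstacle here. The only point needing care is making precise the class of admissible controllers (for instance measurable, locally integrable $u$), so that trajectories are well defined and the zero controller belongs to the class. Once that class is fixed, both inequalities are immediate.
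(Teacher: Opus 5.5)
Your proof is correct and follows the paper's argument: non-negativity comes from $f\geq 0$ and $R\geq 0$, and the upper bound comes from evaluating the cost of the zero controller together with $\rho\int_0^{+\infty}e^{-\rho t}\,\mathrm dt=1$. Your remarks on the class of admissible controllers and on global existence of the flow are harmless details that the paper leaves implicit.
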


\begin{proof}
$V^*$ being non-negative follows from the non-negativity of $f$ and $R$. Let ${x \in \cX}$. Consider the suboptimal controller $u \equiv 0$ and its corresponding value function~$V_0$. We have:
\begin{align*}
    V^*(x) \leq V_0(x) = \rho \int_0^{+\infty} e^{-\rho t} (f(x_t)+0) \, \mathrm dt  \leq  \rho \int_0^{+\infty} e^{-\rho t} \|f\|_\infty \, \mathrm dt
     = \|f\|_\infty \, .
\end{align*}
\end{proof}

\begin{remark}
    The control-affine assumption is standard in the nonlinear control literature, and many examples of physical control systems can be modeled by equations of this form~\cite{isidori1985nonlinear, sastry2013nonlinear}. Control-affine systems are a significant extension of linear systems, since they allow for arbitrary nonlinearities in the state variable. Conversely, the affine dependence in the input is often a consequence of physical laws. %
     For example, Newton's second law of motion for a rigid-body system can be written in the following canonical form:
\begin{align*}
    \tau = H(q) \ddot q + C(q, \dot q) \, ,
\end{align*}
where $q$, $\dot q$ and $\ddot q$ are vectors of position, velocity and acceleration
variables, respectively, $\tau$ is a vector of applied forces (torque), and $H$ and $C$ represent inertia, and Coriolis and centrifugal forces. It can be reformulated as:
\begin{align*}
     \ddot q = H(q)^{-1} \left( \tau - C(q, \dot q) \right) \, .
\end{align*}
 While this relation is nonlinear in $x=(q, \dot q)$, it is affine in $u=\tau$. For rigid bodies with several joints and contacts like robots, $H$ and $C$ are usually not known in closed-form.  To compute them numerically, modern robotics~\cite{featherstone2008rigid} relies on the 
articulated body dynamics (ABA) algorithm or on the composite rigid body algorithm (CRBA) with the recursive Newton-Euler algorithm (RNEA), for which  efficient numerical implementations are available~\cite{todorov2012mujoco,carpentier2019pinocchio}. Such ways to obtain the components of the dynamics are compatible with our stochastic approximation approach, which only requires evaluations at stochastic configurations $(q, \dot q)$. %
\end{remark}

\subsection{Optimal controller}

The optimal value function $V^*$ is the unique viscosity solution~\cite{crandall1983viscosity,bardi1997optimal} of the 
HJB equation 
\begin{align*}
 - V^\ast(x) + f(x) + \inf_{u \in \Rr^m}  \left\{ R(u) + \frac{1}{\rho} \nabla V^\ast(x) ^\top (a(x) + B(x) u) \right\} = 0 \, .
\end{align*}
In particular, this formulation does not presuppose the pointwise existence of $\nabla V^\ast$. 
Define $R^*$ as the convex conjugate (or Legendre transform) of $R$, given by $R^\ast(v)=\|v\|^{q'}/q'$, where $q'\in[2,+\infty)$ is the conjugate exponent of $q$, \textit{i.e.}, $1/q+1/q'=1$.
At any point $x$ where $V^*$ is differentiable, the infimum in $u$ is uniquely attained, by strict convexity of $R$, and the optimal control is
\begin{align*}u^*(x) = \nabla R^* \left( - \frac{1}{\rho} B(x)^\top\nabla V^*(x) \right),
\end{align*}
which depends on $\nabla V^*(x)$ only through $B(x)^\top\nabla V^*(x)$.

For this characterization to be of any use, $u^*$ must be
defined at least almost everywhere, that is,
$B^{\top}\nabla V^*$ must be defined a.e.
As $V^*$ is so far only known to be continuous,
this is not guaranteed,
and we enforce it through the following assumption.
\begin{enumerate}[label={\bf A2}]
\item
\label{hypo:alpha}
There exists $\alpha\in L^{\infty}(\cX;\Rr^m)$
such that $a(x)=B(x)\alpha(x)$, for all $x\in\cX$.
\end{enumerate}

This assumption is made only to fix ideas: it may be replaced 
by any condition guaranteeing that $u^*$ is defined
and uniformly bounded a.e. 
(as established in Proposition~\ref{prop:bound} below).
Note moreover that, under Assumption~\ref{hypo:alpha},
the HJB equation involves $\nabla V^*$
only through $B^\top\nabla V^*$, 
since $\nabla V^{*\top}a = (B^\top\nabla V^*)^\top\alpha$;
we therefore never need $\nabla V^*$ itself to be defined a.e.,
which would otherwise be required if $a(x)\notin{\rm Im} B(x)$.

\begin{proposition} \label{prop:bound}
    Assume \ref{hypo:regularity} and \ref{hypo:alpha},
    the optimal control $u^*$ satisfies
\begin{align*}
        \|u^*(x)\|
        \leq
        C_{u}
        {:=
        \left(2 q'\|f\|_{\infty}+2^q(q'-1)\|\alpha\|_{\infty}\right)^{\frac1{q}}
        \qquad\text{ for a.e. }x\in\cX.
        } 
    \end{align*} 
\end{proposition}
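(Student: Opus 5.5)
The plan is to read the bound directly off the HJB equation at points where $V^*$ is differentiable, using the Legendre transform to make the infimum in $u$ explicit. The bound on $u^*$ then reduces to a scalar inequality on $\|B^\top\nabla V^*\|$.

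Fix $x$ where $V^*$ is differentiable, or more precisely where $B(x)^\top\nabla V^*(x)$ is defined, as Assumption~\ref{hypo:alpha} allows. At such a point the viscosity sub- and supersolution inequalities both hold with the classical gradient, so the HJB equation holds pointwise. Set $p := -\frac1\rho B(x)^\top\nabla V^*(x)\in\Rr^m$. By definition of the convex conjugate,
\begin{align*}
\inf_{u}\Big\{R(u)+\tfrac1\rho\nabla V^*(x)^\top B(x)u\Big\}=\inf_u\{R(u)-p^\top u\}=-R^*(p),
\end{align*}
and the infimum is attained at $u^*(x)=\nabla R^*(p)=\|p\|^{q'-2}p$. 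Using \ref{hypo:alpha}, $\frac1\rho\nabla V^*(x)^\top a(x)=\frac1\rho(B^\top\nabla V^*)^\top\alpha=-p^\top\alpha(x)$. The HJB equation then becomes
\begin{align*}
R^*(p)=f(x)-V^*(x)-p^\top\alpha(x).
\end{align*}
Since $V^*\ge 0$ by Proposition~\ref{prop:V*_bounded}, this gives $\frac{\|p\|^{q'}}{q'}\le\|f\|_\infty+\|p\|\,\|\alpha\|_\infty$.

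Next, absorb the linear term with a weighted Young inequality. Writing $A=\|\alpha\|_\infty$, for $\varepsilon>0$ we have $\|p\|A\le\frac{(\varepsilon\|p\|)^{q'}}{q'}+\frac{(A/\varepsilon)^q}{q}$. Choosing $\varepsilon^{q'}=\tfrac12$ gives $\|p\|A\le\frac{\|p\|^{q'}}{2q'}+\frac{2^{q-1}}{q}A^q$, using $q/q'=q-1$. Plugging this in and multiplying by $2q'$ yields
\begin{align*}
\|p\|^{q'}\le 2q'\|f\|_\infty+2^q\tfrac{q'}{q}A^q=2q'\|f\|_\infty+2^q(q'-1)A^q,
\end{align*}
where $q'/q=q'-1$. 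Finally, $\|u^*(x)\|=\|p\|^{q'-1}$, so $\|u^*(x)\|^q=\|p\|^{q(q'-1)}=\|p\|^{q'}$. Taking $q$-th roots gives the stated $C_u$. Note that this argument produces $\|\alpha\|_\infty^q$ rather than $\|\alpha\|_\infty$ in the constant, presumably the intended reading.

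The main obstacle is justifying that the HJB equation holds as an equality at almost every $x$, that is, at a.e.\ point where the quantities above make sense. If $V^*$ is Lipschitz, this follows from Rademacher's theorem together with the standard fact that a viscosity solution satisfies the equation classically at every differentiability point. I would try to establish Lipschitz continuity of $V^*$ from \ref{hypo:regularity} via the usual trajectory-comparison argument. If only $B^\top\nabla V^*$ is available a.e., I would instead argue with sub- and superdifferentials restricted to directions in $\mathrm{Im}\,B$. The remaining computations are routine.
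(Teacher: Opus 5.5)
\textbf{There is a genuine gap: the step that lets you apply the HJB equation pointwise almost everywhere.}

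Your pointwise algebra is correct and is exactly the paper's computation. You rewrite the drift term as $-p^\top\alpha(x)$ via \ref{hypo:alpha}, use $V^*\ge 0$, absorb $\|p\|\,\|\alpha\|_\infty$ with the weight $\varepsilon^{q'}=1/2$, and use $\|u^*\|^q=\|p\|^{q'}$. You are also right that this yields $2^q(q'-1)\|\alpha\|_\infty^q$, which is what the paper's proof actually obtains.

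The step you flag as the main obstacle is the real content, and your proposed route fails under \ref{hypo:regularity}--\ref{hypo:alpha} alone.
\begin{itemize}
\item The trajectory-comparison argument gives $|V^*(x)-V^*(y)|\le \rho\int_0^{\infty}e^{-\rho t}\,\mathrm{Lip}(f)\,e^{Lt}\|x-y\|\,\mathrm dt$. Here $L$ is a Lipschitz constant of $x\mapsto a(x)+B(x)u$, uniformly over the controls used. This is finite only when $\rho>L$; otherwise the argument yields at best H\"older continuity.
\item $L$ contains $\mathrm{Lip}(B)\sup\|u\|$, and a uniform bound on (near-)optimal controls is precisely what you are proving. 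Corollary~\ref{cor_constr} is deduced from this proposition, so invoking bounded controls is circular.
\item Controllability cannot rescue Lipschitz continuity either: $B(x)$ may be rank-deficient, and then the Hamiltonian is not coercive along $\ker B(x)^\top$. The paper proves $V^*$ Lipschitz only in Lemma~\ref{lem:bound_gradient}, under the stronger \ref{hypo:C2}, and that proof uses the present bound.
\item Your fallback (sub- and superdifferentials restricted to $\mathrm{Im}\,B$) gives no mechanism for showing that $B^\top\nabla V^*$ exists a.e.
\end{itemize}

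\textbf{The missing idea is a vanishing non-degenerate regularization.}
\begin{enumerate}
\item Add an auxiliary control $\tilde u\in\Rr^d$ entering the dynamics as $\eta\tilde u$ with cost $\|\tilde u\|^2/2$. The HJB equation gains a term $\frac{\eta^2}{2}\|\nabla V_\eta\|^2$, which makes the Hamiltonian coercive.
\item Hence $V_\eta$ is Lipschitz, and your computation is legitimate a.e.\ for $r_\eta:=-\rho^{-1}B^\top\nabla V_\eta$.
\item The extra term is non-negative and sits next to $R^*(r_\eta)$, so it can simply be dropped. Thanks to \ref{hypo:alpha}, the drift term depends only on $r_\eta$. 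You therefore get $\|r_\eta\|_\infty\le C_r$ uniformly in $\eta$, even though $\|\nabla V_\eta\|_\infty$ may blow up like $1/\eta$.
\item By stability of viscosity solutions, $V_\eta\to V^*$ uniformly. Along a subsequence, $r_\eta\rightharpoonup r$ weak-$*$ in $L^\infty$.
\item Integrating by parts on the torus, $\int_\cX\varphi^\top r_\eta\,\mathrm dx=\rho^{-1}\int_\cX{\rm div}(B\varphi)\,V_\eta\,\mathrm dx$. Passing to the limit identifies $r=-\rho^{-1}B^\top\nabla V^*$ in the sense of distributions.
\end{enumerate}
This shows at once that $B^\top\nabla V^*$ is an $L^\infty$ function, so $u^*=\nabla R^*(r)$ is defined a.e. It also transfers the bound $\|r\|_\infty\le C_r$. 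At no point does it require $V^*$ itself to be Lipschitz or differentiable a.e.
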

    The proof, along with all the proofs of subsequent results, is deferred to Appendix~\ref{app:proofs}.  
Since we have derived a uniform bound on $u^*$, we can  formulate an equivalent OCP where the control is taken in a compact set, which will significantly simplify the  derivation of technical results afterwards.  
\begin{corollary} \label{cor_constr}  Let $C_u$ the bound computed in Proposition~\ref{prop:bound}, we define the compact set $\cU = \{ u \in \Rr^m \mid \|u\| \leq C_u \} \subset \Rr^m$. Then, for any $x\in \cX$ the value of the OCP is equal to the value of the following control-constrained OCP:
    \begin{align*} 
    \inf_{u:[0, +\infty) \rightarrow \cU} ~~\rho \int_0^{+\infty} e^{-\rho t} \left( f(x_t) + R(u_t) \right) \mathrm dt , ~ \dot x_t = a(x_t) + B(x_t) u_t , x_0=x .
\end{align*}
\end{corollary}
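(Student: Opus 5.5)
Denote by $V^*_{\cU}(x)$ the value of the control-constrained problem. Every controller with values in $\cU$ is admissible for \eqref{eq:OCP}, so $V^*(x) \le V^*_{\cU}(x)$. The whole proof therefore consists in establishing $V^*_{\cU}(x) \le V^*(x)$.

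The natural route is to compare the HJB equations rather than trajectories. $V^*_{\cU}$ is the unique bounded viscosity solution of the constrained HJB equation
\begin{align*}
 - V(x) + f(x) + \inf_{u \in \cU}  \left\{ R(u) + \frac{1}{\rho} p^\top (a(x) + B(x) u) \right\} = 0 \, ,
\end{align*}
with $p=\nabla V(x)$. I would show that $V^*$ is also a viscosity solution of this equation; uniqueness (a standard comparison principle, valid since the Hamiltonian is Lipschitz in $p$ on compact $\cU$ and $f,a,B$ are Lipschitz by \ref{hypo:regularity}) then gives $V^*=V^*_{\cU}$.

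To check the viscosity property, I would use the test-function characterization. Take a smooth $\varphi$ touching $V^*$ from above or below at $x_0$. Then $V^*$ satisfies the unconstrained equation in the viscosity sense with $p=\nabla\varphi(x_0)$. It then suffices to show that the unconstrained infimum, attained at $\nabla R^*(-\frac1\rho B^\top p)$, lies in $\cU$ for all such $p$.

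This is the main obstacle. Proposition~\ref{prop:bound} gives the bound $C_u$ only at points of differentiability, hence a.e., not for arbitrary test-function gradients. I expect its proof to bound $\|B^\top \nabla V^*\|$ using the HJB equation together with \ref{hypo:alpha}. The key identity is
\begin{align*}
V^* = f + \tfrac1\rho (B^\top\nabla V^*)^\top\alpha - R^*\!\left(-\tfrac1\rho B^\top \nabla V^*\right),
\end{align*}
combined with $0\le V^*\le\|f\|_\infty$ and the superlinear growth of $R^*$. The same algebra applies verbatim to any $p$ in the super/subdifferential, since the viscosity inequalities give the corresponding one-sided relations. For subsolution test functions, the inequality $V^*\le\ldots$ combined with $V^*\ge0$ yields the bound. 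For supersolution tests, I would instead argue via reachable gradients: $V^*$ is semiconcave-type, or more simply Lipschitz, so the generalized gradients are limits of a.e. gradients, which satisfy the bound by continuity.

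If the viscosity route becomes too delicate, a fallback is a direct trajectory argument. Given $\varepsilon>0$, take an $\varepsilon$-optimal control $u$ and replace it by its radial projection onto $\cU$. This projected control does not generally stay near-optimal along the original trajectory, so I would instead use the feedback $u^*$, extended measurably and clipped to $\cU$. Its value equals $V^*$ by a verification argument along a.e. trajectories. That verification is exactly where differentiability a.e. fails along trajectories, which is why I prefer the HJB comparison approach above.
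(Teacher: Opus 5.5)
The paper has no separate proof of this corollary. It reads it off Proposition~\ref{prop:bound}: the optimal feedback $u^*$ already takes values in $\cU$ a.e., so restricting the controls loses nothing. Your HJB-comparison route is a different and more careful way to make that remark rigorous, and its skeleton is right. Comparison for the constrained Hamiltonian holds under \ref{hypo:regularity}, and $V^*\le V^*_{\cU}$ is trivial.

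The subsolution half is in fact automatic, because $\inf_{u\in\cU}\{\cdots\}\ge\inf_{u\in\Rr^m}\{\cdots\}$. So the bound you derive on $D^+V^*(x_0)$ from $V^*\ge0$ is correct but not needed. Everything rests on the supersolution half: for every $p\in D^-V^*(x_0)$ you must show $\|\rho^{-1}B(x_0)^\top p\|\le C_u^{q-1}$. There the viscosity inequality gives nothing, since with $V^*\le\|f\|_\infty$ it only lower-bounds the coercive quantity $R^*(r)+\alpha(x_0)^\top r$, where $r=-\rho^{-1}B(x_0)^\top p$.

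This is where your argument has a genuine gap. Your reachable-gradient step needs one of two properties of $V^*$:
\begin{itemize}
\item Lipschitz continuity, so that $D^-V^*(x_0)$ lies in Clarke's gradient, the convex hull of limits of gradients at differentiability points;
\item semiconcavity, so that $D^-V^*(x_0)\neq\emptyset$ forces differentiability.
\end{itemize}
Neither is available under \ref{hypo:regularity}--\ref{hypo:alpha}. The paper stresses that $V^*$ is only known to be continuous there. Lipschitz continuity is proved only under \ref{hypo:C2} (Lemma~\ref{lem:bound_gradient}), and semiconcavity only under further conditions (Lemma~\ref{lem:SC}). Standard estimates for discounted problems give only H\"older continuity when $\rho$ is small relative to the Lipschitz constant of the dynamics.

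Moreover, Proposition~\ref{prop:bound} does not give the bound at points of differentiability of $V^*$, as you assume. It shows that the distribution $-\rho^{-1}B^\top\nabla V^*$ is the weak-$*$ limit of $r_\eta$ and hence lies in $L^\infty$. Such a.e. information does not constrain test-function gradients at a given point of a merely continuous function.

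The repair is to run your argument one level up, on the regularized values $V_\eta$ from the proof of Proposition~\ref{prop:bound}.
\begin{itemize}
\item Each $V_\eta$ is Lipschitz. At each of its differentiability points, the HJB identity together with $V_\eta\ge0$ gives $\|\rho^{-1}B^\top\nabla V_\eta\|\le C_u^{q-1}$, uniformly in $\eta$.
\item By continuity of $B$ and convexity of $\{p:\|\rho^{-1}B(x_0)^\top p\|\le C_u^{q-1}\}$, this bound extends to Clarke's gradient, hence to all of $D^-V_\eta(x_0)$.
\item So $V_\eta$ is a supersolution of the $\eta$-regularized equation with $u$ restricted to $\cU$ and $\tilde u$ free.
\item Let $\eta\to0$. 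Uniform convergence $V_\eta\to V^*$ and stability of viscosity supersolutions show that $V^*$ is a supersolution of the constrained equation.
\item Comparison then gives $V^*_{\cU}\le V^*$.
\end{itemize}
Once repaired this way, your route is a rigorous version of the one-line justification the paper gives.
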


\subsection{Occupation measure}
Let $\cC(\cX\times\Rr^m)$ be the space of real-valued
continuous functions from $\cX\times\Rr^m$ equipped with the topology
of local uniform convergence, which makes it a Frechet space.
Its topological dual is $\cM_c(\cX)$, 
the Banach space of compactly-supported signed measures.
Define $\cM_+(\cX\times\Rr^m)$ and $\cP_c(\cX\times\Rr^m)$
as the subspaces of $\cM_c(\cX\times\Rr^m)$
of non-negative measures and probability measures, respectively.
Similarly, we equip $\cC^1(\cX\times\Rr^m)$
with local $\cC^1$-convergence
and denote by $\cC^1(\cX\times\Rr^m)^*$ its topological dual.
When considering functional spaces on $\cX$
we drop the index $c$ since $\cX$ is compact, 
and the resulting spaces are Banach. Let $\cP(\cX)$ the space of probability measures on $\cX$, and $\cB(\cX)$ the $\sigma$-algebra of Borel sets.

Consider $\mu_0\in\cP(\cX)$ admitting
a full support and a  bounded density
with respect to the Lebesgue measure,
denoted by $\mu_0$ as well,  using a common abuse of notation. 
For $g : \cX \rightarrow \Rr^p$ Lipschitz continuous, 
following~\cite{gaitsgory2009linear},
we define the $\rho$-discounted occupation measure of the dynamical system $\dot x = g(x)$ as follows.
\begin{definition}%
    Consider a dynamical system $\dot x(t) = g(x(t))$, with $x(0) \sim \mu_0 \in \cP(\cX)$.  Let $B \in \cB(\cX)$ and $x_0 \in \cX$. The $\rho$-discounted conditional occupation measure of this dynamical system is defined as:
    \begin{align*} \nu_{x_0}(B) = \rho \int_0^{+\infty}  e^{-\rho t} \indic_B(x(t \mid x(0)=x_0)) \mathrm dt \, ,\end{align*}
    where $x(t \mid x(0)=x_0)$ is the position of $x$ at time $t$, given that the initial point $x(0)$ is set to $x_0$.
    The $\rho$-discounted occupation measure is defined as:
  \begin{align*} \nu(B) = \int_\cX \nu_{x_0}(B) \mathrm d\mu_0(x_0) \, .\end{align*}
\end{definition}

\begin{remark}
    \label{rk:exponential_rv}
    The measure $\nu$ can be interpreted as the expected distribution of $x(T)$ at a random stopping time $T \sim \textnormal{Exp}(\rho)$, and with random starting point ${X_0 \sim \mu_0}$:
\begin{align*}
    \nu(B) &= \Ee_{T \sim \text{Exp}(\rho), ~X_0 \sim \mu_0} \left[ \indic_B(x(T \mid x(0)=X_0)) \right] = \Pp_{T, X_0} \left[ x(T) \in B  \right] \, .
\end{align*}
\end{remark}

Such occupation measures are directly related to the dynamics $g$ and initial measure~$\mu_0$ through Liouville's equation~\cite{henrion2013optimization}, also referred to as a \textit{conservation of mass} principle~\cite{villani2009optimal}.
\begin{lemma} \label{lemma1}
    The $\rho$-discounted occupation measure $\nu$ with dynamics $\dot x = g(x)$ and initial state distribution $\mu_0$ is a solution, in the sense of distributions, %
   of Liouville's discounted equation: 
    \begin{align*}
         \nu = \mu_0 - \frac{1}{\rho} \textnormal{div}(g \nu) \, .
    \end{align*}
\end{lemma}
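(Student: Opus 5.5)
The plan is to check the identity against test functions. Being a solution in the sense of distributions on the torus means that for every $\varphi \in \cC^1(\cX)$ (1-periodic; by density one could also restrict to $\cC^\infty$ periodic functions) we have
\begin{align*}
\int_\cX \varphi \,\mathrm d\nu = \int_\cX \varphi \,\mathrm d\mu_0 + \frac{1}{\rho}\int_\cX \nabla\varphi(x)^\top g(x)\,\mathrm d\nu(x) \, .
\end{align*}
Here the sign follows from $\langle -\textnormal{div}(g\nu),\varphi\rangle = \langle g\nu, \nabla \varphi\rangle$, and no boundary terms appear on the torus. First, by the definition of $\nu_{x_0}$ and a standard monotone-class / simple-function argument, I will write for any bounded measurable $\varphi$
\begin{align*}
\int_\cX \varphi\,\mathrm d\nu_{x_0} = \rho\int_0^{+\infty} e^{-\rho t}\varphi(x(t\mid x_0))\,\mathrm dt \, .
\end{align*}
Since $g$ is Lipschitz, the flow $x(t\mid x_0)$ exists globally and is unique. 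It is also $\cC^1$ in $t$ with $\frac{\mathrm d}{\mathrm dt}\varphi(x(t)) = \nabla\varphi(x(t))^\top g(x(t))$.

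The key step is an integration by parts in time for fixed $x_0$:
\begin{align*}
\rho\int_0^{+\infty} e^{-\rho t}\varphi(x(t))\,\mathrm dt = \bigl[-e^{-\rho t}\varphi(x(t))\bigr]_0^{+\infty} + \int_0^{+\infty} e^{-\rho t}\nabla\varphi(x(t))^\top g(x(t))\,\mathrm dt \, .
\end{align*}
The boundary term at infinity vanishes because $\varphi$ is bounded on the compact $\cX$, so the bracket equals $\varphi(x_0)$. The last integral equals $\frac1\rho\int \nabla\varphi^\top g\,\mathrm d\nu_{x_0}$, using the representation above with the bounded continuous function $\nabla\varphi^\top g$. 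This gives the conditional identity
\begin{align*}
\int\varphi\,\mathrm d\nu_{x_0} = \varphi(x_0) + \frac1\rho\int\nabla\varphi^\top g\,\mathrm d\nu_{x_0} \, .
\end{align*}

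Finally, I integrate this conditional identity in $x_0$ against $\mu_0$. Measurability of $x_0\mapsto\int\psi\,\mathrm d\nu_{x_0}$ follows from the continuity of the flow in $x_0$ together with Fubini. All integrands are bounded by $\|\varphi\|_\infty$ or $\|\nabla\varphi\|_\infty\|g\|_\infty$, so Fubini--Tonelli applies and yields the weak identity. This is exactly $\nu = \mu_0 - \frac1\rho\textnormal{div}(g\nu)$ in $\cC^1(\cX)^*$.

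The main obstacle is bookkeeping rather than analysis. It consists of justifying the passage from indicator functions to general test functions, and the interchange of the $t$- and $x_0$-integrals, which rests on joint measurability of $(t,x_0)\mapsto x(t\mid x_0)$ (it is in fact continuous by Lipschitz dependence on initial data). It also requires fixing the sign convention of the distributional divergence on the periodic domain.
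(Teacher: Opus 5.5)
Your proposal is correct and follows the paper's proof essentially step by step. You rewrite $\langle \nu_{x_0},\varphi\rangle$ as the discounted time integral $\rho\int_0^{+\infty} e^{-\rho t}\varphi(x(t\mid x_0))\,\mathrm dt$ along the trajectory. You then integrate by parts in time to get $\langle \nu_{x_0},\varphi\rangle = \varphi(x_0) + \frac{1}{\rho}\langle \nu_{x_0},\nabla\varphi^\top g\rangle$, identify the last term with $-\frac{1}{\rho}\langle \textnormal{div}(g\nu_{x_0}),\varphi\rangle$, and integrate against $\mu_0$; your measurability and Fubini--Tonelli remarks only make explicit what the paper leaves implicit.
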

Applying this result to the optimal trajectories, with $g(x(t)) = b(x(t), u^*(x(t))$, we obtain that the $\rho$-discounted occupation measure $\mu^*$ under the optimal controller~$u^*$ verifies:
\begin{align*}
    \mu_0 = \mu^* + \frac{1}{\rho} \text{div}(b(\cdot, u^*(\cdot)) \mu^*) \, .
\end{align*}
Let us finally summarize the previously stated equations relating $V^*$, $u^*$ and $\mu^*$ in the following proposition.
\begin{proposition} \label{prop:optimality}
    The optimal controller $u^*$, the optimal value function $V^*$  and the corresponding occupation measure $\mu^*$ are solutions  of the following equations:
    \begin{equation*}
    \left\{
        \begin{aligned}
            &u^*(x) = \nabla R^*( - \rho^{-1} B(x)^\top\nabla V^*(x) ) 
            \\
            &- V^*(x) + f(x) + \rho^{-1} \nabla V^*(x) ^\top a(x)
            - R^*\big(  - \rho^{-1} B(x)^\top \nabla V^*(x)  \big)  = 0   
            \\
            &\mu_0(x) = \mu^*(x) + \rho^{-1} \textnormal{div} \left(b(x, u^*(x) ) \mu^*(x) \right),
        \end{aligned}
        \right.
    \end{equation*}
    where the first equality holds for a.e. $x\in\cX$,
    the second has to be understood in the sense of viscosity
    and the last one in the sense of distribution.
    \end{proposition}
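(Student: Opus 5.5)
This proposition gathers three facts already established or recalled above; we verify each and make sure they are consistent with one another.

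\emph{HJB equation.} We start from the HJB equation satisfied by $V^*$ in the viscosity sense, recalled at the beginning of this subsection. At a point $p=\nabla V^*(x)$ (or any sub/superdifferential element $p$ used in the viscosity test), we compute the inner infimum explicitly. By definition of the convex conjugate,
\begin{align*}
\inf_{u\in\Rr^m}\left\{R(u)+\rho^{-1}p^\top B(x)u\right\} = -\sup_{u}\left\{(-\rho^{-1}B(x)^\top p)^\top u - R(u)\right\} = -R^*\big(-\rho^{-1}B(x)^\top p\big).
\end{align*}
Substituting this into the HJB equation gives the second line of the system. The Hamiltonian $H(x,p)=f(x)+\rho^{-1}p^\top a(x)-R^*(-\rho^{-1}B(x)^\top p)$ is continuous, and convex in $p$. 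Hence the viscosity notion is unchanged: the identity is pointwise at the level of test-function gradients.

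\emph{Optimal control.} The infimum is attained exactly where $-\rho^{-1}B^\top p\in\partial R(u)$. Since $R$ is strictly convex and differentiable for $q\in(1,2]$, this is equivalent to $u=\nabla R^*(-\rho^{-1}B^\top p)$. This yields the first line wherever $B^\top\nabla V^*$ exists. By \ref{hypo:alpha} and the discussion preceding Proposition~\ref{prop:bound}, that quantity is defined a.e., and $u^*$ is bounded by $C_u$. This is the main delicate point. What we actually use is the a.e. definedness of $u^*$ as a feedback giving optimal trajectories, which the paper takes from Proposition~\ref{prop:bound} and its proof. I would invoke that result rather than reprove it. I would also add a remark that, by Corollary~\ref{cor_constr}, one may restrict controls to $\cU$, so the feedback is bounded.

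\emph{Liouville equation.} The third line follows by applying Lemma~\ref{lemma1} with $g(x)=b(x,u^*(x))$. The only subtlety is that Lemma~\ref{lemma1} is stated for Lipschitz $g$, whereas $u^*$ is merely bounded measurable. To handle this, I would redo the lemma's computation directly, which needs only that the optimal trajectories $x_t$ are absolutely continuous with $\dot x_t=b(x_t,u^*(x_t))$ for a.e. $t$. For $\varphi\in\cC^1(\cX)$, integrate by parts in $t$:
\begin{align*}
\rho\int_0^\infty e^{-\rho t}\nabla\varphi(x_t)^\top\dot x_t\,\mathrm dt=-\varphi(x_0)+\rho\int_0^\infty\rho e^{-\rho t}\varphi(x_t)\,\mathrm dt.
\end{align*}
Integrating against $\mu_0$ then gives $\int\varphi\,\mathrm d\mu_0=\int\varphi\,\mathrm d\mu^*-\rho^{-1}\int\nabla\varphi^\top b(\cdot,u^*)\,\mathrm d\mu^*$. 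This is exactly the distributional form of the third line. The bound $\|u^*\|\le C_u$ together with \ref{hypo:regularity} ensures all integrals are finite.
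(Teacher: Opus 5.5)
Your proposal is correct and takes essentially the paper's route. The paper gives no separate proof of this proposition; it simply collects three facts: the Legendre-transform evaluation of the infimum in the HJB equation, the a.e.\ definition and bound of $u^*$ from \ref{hypo:alpha} and Proposition~\ref{prop:bound}, and Lemma~\ref{lemma1} applied along the optimal trajectories. Your direct rederivation of that lemma for a merely bounded feedback is a welcome bit of extra care. Note, however, that Proposition~\ref{prop:bound} only shows $u^*$ is defined for a.e.\ $x$, so you, like the paper, are additionally assuming that optimal trajectories solve $\dot x_t=b(x_t,u^*(x_t))$ for a.e.\ $t$.

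There are two cosmetic slips. With your sign convention, $H$ is concave, not convex, in $p$. Also, the right-hand side of your integration-by-parts display should begin with $-\rho\varphi(x_0)$; the weak identity you finally state is nevertheless correct.
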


\section{Primal and dual linear programming formulations} \label{sec:lp} In this section, we define a linear programming formulation of our optimal control problem on the torus with infinite horizon. While this formulation has been first proposed for finite-horizon problems~\cite{vinter1993convex,hernandez1996linear,lasserre2008nonlinear}, a discounted infinite-horizon formulation has been proposed by~\cite{gaitsgory2009linear}. This rich line of work proposes a \textit{weak formulation}\footnote{Vinter describes in~\cite[Section~1]{vinter1993convex} in which sense this formulation is ``weak''. It relies on \textit{relaxed arcs}, weakly associated with linear functionals, called \textit{generalized flows}, through integration against continuously differentiable test functions.} of optimal control. One of the challenges around this infinite-dimensional linear programming (LP) formulation is to find conditions under which it is equivalent to the original optimal control problem, along with strong duality results. It turns out that under reasonable convexity and compactness conditions, this equivalence indeed holds~\cite{fleming1989convex,vinter1993convex,lasserre2008nonlinear,gaitsgory2009linear}.

After defining the primal and dual LP formulations in our setting in Section~\ref{subsec:def}, we prove in Section~\ref{subsec:eq} this equivalence, by  adapting a similar result from~\cite{lasserre2008nonlinear} to the present case. The main arguments rely on generic duality results in infinite-dimensional linear programming~\cite{anderson1983review}, and convexity arguments proposed by~\cite{vinter1993convex}. %

\subsection{Definition of the primal and dual} \label{subsec:def}

Let $\cL$ be the differential operator
\begin{equation*}
    \cL:
    \left\{
    \begin{aligned}
        \; \cC^1(\cX)&  \rightarrow \cC(\cX \times \Rr^m)
        \\
        \varphi ~~ &\mapsto  \Bigl\{ (x, u) 
        \mapsto
        \varphi(x) - \frac{1}{\rho} \nabla \varphi(x)^\top b(x, u) \Bigr\}.
\end{aligned}
\right.
\end{equation*}
We define $\cL^*:\cM_c(\cX\times\Rr^m)\to \cC^1(\cX)^*$
as the dual of $\cL$, i.e., such that
$\langle  \cL \varphi, \nu \rangle = \langle \varphi , \cL^* \nu \rangle  $
for all $\nu \in \cM_c(\cX \times \Rr^m)$
and $\varphi \in \cC^1(\cX)$.

For $\nu\in\cM_+(\cX\times\Rr^m)$
with first marginal $\mu\in\cM_+(\cX)$,
the disintegration theorem states the existence
of $(\nu_x)_{x\in\cX}\subset\cP_c(\Rr^m)$
a measurable family of compactly supported 
probability measures, 
such that $\nu(A,B)=\int_A\nu_x(B)\mu(\mathrm dx)$
for any Borel sets $A\subset\cX$ and $B\subset\Rr^m$.
For simplicity, we write $\nu(\mathrm dx, \mathrm du)=\nu_x(\mathrm du)\nu(\mathrm dx)$.
Using this formulation, $\cL^*$ writes
\begin{equation*}
    \cL^*\nu
    =
    \mu +\rho^{-1}{\rm div}_x
    \left(\mu\int_{\Rr^m}b(x,u)\,\nu_x(\mathrm du)\right)
    \in\cC^1(\cX)^*,
\end{equation*}
where the divergence operator takes as an argument a
function that is not differentiable in general
and should be understood in the sense of distribution.

We are now in position to state our primal and dual problems:
\begin{align} 
\label{P}\tag{P}
    &\inf_{\nu \in \cM_+(\cX \times \Rr^m)} 
    \iint_{\cX\times\Rr^m}
    (f(x)+R(u))\,\nu(\mathrm dx, \mathrm du)
    \hspace*{0.3cm}
    \text{such that}
    \hspace*{0.3cm}
    \cL^* \nu = \mu_0 \, .
    \\
    \label{D}\tag{D}
    &\sup_{V \in \cC^1(\cX)}
    \int_{\cX}V(x)\,\mu_0(x)\, \mathrm dx
    \hspace*{0.3cm}
    \text{such that}
    \hspace*{0.3cm}
    \cL V \leq f + R \, .
\end{align}

These two problems are dual to each other.
The primal problem aims at finding occupation measures minimizing the expected cost, following the dynamics through Liouville's equation. The dual problem looks for a maximal subsolution  of the HJB equation. Note that similar LP formulations exist for Markov decision processes~\cite{nazareth1986linear}.

\begin{lemma}
    \label{lem:primal_on_proba}
    Under Assumption \ref{hypo:regularity} and \ref{hypo:alpha},
    the infimum of \eqref{P} can be taken on the set of
    probability measure supported on the graph
    of an $L^{\infty}(\cX;\Rr^m)$-function. 
    May minimizers exist, they belong to the same set.
\end{lemma}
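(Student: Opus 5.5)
Two observations drive the argument: testing the constraint against constants fixes the total mass, and averaging the control fibre by fibre (Jensen) can only lower the cost while leaving the constraint unchanged.

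\emph{Step 1 (mass).} Take $\nu$ feasible for \eqref{P} and test $\cL^*\nu=\mu_0$ against $\varphi\equiv 1\in\cC^1(\cX)$. Since $\nabla\varphi=0$, we have $\cL\varphi=1$, so
\begin{equation*}
\nu(\cX\times\Rr^m)=\langle \cL 1,\nu\rangle=\langle 1,\mu_0\rangle=1 .
\end{equation*}
Because $\nu\geq 0$, every feasible $\nu$ is a probability measure, as is its first marginal $\mu$.

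\emph{Step 2 (barycentric projection).} Disintegrate $\nu(\mathrm dx,\mathrm du)=\nu_x(\mathrm du)\mu(\mathrm dx)$. Each $\nu_x$ is compactly supported, so we may define
\begin{equation*}
\bar u(x)=\int_{\Rr^m}u\,\nu_x(\mathrm du), \qquad \bar\nu(\mathrm dx,\mathrm du)=\delta_{\bar u(x)}(\mathrm du)\,\mu(\mathrm dx).
\end{equation*}
The key point is that $b$ is affine in $u$, so
\begin{equation*}
\int b(x,u)\,\nu_x(\mathrm du)=a(x)+B(x)\bar u(x)=b(x,\bar u(x)).
\end{equation*}
By the formula for $\cL^*$, this gives $\cL^*\bar\nu=\cL^*\nu=\mu_0$, so $\bar\nu$ is feasible. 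Since $R$ is convex, Jensen's inequality gives $R(\bar u(x))\leq\int R\,\mathrm d\nu_x$, and integrating against $\mu$ yields $\mathrm{cost}(\bar\nu)\leq\mathrm{cost}(\nu)$. Moreover, $R$ is strictly convex, so the inequality is strict unless $\nu_x=\delta_{\bar u(x)}$ for $\mu$-a.e. $x$. Consequently any minimizer is already supported on the graph of $\bar u$, which gives the second sentence of the lemma once boundedness (Step 3) is in place.

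\emph{Step 3 (boundedness of $\bar u$).} This is the main obstacle: $\bar u$ is merely $\mu$-measurable and possibly unbounded, because $\nu$ being compactly supported only bounds each fibre, not uniformly in a way we control. (Note that compact support of $\nu$ itself already forces $\bar u$ to be bounded, since $\bar u(x)$ lies in the convex hull of the projection of $\mathrm{supp}\,\nu$. Whether this suffices depends on the intended reading; the meaningful target is a bound uniform in $\nu$, such as $C_u$.)

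To get such a bound, I would replace $\bar u$ by a control whose magnitude is bounded using \ref{hypo:alpha}. Write $B\bar u=B(\bar u+\alpha)-a$ and decompose $\bar u+\alpha=w+z$, where $w$ is the orthogonal projection of $\bar u+\alpha$ onto $(\ker B(x))^\perp$ and $z\in\ker B(x)$. Now set $\tilde u=w-\alpha_\perp$, where $\alpha_\perp$ denotes the projection of $\alpha$ onto $(\ker B(x))^\perp$. Since $B\tilde u=Bw-B\alpha=B\bar u$, the dynamics, and hence feasibility, are unchanged. Projecting $\bar u$ onto $\ker B(x)^\perp$ does not increase its norm, and $R$ is radial and increasing, so the cost does not increase either; but this does not yet bound the norm.

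The genuine bound must come from optimality. Near-optimal $\nu$ have cost at most $\|f\|_\infty+\varepsilon$, because testing $u\equiv 0$ in \eqref{P} reproduces the bound of Proposition~\ref{prop:V*_bounded}. I would then truncate: where $\|\bar u\|>C$, replace $\bar u$ by a feasible modification. This requires re-solving the Liouville equation locally, which is delicate. The cleaner route I would try first is to invoke Corollary~\ref{cor_constr}, which restricts the problem to $\cU$, together with the equivalence between \eqref{P} and \eqref{eq:OCP} whose proof presumably follows in Section~\ref{subsec:eq}. Failing that, I would settle for the fact that each $\bar u$ is bounded by the compact support of $\nu$, which already yields an $L^\infty$ function and so suffices to prove the lemma as stated.
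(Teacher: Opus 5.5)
Your Steps 1 and 2 are exactly the paper's proof. You test $\cL^*\nu=\mu_0$ against $\mathds{1}_{\cX}$ to get unit mass, then replace $\nu$ by $\delta_{\bar u(x)}(\mathrm du)\,\mu(\mathrm dx)$, using that $b$ is affine in $u$ for feasibility and Jensen plus strict convexity of $R$ for the cost comparison.

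For boundedness, the paper uses precisely the observation in your parenthetical remark: the new measure is supported in the convex hull of $\mathrm{supp}\,\nu$, which is compact because $\nu\in\cM_c(\cX\times\Rr^m)$. The rest of your Step 3 can simply be dropped. That includes the claim that compact support does not bound the fibres uniformly, which is false. It also includes the projection, truncation and Corollary~\ref{cor_constr} detours toward a $\nu$-independent bound that the lemma does not assert.
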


In the rest of this paper, our objective will be to provide formulations of such optimization problems which are compatible with stochastic optimization.%

\subsection{Equivalence between the LP formulations and the OCP} \label{subsec:eq} In this section, we state a triple  equivalence between the primal and dual formulations (proving \textit{strong duality}), and the original control problem (stating that the \textit{weak formulation} is exact). This result is an adaptation of a similar theorem proposed by~\cite{lasserre2008nonlinear} to our infinite-horizon, discounted problem on the torus, with a non-compact control set%
, and specialized to the control-affine setting. Note that a similar result is provided for the discounted setting by~\cite{gaitsgory2009linear}, yet with different state and control sets.

\begin{theorem} \label{thm_duality}
      Under Assumptions \ref{hypo:regularity} and \ref{hypo:alpha},
      the following statements hold:
    \begin{enumerate}
        \item \textnormal{(P)} and \textnormal{(D)}  are feasible, \textnormal{(D)}  is the Lagrange dual of \textnormal{(P)}, and \begin{align}
            \textnormal{val}\eqref{P}  
            \geq
            \textnormal{val}\eqref{D} 
            \geq 
            \int_\cX V^*(x) \mu_0(x) \, \mathrm dx \, .
        \end{align}
        \item We have strong duality: $\textnormal{val}\textnormal{(P)} = \textnormal{val}\textnormal{(D)}$.%
        \item Their common value is \begin{align}
            \textnormal{val}\textnormal{(P)}=\textnormal{val}(\textnormal{D})=\int_\cX V^*(x) \mu_0(x) \, \mathrm dx \, .
        \end{align} %
    \end{enumerate}
\end{theorem}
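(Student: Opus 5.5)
The plan is to establish the chain $\int_\cX V^*\mu_0 \le \textnormal{val}\eqref{D} \le \textnormal{val}\eqref{P} \le \int_\cX V^*\mu_0$, which yields all three items at once.

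\textbf{Feasibility and weak duality.} For (D), $V\equiv 0$ is feasible because $\cL 0 = 0 \le f+R$. For (P), take the occupation measure of the uncontrolled system $u\equiv 0$, that is, the $\rho$-discounted occupation measure $\mu_0^{(0)}$ of $\dot x = a(x)$, and set $\nu = \mu_0^{(0)}\otimes\delta_0$. By Lemma~\ref{lemma1}, $\cL^*\nu=\mu_0$, and its cost is at most $\|f\|_\infty$. Forming the Lagrangian $\langle f+R,\nu\rangle + \langle V,\mu_0-\cL^*\nu\rangle$ and minimizing over $\nu\ge 0$ shows that (D) is the Lagrange dual of (P). Weak duality $\textnormal{val}\eqref{P}\ge\textnormal{val}\eqref{D}$ then follows from $\langle V,\mu_0\rangle=\langle \cL V,\nu\rangle\le\langle f+R,\nu\rangle$ for any feasible pair.

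\textbf{Lower bound $\textnormal{val}\eqref{D}\ge\int V^*\mu_0$.} The task is to produce $\cC^1$ subsolutions arbitrarily close to $V^*$. First, $V^*$ is Lipschitz: with controls restricted to the compact set $\cU$ (Corollary~\ref{cor_constr}) and $\rho$ large relative to the Lipschitz constant of $b$, this is standard. If $\rho$ is not large, $V^*$ is only Hölder, and the same scheme is run with a Hölder modulus instead. Next, mollify on the torus, $V_\e = V^*\ast\eta_\e$. Since $V^*$ is a viscosity subsolution of HJB, a Lipschitz subsolution of a convex Hamiltonian, it satisfies the inequality a.e. Convexity of $p\mapsto \sup_u\{-\rho^{-1}p^\top b(x,u)-R(u)\}$ together with the Lipschitz dependence in $x$ of $a$, $B$ and $f$ (Assumption~\ref{hypo:regularity}) then gives, via Jensen,
$\cL V_\e \le f+R+\omega(\e)$ on $\cX\times\cU$, with $\omega(\e)\to0$.

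Two further points need care. First, (D) requires the inequality for all $u\in\Rr^m$, not just $u\in\cU$. I would handle this by working with the restricted problem, whose value is unchanged, or by using the growth of $R$, since $\nabla V_\e$ is bounded uniformly in $\e$. Second, the constant defect $\omega(\e)$ is absorbed by subtracting it: $V_\e-\omega(\e)$ is feasible, because $\cL$ applied to a constant $c$ returns $c$. Letting $\e\to0$ gives the bound. I expect this step to be the main obstacle, specifically the commutation of mollification with the $x$-dependent Hamiltonian on the noncompact control set.

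\textbf{Upper bound $\textnormal{val}\eqref{P}\le\int V^*\mu_0$.} Use near-optimal open-loop controls $u^{x_0}$ for each $x_0$, chosen measurably via a measurable selection argument, or alternatively via piecewise-constant $\e$-optimal controls on a finite partition, using continuity of $V^*$. Then define
$\nu(A\times C)=\int\rho\int_0^\infty e^{-\rho t}\indic_{A\times C}(x_t,u_t)\,\mathrm dt\,\mathrm d\mu_0(x_0)$.
Differentiating $e^{-\rho t}\varphi(x_t)$ and integrating shows $\cL^*\nu=\mu_0$, following the computation of Lemma~\ref{lemma1}. Restricting controls to $\cU$ makes $\nu$ compactly supported, and its cost is at most $\int V^*\mu_0+\e$. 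Combining the three inequalities gives strong duality (item~2) and the common value (item~3), which also proves item~1.
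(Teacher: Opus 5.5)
Your feasibility and weak-duality steps match the paper. Your upper bound is a valid and genuinely different route. You build a feasible $\nu$ directly from $\e$-optimal $\cU$-valued controls, chosen piecewise in $x_0$; this also uses that the cost of a fixed control is continuous in $x_0$, which follows from Gronwall plus discounting. That gives $\textnormal{val}\eqref{P}\le\int_\cX V^*\mu_0$ without the paper's detour through Anderson--Nash strong duality for the compact-control LP and a viscosity comparison principle bounding val(Dc).

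The gap is in the lower bound $\textnormal{val}\eqref{D}\ge\int_\cX V^*\mu_0$. Your mollification/Jensen argument requires $V^*$ to be Lipschitz. Under \ref{hypo:regularity}--\ref{hypo:alpha} this is only available when $\rho$ exceeds the Lipschitz constant of $b$ on $\cX\times\cU$, whereas the theorem is claimed for every $\rho>0$. This is not a technicality. Take $d=m=1$, $\alpha\equiv1$, $B(x)=x-\tfrac12$ near $\tfrac12$, and $0\le f\le1$ with $f(x)=1-|x-\tfrac12|$ near $\tfrac12$. The state is frozen at $\tfrac12$, so $V^*(\tfrac12)=1$, while the uncontrolled escape gives $V^*(x)\le1-c|x-\tfrac12|^{\rho}$ for $x$ near $\tfrac12$. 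This is a H\"older cusp when $\rho<1$; the paper only proves $V^*$ Lipschitz under the stronger \ref{hypo:C2}.

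For a merely H\"older $V^*$, your step fails for three reasons:
\begin{itemize}
\item There is no a.e.\ gradient, so ``satisfies the inequality a.e.'' and the pointwise Jensen step have no content.
\item The Hamiltonian is coercive only in $B^\top p$, so subsolutions are not automatically Lipschitz.
\item The $x$-shift error $H(x,p)-H(x-y,p)$ grows with $\|p\|$, so it cannot be made uniformly small.
\end{itemize}
Running ``the same scheme with a H\"older modulus'' therefore does not work.

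What is missing is a family of \emph{Lipschitz} subsolutions of the full HJB (supremum over all $u\in\Rr^m$) converging uniformly to $V^*$. The paper obtains it from the regularized problem with an extra control $\eta\tilde u\in\Rr^d$ penalized by $\|\tilde u\|^2/2$. Its value $V_\eta$ satisfies $V_\eta+H(x,\nabla V_\eta)=-\tfrac{\eta^2}{2}\|\nabla V_\eta\|^2\le0$. The \ref{hypo:alpha}-based estimate from the proof of Proposition~\ref{prop:bound} bounds $\tfrac{\eta^2}{2}\|\nabla V_\eta\|^2$ independently of $\eta$, so $\mathrm{Lip}(V_\eta)=O(\eta^{-1})$. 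Stability gives $V_\eta\to V^*$ uniformly. One then mollifies at a scale $\e(\eta)$ chosen so that the $x$-shift error $C(\|\nabla V_\eta\|_\infty)\,\e(\eta)$ is at most $\eta$, and subtracts $\eta$.

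Since $H$ already takes the supremum over all of $\Rr^m$, this also settles your unbounded-control concern. Your first remedy there is circular: a priori only $\textnormal{val(Dc)}\ge\textnormal{val}\eqref{D}$ holds, so a lower bound on the restricted dual says nothing about \eqref{D}. In the regime where $V^*$ is Lipschitz, your direct mollification works, using that $V^*$ is a viscosity subsolution of the full HJB.
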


\begin{proof}[Proof sketch]
In order to apply the similar result of~\cite{lasserre2008nonlinear} to our setting, several adaptations must be made. Crucially, we need to consider the control-constrained problem, as compactness of the control-set is required to prove strong duality using an argument  from~\cite{anderson1983review}. %
The main argument supporting the equivalence between the weak and strong formulations of the OCP in the constrained case is a comparison principle provided by~\cite{bardi1997optimal}. The conclusion follows from Proposition~\ref{prop:optimality}. The sequence of proved claims is summarized in Figure~\ref{fig:sketchofproof}.
\end{proof}

\begin{remark} \label{kkt}
    The Karush-Kuhn-Tucker (KKT) optimality conditions can be derived for the linear program (P). Under the conditions of Theorem~\ref{thm_duality}, $V^*$ is the optimal dual variable associated to the equality constraint $\cL^* \nu^* = \mu_0$. Let $\chi^* : \cX \times \Rr^m \rightarrow \Rr_+$ the optimal dual variable associated to the inequality constraint $\nu^* \geq 0$. The stationarity condition is then:
    \begin{align} \label{eqn:stat}
      \forall (x, u) \in \cX \times \Rr^m, \quad  - V^*(x) + f(x) + R(u) +\frac{1}{\rho} \nabla {V^*(x)}^\top b(x,u) = \chi^*(x, u) \, .
    \end{align}
    Hence $\chi^*$  represents the slackness in the HJB inequality.
    In addition, the complementary slackness condition reads:
    \begin{align*}
        \int_\cX \int_{\Rr^m} \chi^*(x, u) \nu^*(x,u) ~ \mathrm du ~ \mathrm d x = 0 \, .
    \end{align*}
    Since $\chi^*$ and $\nu^*$ are both non-negative, this means that $\forall (x, u), \chi^*(x,u) \nu^*(x, u)=0$. Because of Lemma~\ref{lem:primal_on_proba}, $\nu^*$ only puts mass on a unique $u(x)$ for each $x$, so that we must have $\chi^*(x,u(x))=0$ at this point. Observing the stationary condition~\eqref{eqn:stat} and by strong convexity of $R$, this means that $u(x)$ is equal to the optimal controller $u^*(x)$. Therefore, by combining the KKT optimality conditions, we can recover all three optimality equations from Proposition~\ref{prop:optimality}. %
\end{remark}

\section{The dual problem} \label{sec:dual} Let us now consider the dual weak formulation~\eqref{D}. From Theorem~\ref{thm_duality}, we know that solving~\eqref{D} is equivalent to solving the OCP. Yet,  \eqref{D} is a linear program in infinite dimension, and cannot, in general, be solved exactly. In this section, we explore conditions under which stochastic approximation can be used to obtain an approximate solution with a convergence guarantee, and present the potential limitations of such an approach.

\subsection{Exact penalty formulation of the dual problem}

\eqref{D} has an infinite-dimensional constraint, which is an obstacle to a direct application of SGD. Several approaches are possible to get rid of such constraints. Entropy-regularized or KL-constrained policy-search methods, such as relative entropy policy search~\cite{peters2010relative}, use a penalization approach, but usually require a decreasing regularization parameter. Here, we consider an exact penalty approach, which is valid under a uniform boundedness assumption on the optimal occupation measure~$\mu^*$.

\begin{enumerate}[label={\bf A3}]
    \item
    \label{hypo:mustarbounded} The optimal occupation measure $\mu^*$ is absolutely continuous with respect to the Lebesgue measure and there exists $M >0$ such that for all $x \in \cX$, $\mu^*(x) \leq M$.
\end{enumerate}

Determining conditions on the dynamics, cost and discount factor under which Assumption~\ref{hypo:mustarbounded} holds is an interesting problem of separate interest, which we will discuss in Section~\ref{sec:mustar}. For now, let $M > 0$ such that \ref{hypo:mustarbounded} holds. We define the following unconstrained problem:
\begin{align*} \tag{$\text{D}(M)$} \label{DM} \sup_{V \in \cC^1(\cX)} \ \   \int_{\cX} V(x) \mu_0(x) \, \mathrm dx & \\  - \int_{\cX} M \left[V(x) - f(x) - \frac{1}{\rho}\nabla V(x)^\top a(x) \right. & \left. + R^* \left(-\frac{1}{\rho} B(x)^\top \nabla V(x) \right) \right]_+ \, \mathrm dx \, ,
\end{align*}
where $[y]_+:=\max(y, 0)$ denotes the positive part of $y$. To highlight the link between \eqref{D} and \eqref{DM}, we can rewrite \eqref{D} as:
\begin{align*}
\sup_{V \in \cC^1(\cX)} \ \  \inf_{\mu : \cX \to \Rr_+} \ \  \int_{\cX} V(x) \mu_0(x) \, \mathrm  dx + \int_{\cX} \mu(x) &\Big[  -  V(x) +  f(x) + \frac{1}{\rho} \nabla V(x) ^\top a(x) \Big. \\
& ~~~ \Big.  - R^*\left(-\frac{1}{\rho} B(x)^\top \nabla V(x) \right) \Big] \, \mathrm d x \, .
\end{align*}
Since strong duality holds, then, under Assumption~\ref{hypo:mustarbounded}, we can add the  extra constraint  $\forall x \in \cX,\,  \mu (x) \leq M$ in the problem above without changing  its value. The optimal solution of the inner problem in $\mu$ is to take $\mu(x)=M$ where the constraint is negative, and $\mu(x)=0$ elsewhere, \textit{i.e.},
\begin{align*}
    &\inf_{\mu:\cX \rightarrow [0, M]} \ \  \int_\cX \mu(x) \left( -  V(x) +  f(x)  + \frac{1}{\rho} \nabla V(x) ^\top a(x)  - R^*\left(-\rho^{-1} B(x)^\top \nabla V(x) \right) \right) \mathrm dx \\
    &\qquad\quad  = - \int_{\cX} M \left[   V(x) -  f(x)  - \frac{1}{\rho} \nabla V(x) ^\top a(x)  + R^*\left(-\rho^{-1} B(x)^\top \nabla V(x) \right)  \right]_+ \mathrm dx \, .
\end{align*}
The proposition below follows from taking the supremum on $V$ of this expression.
\begin{proposition} \label{prop:DDM}
    Under Assumptions \ref{hypo:regularity}-\ref{hypo:alpha}-\ref{hypo:mustarbounded}, \eqref{D} is equivalent to~\eqref{DM}.
\end{proposition}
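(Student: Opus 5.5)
We will show that \eqref{D} and \eqref{DM} have the same optimal value, $\int_\cX V^*\mu_0$. We then show that any optimizer of one is an optimizer of the other whenever both are attained. The natural route is to sandwich $\textnormal{val}\eqref{DM}$ between $\textnormal{val}\eqref{D}$ and $\textnormal{val}\eqref{P}$, and close the sandwich with Theorem~\ref{thm_duality}.

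\emph{Step 1 (rewriting the constraint).} For $V\in\cC^1(\cX)$, set
\begin{align*}
h_V(x) := V(x) - f(x) - \rho^{-1}\nabla V(x)^\top a(x) + R^*\bigl(-\rho^{-1}B(x)^\top\nabla V(x)\bigr).
\end{align*}
Since $\sup_{u}\{-R(u) - \rho^{-1}\nabla V^\top B u\} = R^*(-\rho^{-1}B^\top\nabla V)$, we have $h_V(x) = \sup_{u\in\Rr^m}\bigl(\cL V(x,u) - f(x) - R(u)\bigr)$. Hence $V$ is feasible for \eqref{D} if and only if $h_V\le 0$ on $\cX$.

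\emph{Step 2 (lower bound).} If $h_V\le0$, the penalty vanishes, so the objective of \eqref{DM} equals that of \eqref{D}. Therefore $\textnormal{val}\eqref{DM}\ge\textnormal{val}\eqref{D}$.

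\emph{Step 3 (upper bound, the key step).} Fix an arbitrary $V\in\cC^1(\cX)$ and test it against the optimal primal measure $\nu^*$. By Lemma~\ref{lem:primal_on_proba} and Remark~\ref{kkt}, $\nu^*$ is supported on the graph of $u^*$ and has first marginal $\mu^*$. By Assumption~\ref{hypo:mustarbounded}, $\mu^*\le M$.

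Since $\cL^*\nu^*=\mu_0$,
\begin{align*}
\int V\mu_0 = \int \cL V\,\mathrm d\nu^* = \int \bigl(\cL V(x,u^*(x)) - f - R(u^*)\bigr)\mu^*(\mathrm dx) + \int (f+R(u^*))\,\mathrm d\mu^* .
\end{align*}
The first integrand is at most $h_V(x)\le [h_V(x)]_+$. Since $\mu^*\le M$, the first integral is bounded by $\int M[h_V]_+\,\mathrm dx$. The second integral equals $\textnormal{val}\eqref{P}$, because $\nu^*$ is optimal; if only the infimum is known, we would instead need an approximate minimizer with density bounded by $M$, and A3 is phrased for $\mu^*$, so we rely on existence of $\nu^*$, as implicit in A3.

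Rearranging gives that the \eqref{DM} objective at $V$ is at most $\textnormal{val}\eqref{P}$. Theorem~\ref{thm_duality} identifies $\textnormal{val}\eqref{P}$ with $\textnormal{val}\eqref{D}$, so equality of values follows.

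\emph{Step 4 (optimizers).} An optimizer of \eqref{D} is optimal for \eqref{DM} by Step 2. For the converse, the right notion of equivalence is equality of values, together with \eqref{D}-optimizers being \eqref{DM}-optimizers. If one wants the converse as well, one takes $M$ strictly larger than $\|\mu^*\|_\infty$: then the slack $(M-\mu^*)[h_V]_+$ forces $h_V\le0$ a.e., and hence everywhere by continuity, for any maximizer of \eqref{DM}.

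The main obstacle is Step 3: justifying that $\nu^*$ exists and that its marginal is the absolutely continuous $\mu^*$ of A3. I would lean on Remark~\ref{kkt} and Lemma~\ref{lem:primal_on_proba} for this. Also, since $u^*$ is only in $L^\infty$ while $\cL V$ is evaluated pointwise, the pairing $\langle\cL V,\nu^*\rangle$ needs the measurability and boundedness of $u^*$ from Proposition~\ref{prop:bound}.
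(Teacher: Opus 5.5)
Your proposal is correct and follows essentially the paper's route. The paper rewrites \eqref{D} as a sup--inf over $\mu\ge 0$ and asserts that, by strong duality and Assumption~\ref{hypo:mustarbounded}, one may add the constraint $\mu\le M$ before computing the inner infimum in closed form; your Step~3 inequality $\int_\cX V\mu_0 \le \int_\cX h_V\,\mu^*\,\mathrm dx + \textnormal{val}\eqref{P}$ is exactly the unpacked justification of that step (and, since $\cL V(x,u)-f(x)-R(u)\le h_V(x)$ for every $u$, you do not need $\nu^*$ to be graph-supported: any optimal $\nu^*$ whose first marginal has density at most $M$ suffices).
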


\eqref{DM} is now an unconstrained problem appearing as  maximizing the expectation of a concave function of $V$, an infinite dimensional variable. We will introduce in Section~\ref{sec:sgddual} a non-parametric representation of $V$ in a reproducing kernel Hilbert space (RKHS)~$\cH$~\cite{shawe2004kernel} that will allow for a practical finite-dimensional implementation of a converging SGD algorithm to solve~\eqref{DM}. 
Therefore we start by proposing in Section~\ref{sec:auxsgd} a convergence result for SGD adapted to this setting, where the variable cannot be restricted to a bounded region. Since it can be of independent interest, we state it with generic notations.

\subsection{Auxiliary result on single pass SGD without projection} \label{sec:auxsgd}

We consider minimizing $F : \cH \rightarrow \Rr$ where $F$ is a convex function defined on a Hilbert space~$\cH$. We assume that, for $\theta \in \cH$:
\begin{align*}
    F(\theta) = \Ee_{z} \left[ \rho_1(g_1(\theta, z)) + \rho_2(g_2(\theta, z))\right] \, ,
\end{align*}
where for each $z$, $\theta \mapsto g_1(\theta, z)$ is  convex and $L_1$-smooth, $\theta \mapsto g_2(\theta, z)$ is convex and $L_2$-smooth, $\rho_1 : \Rr \rightarrow \Rr$ and $\rho_2 : \Rr \rightarrow \Rr$ are convex non-decreasing functions, with derivatives respectively in $[0, \alpha_1]$ and $[0, \alpha_2]$. Note that $F$ is then a convex function. Let $z_1, \dots , z_n$ i.i.d.~realizations of the random variable $z$, and $\gamma>0$. For $i \in \{1, 2 \}$, let $g_i' \in \cH$ denote the partial derivative of $g_i$ with respect to $\theta$. Starting from some $\theta_0 \in \cH$, we consider the single pass SGD recursion:
\begin{align} \label{sgd}
    \theta_n = \theta_{n-1} - \gamma \left[ \rho_1'(g_1(\theta_{n-1}, z_n)) g_1'(\theta_{n-1}, z_n) + \rho_2'(g_2(\theta_{n-1}, z_n)) g_2'(\theta_{n-1}, z_n) \right] \, .
\end{align} 

Various analyses of the convergence of SGD exist, yet in slightly different settings. Averaged SGD is known to provide an optimal convergence rate of $O(1/\sqrt{n})$ for optimizing Lipschitz continuous (non-smooth) functions (see \cite[Chapter~5]{nemirovskij1983problem} and \cite{agarwal2009information}), yet on bounded domains. In our case of interest, we do not assume that $\| \theta^*\|_\cH$ is finite, for $\theta^*$ minimizing $F$ on $\cH$. One possible option would be to apply this analysis to   projected SGD on a compact set of increasing radius. However, we can provide a convergence result \textit{without projection} by an approach similar to those proposed for kernel regression in~\cite{DieBac2016} and~\cite[Chapter~7]{bach2024learning}. In particular, we reuse tools developed in~\cite{schmidt2011convergence} for the distinct context of SGD with inexact gradients.  %

\begin{proposition} \label{prop:sgd} Consider the recursion \eqref{sgd} with $\gamma \leq \min \{ \frac{1}{4 \alpha_1 L_1}, \frac{1}{4 \alpha_2 L_2} \}$. Let, for $n \geq 1$, $\bar \theta_n = \frac{1}{n} \sum_{k=0}^{n-1} \theta_k$. Then we have:
\begin{align*}
     \Ee [F(\bar \theta_n)] - \inf F   \leq \inf_{\theta \in  \cH} \left\{ [ F(\theta) - \inf F]  + \left( \frac{1}{\gamma n} + 32 \gamma^2 n \alpha L + 32 \gamma   \right) \alpha^2  L^2\| \theta-\theta_0\|^2 \right\} 
 \\
  \qquad\qquad + \left( 32 \gamma^2 n \alpha^3 L + 32 \gamma \alpha^2 \right) \sigma^2(\theta_0) \, ,
\end{align*}
where $\alpha = \max \{ \alpha_1, \alpha_2 \}$, $L = \max \{L_1, L_2 \}$ and $\sigma^2(\theta_0)=\max_{i \in \{1,2\}}\Ee_z[ \| g'_i(\theta_0, z)\|^2]$. 
\end{proposition}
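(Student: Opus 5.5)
We compare the iterates with an arbitrary fixed $\theta \in \cH$ rather than with a minimizer, since $\inf F$ need not be attained in $\cH$. We then take the infimum over $\theta$ at the end. The argument has three steps: a one-step expansion of $\|\theta_n-\theta\|^2$, a bound on the second moment of the stochastic gradient, and a control of how far the iterates drift from $\theta_0$.

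\textbf{One-step recursion.} Write $G_n = \rho_1'(g_1(\theta_{n-1},z_n))g_1'(\theta_{n-1},z_n) + \rho_2'(g_2(\theta_{n-1},z_n))g_2'(\theta_{n-1},z_n)$. This is an unbiased stochastic subgradient of $F$ at $\theta_{n-1}$, because the composition of a convex nondecreasing $\rho_i$ with a convex $g_i$ is convex. Expanding gives
\[
\|\theta_n-\theta\|^2=\|\theta_{n-1}-\theta\|^2-2\gamma\langle G_n,\theta_{n-1}-\theta\rangle+\gamma^2\|G_n\|^2 .
\]
Convexity of $\theta\mapsto\rho_i(g_i(\theta,z))$ gives $\Ee\langle G_n,\theta_{n-1}-\theta\rangle\ge \Ee[F(\theta_{n-1})]-F(\theta)$. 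Summing over $n$, dividing by $2\gamma n$ and using Jensen for $\bar\theta_n$, we get
\[
\Ee F(\bar\theta_n)-F(\theta)\le \frac{\|\theta_0-\theta\|^2}{2\gamma n}+\frac{\gamma}{2n}\sum_{k}\Ee\|G_k\|^2 .
\]

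\textbf{Bounding $\Ee\|G_k\|^2$.} Since $\rho_i'\in[0,\alpha_i]$, we have $\|G_k\|^2\le 2\alpha^2\sum_i\|g_i'(\theta_{k-1},z_k)\|^2$. By $L$-smoothness,
\[
\|g_i'(\theta_{k-1},z)\|\le \|g_i'(\theta_0,z)\|+L\|\theta_{k-1}-\theta_0\| ,
\]
so
\[
\Ee\|G_k\|^2\lesssim \alpha^2\sigma^2(\theta_0)+\alpha^2L^2\,\Ee\|\theta_{k-1}-\theta_0\|^2 .
\]
This is where the unboundedness of the domain enters. Note also that, for a single convex $L$-smooth $g$ composed with $\rho$, co-coercivity of $g_i'$ (in the form $\|g'(\theta)-g'(\theta_0)\|^2\le 2L(g(\theta)-g(\theta_0)-\langle g'(\theta_0),\theta-\theta_0\rangle)$) would let us absorb part of $\gamma^2\|G_n\|^2$ into the descent term. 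This is what the condition $\gamma\le 1/(4\alpha_iL_i)$ is designed for.

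\textbf{Controlling the drift (main obstacle).} The key difficulty is bounding $\Ee\|\theta_{k}-\theta_0\|^2$ without projection. I would follow the inexact-gradient technique of \cite{schmidt2011convergence}. Apply the one-step recursion with the comparison point $\theta$. Bound the function-value gap below by $-(F(\theta)-\inf F)$ and the noise by the previous step's bound. This yields an inequality of the form
\[
u_k^2\le u_0^2+2\gamma k\,[F(\theta)-\inf F]+c\gamma^2\sum_{j<k}\big(\alpha^2\sigma^2(\theta_0)+\alpha^2L^2 u_j^2\big),\qquad u_k^2=\Ee\|\theta_k-\theta\|^2 ,
\]
and then $\|\theta_k-\theta_0\|\le u_k+\|\theta-\theta_0\|$.

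The plan is to use the step-size condition so that the factor $\gamma\alpha L$ in front of $u_j^2$ is absorbed, keeping $u_k^2$ of order $\|\theta-\theta_0\|^2+\gamma k(F(\theta)-\inf F)+\gamma^2k\alpha^2\sigma^2(\theta_0)$. The tricky part is avoiding exponential growth in $k$. For this I would split $G_n$ into its value at $\theta_0$ plus a Lipschitz remainder and use co-coercivity on the remainder, which makes the $L^2u_j^2$ terms nonpositive up to $\gamma\alpha L\le1/4$ multiples.

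Plugging this drift bound into the second-moment bound and then into the averaged inequality produces the terms $\frac{1}{\gamma n}\|\theta-\theta_0\|^2$, $\gamma^2 n\alpha^3L^3\|\theta-\theta_0\|^2$, $\gamma\alpha^2L^2\|\theta-\theta_0\|^2$, and the $\sigma^2(\theta_0)$ terms. The $F(\theta)-\inf F$ contributions are collected into the first term, with constants (the $32$'s) coming from repeated $(a+b)^2\le2a^2+2b^2$. Taking the infimum over $\theta$ concludes.
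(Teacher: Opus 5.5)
There is a genuine gap. It sits at the step you yourself flag as the main obstacle, and it also affects the overall skeleton.

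\textbf{Where your argument breaks.} You keep the averaged inequality $\Ee F(\bar\theta_n)-F(\theta)\le\frac{\|\theta_0-\theta\|^2}{2\gamma n}+\frac{\gamma}{2n}\sum_k\Ee\|G_k\|^2$, in which the gradient second moment is taken at the iterates. You then need a drift bound, and it is never actually obtained.
\begin{itemize}
\item The recursion $u_k^2\le u_{k-1}^2+2\gamma(F(\theta)-\inf F)+\gamma^2\Ee\|G_k\|^2$, with $\Ee\|G_k\|^2\lesssim\alpha^2\sigma^2(\theta_0)+\alpha^2L^2(u_{k-1}^2+\|\theta-\theta_0\|^2)$, has a per-step factor $1+c\gamma^2\alpha^2L^2$. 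The condition $\gamma\alpha L\le1/4$ does not remove it, so you get exponential growth in $k$.
\item Co-coercivity anchored at $\theta_0$ bounds $\|g_i'(\theta_{k-1})-g_i'(\theta_0)\|^2$ by $L_i\langle g_i'(\theta_{k-1})-g_i'(\theta_0),\theta_{k-1}-\theta_0\rangle$. This does not match the descent term $\langle G_k,\theta_{k-1}-\theta\rangle$ of a recursion centred at $\theta$, so it cannot be absorbed as you describe.
\item Even granting your claimed bound $u_k^2\lesssim\|\theta-\theta_0\|^2+\gamma k(F(\theta)-\inf F)+\gamma^2k\alpha^2\sigma^2(\theta_0)$, inserting it into $\frac{\gamma}{2n}\sum_k\alpha^2L^2\Ee\|\theta_{k-1}-\theta_0\|^2$ produces a term of order $\gamma^2\alpha^2L^2n\,(F(\theta)-\inf F)$. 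So $F(\theta)-\inf F$ ends up with coefficient $1+O(\gamma^2\alpha^2L^2n)$, not $1$, and it cannot be ``collected into the first term''.
\item With a drift bound linear in $k$ (the kind the argument below gives), the same sum is of order $\gamma^3n^2$ instead of $\gamma^2n$. That is too weak even for $\gamma\propto n^{-2/3}$ in Theorem~\ref{thm:dual}.
\end{itemize}
So co-coercivity has to enter the main averaged inequality itself, not only the drift estimate.

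\textbf{What the paper does instead.} It uses two moves.

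First, it writes $\gamma^2\|G_n\|^2\le\sum_i\rho_i'(g_i(\theta_{n-1},z_n))\,2\gamma^2\alpha_i\|g_i'(\theta_{n-1},z_n)\|^2$, so each term sits under the same weight $\rho_i'$ as the corresponding descent term. It then applies co-coercivity anchored at the comparison point $\theta$:
\[
2\gamma\alpha_i\|g_i'(\theta_{n-1},z_n)\|^2\le4\gamma\alpha_i\|g_i'(\theta,z_n)\|^2+\beta\langle g_i'(\theta_{n-1},z_n),\theta_{n-1}-\theta\rangle+\beta\|g_i'(\theta,z_n)\|\,\|\theta_{n-1}-\theta\|,
\]
with $\beta=4\gamma\max_i\alpha_iL_i\le1$. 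The noise is now evaluated at the deterministic point $\theta$. A fraction $\beta$ of the descent term absorbs the rest, and the iterates enter only linearly through $u_{n-1}=(\Ee\|\theta_{n-1}-\theta\|^2)^{1/2}$.

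Second, the drift is controlled by monotonicity of $F'$ rather than by the function gap. Writing $\sigma_i^2(\theta)=\Ee_z\|g_i'(\theta,z)\|^2$, one has $\langle F'(\theta_{n-1}),\theta-\theta_{n-1}\rangle\le\langle F'(\theta),\theta-\theta_{n-1}\rangle\le(\alpha_1\sigma_1(\theta)+\alpha_2\sigma_2(\theta))u_{n-1}$. This gives $u_n^2\le u_{n-1}^2+a(u_{n-1}+a)$ with $a=2\gamma\sum_i\alpha_i\sigma_i(\theta)$, hence $u_n\le u_0+an$ by induction. There is no exponential growth and no $F(\theta)-\inf F$ term in the drift.

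Only after this is the convexity inequality used in the same recursion. After averaging, the drift appears solely through $\frac{a\beta}{2}u_k$, which is where the $\gamma^2n\alpha^3L$ terms come from. The bound $\sigma_i^2(\theta)\le2\sigma_i^2(\theta_0)+2L_i^2\|\theta-\theta_0\|^2$ is applied only at the very end.
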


\subsection{Convergence of SGD on the unconstrained dual problem} \label{sec:sgddual} Let us consider an RKHS $\cH \subset L^2(\cX)$, with reproducing Mercer kernel~$K$, satisfying the \textit{universal approximation property}~\cite{micchelli2006universal}, \textit{i.e.}, ${\bar \cH = L^2(\cX)}$. Let us denote by $\langle \cdot , \cdot \rangle_\cH$ the inner product of $\cH$ such that $\forall h \in \cH, \langle h, K(x, \cdot) \rangle_\cH = h(x)$, and by $\| \cdot \|_\cH$ the corresponding norm. 
Examples of RKHS that are dense in $L^2(\cX)$ are based on  translation-invariant kernels. In dimension $d=1$, such kernels are of the form $K(x, y) = \kappa(x-y)$, where $\kappa : [0, 1] \rightarrow \Rr$ is a squared-integrable 1-periodic function with non-negative Fourier coefficients $(\hat \kappa_\omega)_{\omega \in \Zz}$~\cite{wahba1990spline}. In this case, an explicit feature embedding $\Phi(x) = K(x, \cdot)$ is given by $\Phi(x) = (\sqrt{\hat \kappa_\omega} e^{2i\pi \omega x})_{\omega \in \Zz}$, so that 
\begin{align*}
    K(x, y) = \langle \Phi(x), \Phi(y) \rangle_\cH = \sum_{\omega \in \Zz} \hat \kappa_\omega e^{2i\pi \omega (x-y)}  \,  .
\end{align*}
With different choices of $(\hat \kappa_\omega)_\omega$ sequences, one can construct different RKHS $\cH$, among which Sobolev spaces,  imposing different regularity conditions on the functions they contain~\cite{bach2017equivalence}. A simple extension to $\cX=[0, 1]^d$ is to define the kernel as a point-wise product of one-dimensional kernels $K(x, y) = \prod_{i=1}^d \kappa(x_i-y_i)$, although other options can be preferred to recover Sobolev spaces~\cite{bach2017equivalence,berlinet2011reproducing}.

In our specific setting, because we will consider operators that depend on the first derivatives of functions in $\cH$, we need to introduce a slight modification of the universal approximation property, and assume that \begin{align*}
    \overline{\cH}^{H^1(\cX)} = H^1(\cX) \, ,
\end{align*} where $H^1(\cX)$ is the Sobolev space $W^{1,2}(\cX)$ of functions whose weak-derivatives up to order 1 are square-integrable, and the density holds with respect to the $H^1$-norm.  %
Moreover, on the torus, it is easy to construct a 
large class of translation-invariant kernels that are 
dense in $H^1$, using the Fourier coefficients of the kernel~\cite{micchelli2006universal,adams2003sobolev,pillaud2023kernelized}.   %

 $V^*$ being uniformly bounded on~$\cX$, its $L^2$-norm is bounded by $\|f\|_\infty$. However, due to its lack of regularity, $\| V^* \|_\cH$ is not necessarily finite, meaning that generally, $V^*$ belongs to~$\bar \cH$ but not to~$\cH$. Therefore, we will propose a version of \eqref{DM} where $V$ is restricted to belong to~$\cH$, allowing for a practical finite-dimensional representation thanks to the so-called \textit{kernel trick}, and conclude by a density argument.
 
 In this Section, \textit{we restrict to the quadratic case $q=q'=2$}, where results from Section~\ref{sec:auxsgd} can be applied directly. If $V \in \cH$, the reproducing property ensures that for any $x \in \cX$, $V(x) = \langle V, \Phi(x) \rangle_\cH $. Similar reproducing properties for the first-order derivatives~\cite{zhou2008derivative} of $V$ exist as soon as $K \in \cC^2(\cX \times \cX)$, so that for any $i \in \{1,..., d\}$,%
 \begin{align*}
     \frac{\partial V}{\partial x_i}(x)&= \left\langle V, ~ \frac{\partial K}{\partial x_i}(x, \cdot) \right\rangle_\cH =   \left\langle V, ~ \frac{\partial \Phi}{\partial x_i}(x) \right\rangle_\cH  \, . %
 \end{align*}
Using this property, and the fact that $\cX=[0, 1]^d$ has Lebesgue measure 1, we can define the following counterpart of \eqref{DM} in $\cH$:
\begin{align*} & (\text{D}(M,\cH)) \quad\qquad\qquad
\sup_{V\in \cH} \ \ \Ee_{x \sim \cU(\cX)} ~ \Big\{ ~ \Lambda(V, x)  - M \left[ Q(V, x)  \right]_+ \Big\} \, \, , \hspace{8em}  %
\end{align*}
where for each $x\in\cX$, $\Lambda(\cdot, x)$ and $Q(\cdot, x)$ are respectively linear and quadratic functions of $V$:
 \begin{align*}
 \Lambda(V, x) &= \langle V, ~ \mu_0(x) \Phi(x) \rangle_\cH \\
 Q(V, x) &= - f(x) + \left\langle V, ~ \Phi(x) - \frac{1}{\rho}\sum_{i=1}^d a_i(x) \frac{\partial \Phi}{\partial x_i}(x) \right\rangle_\cH +  \langle V, ~ \Xi(x) V \rangle_\cH, \\
 \text{where ~~}  \Xi(x) V & = \frac{1}{2\rho^2} \sum_{k=1}^m \left\{ \left\langle V, ~\sum_{j=1}^d B_{j,k}(x) \frac{\partial \Phi}{\partial x_j}(x) \right\rangle_\cH \sum_{i=1}^d B_{i,k}(x)\frac{\partial \Phi}{\partial x_i}(x)  \right\} \, .
\end{align*}
Let $H_M(V, x) =  \Lambda(V, x) - M \left[ Q(V, x)  \right]_+ $ and $F_M : V \mapsto \Ee_x [H_M(V, x)] $, $V_0=0$  and $\gamma >0$. %
We define the following single-pass stochastic gradient ascent iterations on (D($M, \cH$)): at each time step $n \geq 1$, sample~$x^{(n)}$ uniformly on~$\cX$ and make the update:
\begin{align} \label{sgdH}
    V_{n} = V_{n-1} + \gamma \frac{\partial H_M}{\partial V}(V_{n-1}, x^{(n)}) \, .
\end{align}

\begin{theorem} \label{thm:dual}
Assume that $q=2$, and that Assumptions \ref{hypo:regularity}-\ref{hypo:alpha}-\ref{hypo:mustarbounded} hold. %
     Let $n \geq 1$, and set $\gamma = C n^{-2/3}$. Define $\bar V_n = \frac{1}{n} \sum_{k=0}^{n-1} V_k$. Then, there exist $C_1, C_2 \geq 0$ independent from $n$ such that:
     \begin{align*}
     \Ee [F_M(\bar V_n)]   \geq \sup_{V \in  \cH} \left\{  F_M(V)   - C_1 n^{-1/3}\| V\|_\cH
     ^2 \right\} - C_2 n^{-1/3} \, .
\end{align*}
If, in addition,  $\overline{\cH}^{H^1(\cX)} = H^1(\cX)$:
        \begin{align*}
            \lim_{n \to +\infty} \Ee [F_M(\bar V_n)] = \int_\cX V^*(x) \mu_0(x) \, \mathrm d x \, .
        \end{align*}
\end{theorem}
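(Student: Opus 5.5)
The plan is to apply Proposition~\ref{prop:sgd} to $F := -F_M$ on the Hilbert space $\cH$, and then to close the gap with a density argument combined with Proposition~\ref{prop:DDM} and Theorem~\ref{thm_duality}.

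\textbf{Step 1: casting $-F_M$ in the form of Proposition~\ref{prop:sgd}.} Write
\begin{align*}
-H_M(V,x) = \rho_1(g_1(V,x)) + \rho_2(g_2(V,x)), \qquad g_1(V,x) = -\Lambda(V,x),\ \rho_1=\mathrm{id}, \qquad g_2(V,x)=Q(V,x),\ \rho_2 = M[\cdot]_+ .
\end{align*}
Here $g_1$ is linear, hence convex and $0$-smooth, with $\alpha_1=1$. The map $g_2(\cdot,x)$ is a convex quadratic, since $\langle V,\Xi(x)V\rangle_\cH = \frac{1}{2\rho^2}\|B(x)^\top\nabla V(x)\|^2\ge 0$. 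Its Hessian $2\Xi(x)$ has operator norm at most
\begin{align*}
\rho^{-2}\,\|B\|_\infty^2 \sup_x \sum_j \Big\|\frac{\partial\Phi}{\partial x_j}(x)\Big\|_\cH^2 =: L_2 ,
\end{align*}
which is finite because $K\in\cC^2$ and $\cX$ is compact. The function $\rho_2$ is convex and non-decreasing with derivative in $[0,M]$, so $\alpha_2=M$.

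\textbf{Step 2: bounding the variance term.} With $V_0=0$, the gradients at $V_0$ are
\begin{align*}
g_1'(0,x) = -\mu_0(x)\Phi(x), \qquad g_2'(0,x) = \Phi(x) - \rho^{-1}\sum_i a_i(x)\,\partial_i\Phi(x),
\end{align*}
and both are bounded uniformly in $x$ by A1, the boundedness of $\mu_0$, and the boundedness of $K$ and its derivatives. Hence $\sigma^2(V_0)<\infty$.

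\textbf{Step 3: the rate.} The admissibility condition $\gamma\le 1/(4\alpha_2L_2)$ holds for $n$ large, or for all $n$ after taking $C$ small. Plugging $\gamma=Cn^{-2/3}$ into Proposition~\ref{prop:sgd} gives
\begin{align*}
\frac{1}{\gamma n}\sim n^{-1/3}, \qquad \gamma^2 n \sim n^{-1/3}, \qquad \gamma \sim n^{-2/3}\le n^{-1/3}.
\end{align*}
So the coefficient of $\|V\|_\cH^2$ (recall $\|V-V_0\|=\|V\|$) is $C_1 n^{-1/3}$, and the $\sigma^2$ term is $C_2 n^{-1/3}$. Negating and using $\inf(-F_M)=-\sup_\cH F_M$ yields the first inequality.

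\textbf{Step 4: the limit, upper bound.} Since $F_M$ is defined on $\cH\subset \cC^1(\cX)$ (because $K\in\cC^2$), we have
\begin{align*}
\Ee F_M(\bar V_n)\le \sup_{\cC^1}F_M = \mathrm{val}(\mathrm{D}(M)) = \mathrm{val}(\mathrm{D}) = \int V^*\mu_0 ,
\end{align*}
by Proposition~\ref{prop:DDM} and Theorem~\ref{thm_duality}.

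\textbf{Step 5: the limit, lower bound.} For fixed $V\in\cH$, the first inequality gives $\liminf_n \Ee F_M(\bar V_n)\ge F_M(V)$, so it suffices to show $\sup_\cH F_M=\sup_{\cC^1}F_M$. This is the main obstacle: one needs continuity of $F_M$ on $H^1$ together with the density assumption. When $q=2$, the integrand is $[\,\cdot\,]_+$ of a quantity that is affine in $(V,\nabla V)$ plus a term quadratic in $\nabla V$, and $[\cdot]_+$ is 1-Lipschitz. Hence for $V,W\in H^1$,
\begin{align*}
|F_M(V)-F_M(W)| \le C\big(\|V-W\|_{H^1} + \|\nabla V-\nabla W\|_{L^2}\,(\|\nabla V\|_{L^2}+\|\nabla W\|_{L^2})\big),
\end{align*}
by Cauchy--Schwarz and the boundedness of $a$, $B$ and $\mu_0$. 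So $F_M$ is locally Lipschitz on $H^1$. Given any $V\in\cC^1\subset H^1$, take $V_k\in\cH$ with $V_k\to V$ in $H^1$; then $F_M(V_k)\to F_M(V)$. Therefore $\sup_\cH F_M\ge\sup_{\cC^1}F_M$, which combined with Step 4 gives the stated limit.
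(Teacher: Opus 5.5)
Your proposal is correct and follows the paper's proof essentially step for step. You apply Proposition~\ref{prop:sgd} to $-F_M$ with $\rho_1=\mathrm{id}$, a positive-part $\rho_2$, $V_0=0$ and $\gamma\propto n^{-2/3}$, then get the upper bound from Proposition~\ref{prop:DDM} and Theorem~\ref{thm_duality} and the lower bound from $H^1$-density plus continuity of $F_M$ (putting $M$ into $\rho_2$ rather than into $g_2$, as you do, changes nothing). The only minor differences are that you actually check the local Lipschitz continuity of $F_M$ on $H^1$, which the paper only asserts, and that you go through $\sup_{\cC^1}F_M=\mathrm{val}(\mathrm{D}(M))$ instead of picking an $\varepsilon$-optimal feasible $V_\varepsilon$ for (D), for which $F_M(V_\varepsilon)=\int V_\varepsilon\mu_0$.
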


\begin{remark} The above theorem only provides an asymptotic result. One could also obtain \textit{non-asymptotic} convergence rates, but at the cost of additional regularity assumptions on $V^*$ with respect to $\cH$ (see, \textit{e.g.}, \cite[Chapter~7]{bach2024learning}), which do not generally hold in our present setting. 
\end{remark}

\subsection{Finite-dimensional implementation} Each iteration~\eqref{sgdH} takes place in an infinite-dimensional space. As often with kernel methods, it turns out that manipulating this abstract representation is not necessary, and the iterations can be derived by  using only finite-dimensional coefficients. This can be simply obtained by construction from~\eqref{sgdH}, and provides an algorithmic equivalent of classical representer theorems~\cite{scholkopf2001generalized, zhou2008derivative}, without any regularization needed.

The following proposition proves that at each step $n$, $V_n$ has a $n(d+1)$-dimensional representation. This representation can be computed exclusively from evaluations of the kernel $K$ and its first and second-order derivatives~\cite{zhou2008derivative} at observations $(x^{(k)})_{1\leq k\leq n}$ --- an observation generally referred to as a \textit{kernel trick}.%

\begin{proposition} \label{prop:implem}
  Let $N \geq 1$.  There exist $\alpha \in \Rr^N$ and $\beta \in \Rr^{N \times d}$ such that
\begin{align}
   \forall n \in \{1, ... , N \}, \quad V_n = \sum_{k=1}^{n}\alpha_{k} \Phi(x^{(k)}) + \sum_{k=1}^{n} \sum_{i=1}^d \beta_{k, i} \frac{\partial \Phi}{\partial x_i}(x^{(k)}) \, . \label{eqn:vn}
\end{align}
In particular, for any $y \in \cX$, $j \in \{1,...,d\}$ and $n \in \{1, ... , N \}$:%
\begin{align}
    V_n(y) & = \sum_{k=1}^{n}\alpha_{k} K(x^{(k)}, y) + \sum_{k=1}^{n} \sum_{i=1}^d \beta_{k, i} \frac{\partial K}{\partial x_i}(x^{(k)}, y) \label{eqn:vnx} \\
    \frac{\partial V_n}{\partial y_j}(y) &= \sum_{k=1}^{n}\alpha_{k} \frac{\partial K}{\partial y_j}(x^{(k)}, y)+ \sum_{k=1}^{n} \sum_{i=1}^d \beta_{k, i} \frac{\partial^2 K}{\partial x_i \partial y_j}(x^{(k)}, y) \, . \label{eqn:vndx}
\end{align}
\end{proposition}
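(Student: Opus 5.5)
The plan is a proof by induction on $n$, computing the stochastic gradient $\frac{\partial H_M}{\partial V}(V_{n-1}, x^{(n)})$ explicitly. The claim is that it always lies in the span of $\Phi(x^{(n)})$ and $\frac{\partial \Phi}{\partial x_i}(x^{(n)})$, $i=1,\dots,d$. The coefficients are fixed once and for all: the $k$-th row $(\alpha_k,\beta_{k,\cdot})$ is determined at step $k$ and never modified afterwards. This is why a single pair $(\alpha,\beta)\in\Rr^N\times\Rr^{N\times d}$ serves for every $n\le N$, and in particular $V_n$ and $V_{n-1}$ share their first $n-1$ coefficients. The base case follows from $V_0=0$, which is the empty sum.

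For the gradient computation, fix $x$ and $V$ and differentiate $H_M(V,x)=\Lambda(V,x)-M[Q(V,x)]_+$.
\begin{itemize}
\item $\Lambda$ is linear, with gradient $\mu_0(x)\Phi(x)$.
\item For $Q$, the linear part contributes $\Phi(x)-\rho^{-1}\sum_i a_i(x)\frac{\partial\Phi}{\partial x_i}(x)$.
\item The quadratic part $\langle V,\Xi(x)V\rangle_\cH=\frac{1}{2\rho^2}\sum_k \langle V, w_k(x)\rangle_\cH^2$, with $w_k(x)=\sum_j B_{j,k}(x)\frac{\partial\Phi}{\partial x_j}(x)$, has gradient $\rho^{-2}\sum_k \langle V,w_k(x)\rangle_\cH\, w_k(x)$. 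By the derivative reproducing property, $\langle V,w_k(x)\rangle_\cH=(B(x)^\top\nabla V(x))_k$.
\end{itemize}
The positive part is handled through its derivative $\indic_{Q>0}$, i.e.\ a subgradient choice at $Q=0$. Any fixed choice in $[0,1]$ works, and this is the only nonsmooth point to address. Putting these together, the update reads
\begin{align*}
V_n = V_{n-1} + \gamma\Big[ \big(\mu_0(x^{(n)}) - M\indic_{Q(V_{n-1},x^{(n)})>0}\big)\Phi(x^{(n)}) + \sum_{i=1}^d c_i\, \frac{\partial\Phi}{\partial x_i}(x^{(n)})\Big],
\end{align*}
with
\begin{align*}
c_i = M\indic_{Q(V_{n-1},x^{(n)})>0}\Big(\rho^{-1}a_i(x^{(n)}) - \rho^{-2}\big(B(x^{(n)})B(x^{(n)})^\top\nabla V_{n-1}(x^{(n)})\big)_i\Big).
\end{align*}
We then set $\alpha_n$ and $\beta_{n,i}$ to $\gamma$ times these brackets. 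They are computable because $Q(V_{n-1},x^{(n)})$ and $\nabla V_{n-1}(x^{(n)})$ are expressed, by the induction hypothesis, through the finite expansion \eqref{eqn:vn} and kernel evaluations. This proves \eqref{eqn:vn}.

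Formulas \eqref{eqn:vnx} and \eqref{eqn:vndx} then follow by pairing \eqref{eqn:vn} with $\Phi(y)$ and with $\frac{\partial\Phi}{\partial y_j}(y)$. For the first, we use $\langle\Phi(x),\Phi(y)\rangle_\cH=K(x,y)$ and $\langle\frac{\partial\Phi}{\partial x_i}(x),\Phi(y)\rangle_\cH=\frac{\partial K}{\partial x_i}(x,y)$. For the second, we use the analogous mixed second derivatives, which are valid since $K\in\cC^2$ (as in \cite{zhou2008derivative}).

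The main obstacle is modest: it is justifying the derivative reproducing identities in the inner products, i.e.\ that $\frac{\partial\Phi}{\partial x_i}(x)\in\cH$ and that one can differentiate inside the inner product. For the quantities needed here this is guaranteed by $K\in\cC^2(\cX\times\cX)$, as already used when defining $Q$. The rest is bookkeeping.
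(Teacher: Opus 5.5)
Your proposal is correct and follows essentially the same route as the paper. Both argue by induction on $n$ using the explicit stochastic gradient of $H_M$, which yields the same recursion $\alpha_n=\gamma\mu_0(x^{(n)})-\gamma M\indic_n$ and $\beta_{n,\cdot}=\gamma M\indic_n\bigl(\rho^{-1}a-\rho^{-2}BB^\top\nabla V_{n-1}\bigr)(x^{(n)})$, where $\indic_n$ is the chosen subgradient of $[\cdot]_+$ at $Q(V_{n-1},x^{(n)})$; both then pair with $\Phi(y)$ and $\partial_{y_j}\Phi(y)$ to get the evaluation formulas. The only differences are cosmetic: you start from the empty sum $V_0=0$ instead of computing $V_1$ (the paper's base case relies on $Q(V_0,x^{(1)})=-f(x^{(1)})\le 0$), and you take $\indic_{\{Q>0\}}$ where the paper takes $\indic_{\{Q\ge 0\}}$.
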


\begin{remark} One should be careful when manipulating the derivatives of $K$: the partial derivatives with respect to the first $d$ variables are different from those from the last $d$ variables. However, since $K$ is translation-invariant, we have $\forall x, y \in \Rr^d$, $\forall j \in \{1, ..., d\}$,  $\frac{\partial K}{\partial y_j}(x, y) = - \partial_j \kappa(x-y) =  -\frac{\partial K}{\partial x_j}(x, y)$. Therefore, evaluations of~$V$ and its derivatives can be computed solely from the kernel matrices computed with~$K$ and its first and second derivatives with respect to the first $d$ entries.%
\end{remark}

\begin{remark} Adding a random perturbation --- usually a Brownian motion --- to the dynamics of the system is a standard way to enhance the regularity of the value function, making it twice differentiable~\cite{fleming2012deterministic}. This perturbation adds a Laplacian term~$\Delta V$ to the HJB equation. The dual approach proposed above can be readily adapted, at the mild cost of increasing the number of parameters of the representation of~$V_n$ from $n(d+1)$ to $n(d+2)$. The additional parameters account for  $n $ extra  terms of the form $\sum_{i=1}^d  \frac{\partial^2 \Phi}{\partial x_i^2}(x^{(k)})$  appearing in~\eqref{eqn:vn}, coming from the Laplacian.
\end{remark}

\subsection{Illustrative example} \label{subsec:ex}
We introduce the following example on the circle, with $\mathcal{X} = [0, 1]$, $\mathcal{U} = \mathbb{R}$, $\mu_0 = \cU([0, 1])$, $\dot x = u$,  $f(x) = x(x-1)+1/4$, and $R(u)=u^2/2$.
In this case, we can compute explicitly $u^*$, $V^*$ and $\mu^*$, and observe the influence of the discount factor~$\rho$. $V^*$ is computed from the HJB equation by identification of a degree-two polynomial, periodic on the torus, and we obtain $V^*(x) =  \alpha_\rho f(x)$, with \begin{align} \label{eqn:alpharho}
    \alpha_\rho = \left(\sqrt{1+ \frac{8}{\rho^2}} - 1 \right) \frac{\rho^2}{4} \, .
\end{align}
Then, we can compute the optimal controller and occupation measure as follows. $u^*(x) = - \frac{1}{\rho} \nabla V^*(x) = - \frac{2 \alpha_\rho}{\rho} (x-1/2)$. Let $y=x-1/2$. Then, under the optimal controller $u^*$, the trajectory at time $t$ is \begin{align*}
y(t) = y(0) e^{-\frac{2\alpha_\rho}{\rho} t} \, .\end{align*}
Note that $\alpha_\rho > 0$ for any $\rho >0$.  Hence $\mu^*(x)$ is the probability density of $Y(T)+1/2$, where $Y_0 \sim \cU([-1/2, 1/2])$ and $T \sim \text{Exp}(\rho)$, with $Y_0$ and $T$ independent. Let ${\beta_\rho = \frac{\rho^2}{2\alpha_\rho}}$. For $y > 0$, after some computations, we obtain, for $\rho \neq 1$:
\begin{align*}
    \mathbb{P}(Y(T) \leq y) = y+1/2 + \frac{y^{\beta_\rho}}{1-{\beta_\rho}} \left[ (1/2)^{1-{\beta_\rho}} - y^{1-{\beta_\rho}} \right] \, , 
\end{align*}
and for $\rho=1$, $\alpha_\rho=1/2$ and $\beta_\rho=1$, in which case:
\begin{align*}
    \mathbb{P}(Y(T) \leq y) = y+1/2 -y \log(2y) \, . 
\end{align*}
From that, we can deduce the expression of the density $\mu^*(x)$ as follows:%
\begin{align*}
    \mu^*(x) &=\left\{
    \begin{array}{ll}
        - \frac{{\beta_\rho}}{1-{\beta_\rho}} +(1/2)^{1-{\beta_\rho}} \frac{{\beta_\rho}}{1-{\beta_\rho}} {|x-1/2|}^{{\beta_\rho}-1}& \mbox{if }\rho \neq 1  \\
       - \log |2x-1| & \mbox{if }\rho=1 \, .
    \end{array}
\right.
\end{align*}
One can check that $\mu^*(x)$ grows unbounded as $x \rightarrow 1/2$ if and only if $\rho \leq 1$ (see Figure~\ref{fig:mustar}), therefore Assumption~\ref{hypo:mustarbounded} holds if and only if $\rho > 1$.

\begin{figure}
    \centering
    \includegraphics[width=0.5\linewidth]{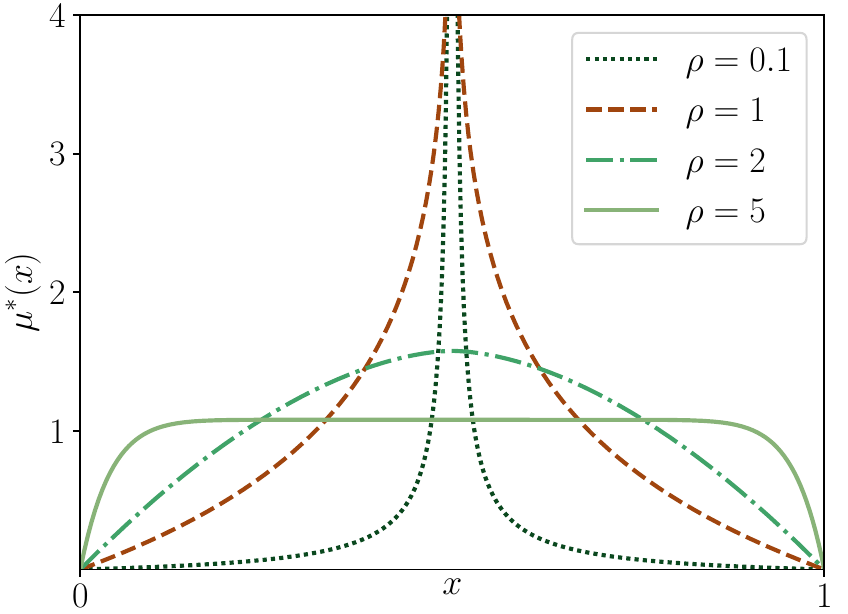} \vspace{-0.25cm}
    \caption{The optimal occupation measure~$\mu^*$ can be bounded or not, depending on the value of~$\rho$. In this example, this change occurs at $\rho=1$. This appears even on this simple example; this is not a pathological phenomenon, but a common one, as discussed in more details in Section~\ref{sec:mustar}.}
    \label{fig:mustar}
\end{figure}

\paragraph{Application of SGD on the dual} We implement the iterations~\eqref{sgdH} on this one-dimensional problem. We choose as RKHS~$\cH$ the Sobolev space of regularity $s=2$ denoted by $H_2^{\text{per}}$~\cite{wahba1990spline}. It is associated with the Sobolev kernel 
\begin{align*}
    K^{\text{Sob}}_{2}(x, y) = 1 - \frac{(2\pi)^4}{24} B_4(\{x-y \}) \, ,
\end{align*}
where $\{x\} := x - \lfloor x \rfloor$ and $B_4(x) = x^4 - 2 x^3 + x^2 - \frac{1}{30} $ is the fourth Bernoulli polynomial~\cite{olver2010nist}. One can check that we have $K^{\text{Sob}}_{2} \in \cC^2 (\cX \times \cX)$, which would not have been the case for the Sobolev kernel of regularity $s=1$. We compute the averaged iterates $\bar V_n$ with $\rho = 2$%
, $M=2$ --- so that Assumption~\ref{hypo:mustarbounded} holds --- and constant step size $\gamma= 5 \times 10^{-4}$.  The results are shown in Figures~\ref{fig:dualperf} and~\ref{fig:dual}.

Not only does $F_M(\bar V_n)$ converge from below  to the optimal value of the problem $\int_\cX V^* \mathrm d \mu_0$ (see Figure~\ref{fig:dualperf}), we also observe that $\bar V_n$ effectively approaches $V^*$ uniformly (Figure~\ref{fig:dual}). However, examining the candidate controller $\bar u_n = - \bar V'_n / \rho$ resulting from~$\bar V_n$, we see that it struggles to capture the right value of~$u^*$ as $x \rightarrow 0$ and $1$. This is because~$\cH$ only contains periodic functions of class~$\cC^2$ on the torus, hence cannot model functions with discontinuous derivatives. As we see in Figure~\ref{fig:dualperf}, the performance of this controller, defined as the expected cost of running it: \begin{align} \label{eqn:poleval}
    \text{Perf}(u) = \Ee_{x_0 \sim \mu_0} \left\{ \rho \int_0^{+\infty} e^{-\rho t}(f(x_t)+ R(u_t)) \,  \mathrm d t \right\}  \,  \text{ where } \dot x_t = g(x_t, u_t) \, ,
\end{align} remains slightly suboptimal. Achieving convergence of the controller in this example would require $\bar V_n$ to converge in the norm $\varphi \mapsto \| \varphi \|_{L^2} + \| \varphi' \|_{L^2}$ of the Sobolev space~$H^1$, ensuring that $\bar u_n = - \bar V'_n / \rho$ converges to $u^*$.

Drawing a parallel with generic methods that aim to solve partial differential equations (PDEs) by minimizing their residuals, \textit{e.g.}, the least-squares finite element method (LSFEM) based on minimizing their~$L^2$-norm, in general they only provide~$L^2$ convergence to a solution of the PDE~\cite[Chapter~4]{jiang2013least}. Only extra coerciveness properties, fulfilled by linear elliptic systems --- which is not the case of the HJB equation even on this simple example --- allow for $H^1$-norm convergence of the approximation.

In conclusion, %
while directly optimizing the value function through solving problem~\eqref{DM} is an effective way to find some $V \simeq V^*$, it provides little guarantee on the quality of the controller based on $\nabla V$. %
Yet, in the context of optimal control or reinforcement learning problems, the purpose of optimizing the value function is ultimately to find an approximately optimal controller $u$ such that $\text{Perf}(u) \simeq \text{Perf}(u^*)$. In the next sections, we will explore other approaches, not directly based on the value function, and explain why they could be more promising for this goal. %

\begin{figure}
    \centering
    \includegraphics[width=0.45\linewidth]{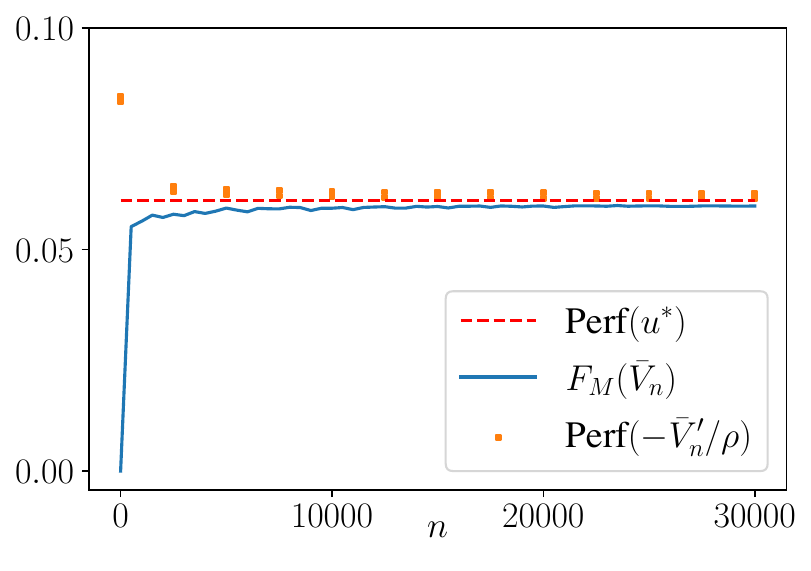}
    \vspace{-0.35cm}
    \caption{Evolution of the dual cost, along with the performance of the induced controller, over the iterations. Note that the performance is evaluated up to some finite precision, due to discrete-time approximations in the simulated trajectories.}
    \label{fig:dualperf}
\end{figure}

\begin{figure}
    \centering
    \includegraphics[width=0.98\linewidth]{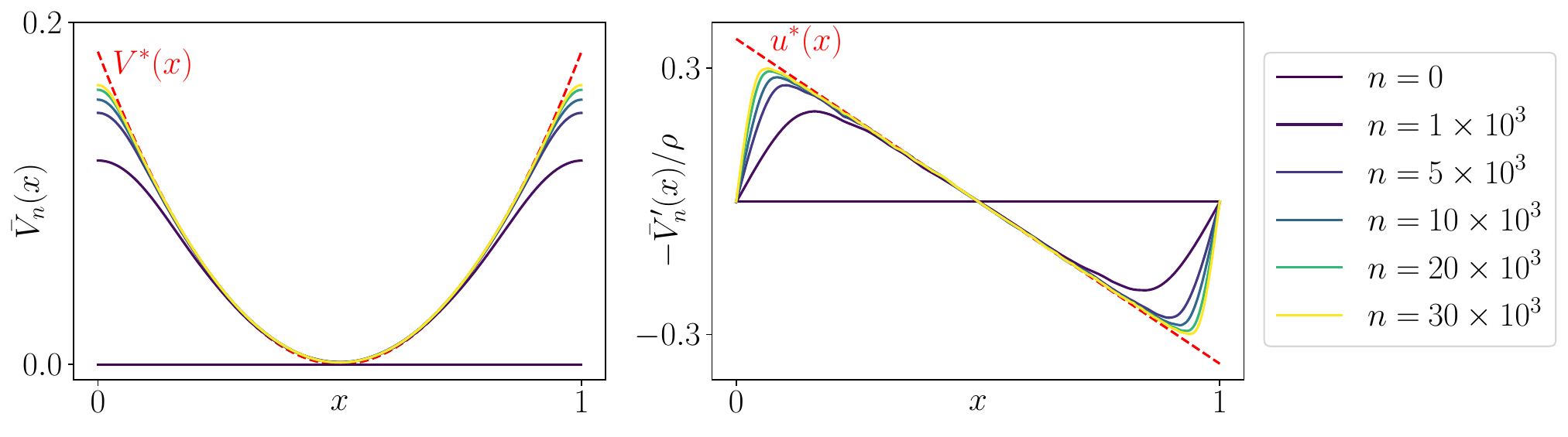} \vspace{-0.3cm}
    \caption{$\bar V_n$ obtained from iterations~\eqref{sgdH}, and the corresponding candidate controller obtained from  its gradient $\bar V'_n$ are plotted for growing values of~$n$, along with $V^*$ and $u^*$ in red dotted lines.}
    \label{fig:dual}
\end{figure}

\section{The primal problem} \label{sec:primal}

We now consider the primal linear program~\eqref{P}. Like~\eqref{D}, it has an infinite number of constraints. We start by reformulating it as a constrained problem, and then introduce a simple change of variables that will make the problem compatible with stochastic optimization when the dynamics is linear in the controller: $b(x, u) = B(x) u$.

\subsection{Exact penalty formulation of the primal}  \label{sec:exactpen} 
With the same manipulation as for the dual, we can transform Liouville's equality constraint into an exact penalty formulation. Indeed, by strong duality and noting that~$V^*$ takes values in $[0, \|f\|_\infty]$, \eqref{P} is equivalent to:%
\begin{align*}
     \inf_{\nu: \cX \times \Rr^m \to \Rr_+} \ & \sup_{V : \cX \rightarrow [0, \|f\|_\infty]} \  \int_{\X} \int_{\Rr^m} \big[ f(x) + R(u) \big] \nu(x,u) \, \mathrm du\,  \mathrm dx \\
    & + \int_\cX V(x) \left(\mu_0(x) - \mu(x) - \frac{1}{\rho} {\rm div} \Big(
\int_{\Rr^m} \nu(x,u) b(x, u) \, \mathrm du \Big) \right)  \mathrm dx \, ,
\end{align*}
where $\mu(x) = \int_{\rb^m} \nu(x,u) \, \mathrm du$. This is in turn equivalent to:%
\begin{align} 
    \inf_{\nu: \cX \times \Rr^m \to \Rr_+} \ &   \int_{\X} \int_{\Rr^m} \big[ f(x) + R(u) \big] \nu(x,u) \, \mathrm du\,  \mathrm dx \nonumber \\ 
    & + C_V \int_\cX  \left[\mu_0(x) - \mu(x) - \frac{1}{\rho} {\rm div} \left(
\int_{\Rr^m} \nu(x,u) b(x, u) \, \mathrm du \right) \right]_+  \mathrm dx \, , \label{eqn:Ppen0}
\end{align}
for any $C_V \geq \| f\|_\infty$. \begin{remark}
    While it is sufficient here to choose $C_V=\|f\|_\infty$, we will consider a larger $C_V$ in Section~\ref{sec:poleval}, to be defined later, for consistency with policy evaluation.
\end{remark}
\paragraph{Key reformulation with perspective functions} We now introduce a change of variable which unlocks a convenient representation of the occupation measure. Similarly to $\mu(x) = \int_{\Rr^m} \nu(x, u) \, \mathrm du$, let us define 
\begin{align} \label{eqn:wdef}
    w(x) = \int_{\Rr^m} \nu(x, u) u \, \mathrm du \in \Rr^m \, .
\end{align}
From Lemma \ref{lem:primal_on_proba}, the infimum in \eqref{P} can
be taken on probability measure supported by a $L^{\infty}$-function,
\textit{i.e.}, of the form:
\begin{align} \label{eqn:wstar}
    w(x) = \mu(x) u(x)
    \hspace*{0.3cm}
    \text{ for }
    \hspace*{0.3cm}
    u\in L^{\infty}(\cX).
\end{align}
Using~\eqref{eqn:wstar} and replacing $\nu:\cX \times \cU \rightarrow \Rr_+$ by $w$ and $\mu: \cX \rightarrow \Rr$ in \eqref{eqn:Ppen0}, we obtain the equivalent formulation:
\begin{align} 
        \inf_{\mu: \cX \to \Rr_+, ~w:\cX \to \Rr^m} \ &   \int_{\X} \left[ f(x) + R\left( \frac{w(x)}{\mu(x)} \right) \right] \mu(x) \,  \mathrm dx \nonumber \\ \label{eqn:Ppen} \tag{P-P}
    & + C_V \int_\cX  \left[\mu_0(x) - \mu(x) - \frac{1}{\rho} {\rm div} \left(
a(x) \mu(x) + B(x) w(x) \right) \right]_+  \mathrm dx \, .
\end{align}

\begin{remark}
    The objective of~\eqref{eqn:Ppen} (for penalized primal) is a convex function of $(\mu, w)$. In particular, $(\mu, w) \mapsto R(w/\mu) \mu$ is called a perspective function~\cite{combettes2018perspective}, and is convex. \eqref{eqn:Ppen} is an optimization problem on both the occupation measure $\mu : \cX \to \Rr_+$ and a new vector-valued function $w : \cX \to \Rr^d$, related at optimality to the controller by~\eqref{eqn:wstar} and playing the role of a ``control density''. Similar variables exist in other optimization problems used in related fields. For instance, in optimal transport, a related $w$ is called the ``momentum'' in~\cite{benamou2000computational}, the ``intensity of traffic'' in~\cite{brasco2010congested},  or the ``transport density'' in \cite{brasco2014continuous}. In mean field games~\cite{lasry2007mean}, a similar $w$ variable appears in variational formulations~\cite{cardaliaguet2015weak,briceno2019implementation}, without a specific name. 
\end{remark}

\subsection{Stochastic optimization for control-linear dynamics} \label{sec:primalstoch} Let us consider problem~\eqref{eqn:Ppen}. It appears as the minimization of an expectation, and we could do stochastic optimization provided we can parameterize $\mu$ and $w$. While $w$ can be parameterized in an RKHS like previously $V$ for the primal, parameterizing $\mu$ is more challenging because of the non-negativity constraint. Although some parameterizations could be considered, \textit{e.g.}, sum-of-squares based representations~\cite{lasserre2001global,rudi2025finding}, they typically require stronger regularity assumptions and introduce additional computational difficulties. Instead, let us focus on the case $a(x)\equiv 0$. The optimization problem in $\mu$ decouples and becomes separable for each $x \in \cX$, and reads
\begin{align} \label{eqn:decouple}
\inf_{w: \X \to \rb^d } \int_{\X} \left\{ \inf_{\mu(x)\geq 0} P_x(w, \mu(x)) \right\}  \mathrm dx \, ,
\end{align}
where
\begin{align*}
    P_x(w, \mu(x)) = \Big[ f(x) +  R \Big( \frac{ w(x)}{\mu(x)} \Big) \Big] \mu(x) 
+ C_V \left[  \mu_0(x) - \mu(x) - \frac{1}{\rho} {\rm div} \left(B(x) w(x) \right) \right]_+  \, .
\end{align*}
\begin{remark}
    Note that such a decoupling is not possible if $a(x) \neq 0$, because of the ``non-local'' term $\text{div}(a(x) \mu(x))$ that does not depend solely on the scalar $\mu(x)$. The primal-dual approach presented in Section~\ref{sec:primaldual} will overcome this difficulty.
\end{remark}
The inner minimization in~\eqref{eqn:decouple} is a scalar optimization problem that can be solved in closed-form using the following lemma.
\begin{lemma} \label{lemma2}
    Let $C \geq c \geq 0$. %
    Consider the following one-dimensional minimization problem, parameterized by $(\alpha, \beta) \in \Rr_+ \times \Rr$:
    \begin{align*}
        L_{c, C}(\alpha, \beta) := \min_{\mu \in \Rr_+} ~ c \mu + \frac{\alpha^q}{q} \mu^{1-q} + C [\beta - \mu]_+ \, .
    \end{align*}
    Let $c_0 = \left(\frac{q-1}{q}\right)^{1/q}  c^{-1/q}$ if $c>0$, and $c_0=+\infty$ if $c=0$. Then we have:%
    \begin{align*}L_{c, C}(\alpha, \beta) =\left\{
\begin{array}{cl}
   \left(\frac{q}{q-1}\right)^{\frac{q-1}{q}}  c^{\frac{q-1}{q}}  \alpha   & \textnormal{if }  \beta \leq c_0 \alpha \\
    c \beta + \frac{\alpha^q}{q} \beta^{1-q} & \textnormal{otherwise } , 
\end{array}
     \right.
\end{align*}
and the minimizer is $\mu^* = \max \{ c_0 \alpha, \beta \}$. 
Furthermore, $L$ is almost everywhere differentiable with:
\begin{align*} &\frac{\partial L_{c, C}}{\partial \alpha} = \left\{
\begin{array}{cl}
    \left(\frac{q}{q-1}\right)^{\frac{q-1}{q}}  c^{\frac{q-1}{q}}   & \textnormal{if ~} \beta < c_0 \alpha \\
    \alpha^{q-1} \beta^{1-q} & \textnormal{if ~} \beta > c_0 \alpha \, .
\end{array}
     \right.
      \\ \textnormal{ and } \quad &\frac{\partial L_{c, C}}{\partial \beta} =\left\{
\begin{array}{cl}
    0 & ~\,\textnormal{if ~}\beta < c_0 \alpha\\
    c - \frac{q-1}{q} \alpha^q \beta^{-q}  & ~\,\textnormal{if ~} \beta > c_0 \alpha \, .
\end{array}
     \right.
\end{align*}
\end{lemma}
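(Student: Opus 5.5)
The plan is a direct one-variable convex analysis of
\begin{align*}
\phi(\mu) := c\mu + \frac{\alpha^q}{q}\mu^{1-q} + C[\beta-\mu]_+ , \qquad \mu>0 .
\end{align*}
I write $\phi = g + C h$ with $g(\mu)=c\mu+\frac{\alpha^q}{q}\mu^{1-q}$ and $h(\mu)=[\beta-\mu]_+$.

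\textbf{Step 1: minimize $g$ alone.} Since $1-q<0$, $\mu\mapsto\mu^{1-q}$ is convex on $(0,\infty)$, so $g$ is convex there. Its derivative is
\begin{align*}
g'(\mu)=c-\frac{q-1}{q}\alpha^q\mu^{-q}.
\end{align*}
For $c>0$ and $\alpha>0$, $g'$ vanishes exactly at $\mu=c_0\alpha$. To evaluate $g(c_0\alpha)$, use $c_0^q=\frac{q-1}{qc}$, which gives $c_0^{1-q}/q = c\,c_0/(q-1)$. Hence
\begin{align*}
g(c_0\alpha)=\alpha\, c\,c_0\Big(1+\tfrac{1}{q-1}\Big)=\left(\tfrac{q}{q-1}\right)^{\frac{q-1}{q}}c^{\frac{q-1}{q}}\alpha ,
\end{align*}
which is the first branch of the formula. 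Moreover $g$ is decreasing on $(0,c_0\alpha]$ and increasing on $[c_0\alpha,\infty)$.

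\textbf{Step 2: split according to the sign of $\beta-c_0\alpha$.}

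\emph{Case $\beta\le c_0\alpha$.} At $\mu=c_0\alpha$ the penalty $h$ vanishes. Since $h\ge0$, we get $\phi\ge g\ge g(c_0\alpha)=\phi(c_0\alpha)$, so the minimizer is $c_0\alpha=\max\{c_0\alpha,\beta\}$.

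\emph{Case $\beta>c_0\alpha$.} On $(0,\beta)$ we have $\phi'(\mu)=g'(\mu)-C\le c-C\le 0$, using $C\ge c$. So $\phi$ is nonincreasing on $(0,\beta]$, and the minimum is attained on $[\beta,\infty)$. There $\phi=g$, which is increasing because $\beta>c_0\alpha$. Hence the minimizer is $\mu^*=\beta$ and the value is $c\beta+\frac{\alpha^q}{q}\beta^{1-q}$.

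\textbf{Step 3: differentiate $L_{c,C}$.} On the open set $\{\beta<c_0\alpha\}$, $L_{c,C}$ is linear in $\alpha$ and independent of $\beta$. On $\{\beta>c_0\alpha\}$ it equals $c\beta+\frac{\alpha^q}{q}\beta^{1-q}$, which is smooth. Differentiating these explicit expressions gives the stated partial derivatives. Both regions are open, and their common boundary $\{\beta=c_0\alpha\}$ is a half-line, hence Lebesgue-null. This yields almost-everywhere differentiability. As a consistency check, the two branches agree on the boundary, so $L_{c,C}$ is continuous.

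\textbf{Main obstacle: degenerate cases.} These need the convention $\frac{\alpha^q}{q}\mu^{1-q}=0$ when $\alpha=0$, and $+\infty$ when $\mu=0<\alpha$, matching the perspective function $R(w/\mu)\mu$.
\begin{itemize}
\item If $c=0$, then $c_0=+\infty$ and the first branch always applies. The value is $0$, obtained as an infimum when $\mu\to\infty$, and the derivative $g'(\mu)\le 0$ shows $g$ is decreasing. So "minimizer $=+\infty$" should be read in this limiting sense.
\item If $\alpha=0$, then $c_0\alpha=0$. The formulas still hold, with minimizer $[\beta]_+$; here I use that $c\mu+C[\beta-\mu]_+$ is minimized at $[\beta]_+$ because $C\ge c$.
\end{itemize}
I would state these conventions explicitly and verify each formula in these boundary cases separately.
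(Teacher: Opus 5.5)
Your proof is correct and follows essentially the paper's route. The decisive step is the same as in the paper: for $\mu<\beta$ the objective is non-increasing because $C\ge c$ and $\mu\mapsto\mu^{1-q}$ is decreasing, so it suffices to minimize $c\mu+\frac{\alpha^q}{q}\mu^{1-q}$ over $\mu\ge\max\{\beta,0\}$. Your explicit evaluation of $g(c_0\alpha)$, the derivative computations, the null-boundary argument for a.e.\ differentiability, and your treatment of the degenerate cases $c=0$ (where the minimum is only an infimum) and $\alpha=0$ fill in details the paper leaves implicit.
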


\begin{proof}
     The problem is equivalent to:
\begin{align*}
    \min \left\{ \min_{0\leq \mu \leq \beta} (c-C) \mu  + \frac{\alpha^q}{q} \mu^{1-q}+C \beta, \min_{\mu \geq \beta, \mu \geq 0} c \mu  + \frac{\alpha^q}{q} \mu^{1-q} \right\}. 
\end{align*}
Since $c - C \leq 0$ and $1-q < 0$, the left-hand term is a non-increasing function of $\mu$, hence its minimum is attained at $\mu= \beta$. The right-hand side can be minimized by setting its derivative to zero.
\end{proof}

\subsection{Convergence of SGD on the unconstrained primal problem} \label{sec:sgdprimal} Similarly to Section~\ref{sec:sgddual}, let us represent $w : \cX \to \Rr^m$ using a universal RKHS~$\cH$. Since~$w$ is vector-valued, each of its component is represented in~$\cH$, and if $w \in \cH^m$, we have the following vector-valued reproducing property \begin{align*}w(x) = \begin{bmatrix}
    \langle w_1, \Phi(x) \rangle_\cH \\ \vdots \\ \langle w_m, \Phi(x) \rangle_\cH
\end{bmatrix} \in \Rr^m %
\, .
\end{align*} %
As previously for the dual, we choose $K \in \cC^2(\cX, \cX)$ so that the partial derivatives of~$w$ can be computed for any $i \in \{1,..., d\}$ using%
\begin{align*} 
     \frac{\partial w}{\partial x_i}(x)=  \begin{bmatrix}  \left\langle w_1, ~ \frac{\partial \Phi}{\partial x_i}(x) \right\rangle_{\cH} , \dots , \left\langle w_m, ~ \frac{\partial \Phi}{\partial x_i}(x) \right\rangle_{\cH}  \end{bmatrix}^\top \in \Rr^m \, . %
 \end{align*}
Finally, let us equip $\cH^m$  with a norm defined by $\|w\|_{\cH^m}^2 =  \sum_{i=1}^m \|w_i\|_\cH^2$.

Let $\Gamma(w, x) := L_{f(x), C_V} \left( \|w(x)\|_2, \mu_0(x) - \frac{1}{\rho}  {\rm div} \left(B(x) w(x) \right)  \right) $. %
A non-parametric version of problem~\eqref{eqn:decouple} where $w$ is represented in $\cH^m$ can be rewritten as a stochastic optimization problem of the form:
\begin{align*} (\text{P}(\cH)) \quad\qquad\qquad\qquad\qquad
\inf_{w\in \cH^m} \ \ \Ee_{x \sim \cU(\cX)} ~ \Big\{ \Gamma(w, x) \Big\} \, \, . \hspace{12em}  %
\end{align*}
Let  $w_0=0$  and $\gamma >0$. We define the following single-pass stochastic gradient descent iterations on (P($\cH$)): at each time step $n \geq 1$, sample~$x^{(n)}$ uniformly on~$\cX$ and make the update:
\begin{align} \label{eqn:primaliter}
    w_{n} = w_{n-1} - \gamma \frac{\partial \Gamma}{\partial w}(w_{n-1}, x^{(n)}) \, ,
\end{align}
where $\frac{\partial \Gamma}{\partial w}(w_{n-1}, x^{(n)}) \in \partial_w \Gamma(w_{n-1}, x^{(n)})$ is a  subgradient of $\Gamma(w_{n-1}, x^{(n)})$. The following proposition ensures that the stochastic subgradients remain uniformly bounded regardless of the magnitude of $\| w_n\|_\cH$, and enables the use of a standard stochastic optimization result to obtain a convergence result.

\begin{proposition} \label{prop:boundedgrad}
    For any $x$, $\Gamma(\cdot, x)$ is convex. Furthermore, there exists a constant $D \geq 0$ depending on the RKHS~$\cH$ and on the control problem such that for almost any $x$:%
    \begin{align*}
        \sup_{w \in \cH^m} ~\left\|  \frac{\partial \Gamma(w, x)}{\partial w} \right\|_{\cH^m} \leq D \, .
    \end{align*}
\end{proposition}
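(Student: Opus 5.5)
The plan is to view $\Gamma(\cdot,x)$ as a composition $w\mapsto (\alpha(w),\beta(w)) \mapsto L_{f(x),C_V}(\alpha,\beta)$. The inner map is
\begin{align*}
\alpha(w)=\|w(x)\|_2,\qquad \beta(w)=\mu_0(x)-\frac1\rho\,{\rm div}(B(x)w(x)).
\end{align*}
Both pieces depend on $w$ only through evaluations of $w$ and of its first partial derivatives at the single point $x$, via the reproducing properties. Concretely, $\beta(w)=\mu_0(x)-\frac1\rho\sum_{i,k}\bigl(\partial_iB_{ik}(x)\langle w_k,\Phi(x)\rangle_\cH+B_{ik}(x)\langle w_k,\partial_i\Phi(x)\rangle_\cH\bigr)$. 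This requires $B$ to be differentiable a.e., which follows from its Lipschitz continuity (Rademacher), so that $\partial_i B_{ik}$ is a.e. defined and bounded by the Lipschitz constant. Hence $\beta$ is affine in $w$, and $\alpha$ is the Euclidean norm of a bounded linear map of $w$, hence convex.

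For convexity, I will show that $L:=L_{c,C}$ with $c=f(x)\ge0$ and $C=C_V\ge\|f\|_\infty\ge c$ is jointly convex in $(\alpha,\beta)\in\Rr_+\times\Rr$ and non-decreasing in $\alpha$. Joint convexity holds because $(\alpha,\mu)\mapsto \frac{\alpha^q}{q}\mu^{1-q}$ is a perspective of $|\cdot|^q/q$, hence convex, and $c\mu+C[\beta-\mu]_+$ is convex in $(\mu,\beta)$. Partial minimization over $\mu\ge0$ of a jointly convex function preserves convexity. Monotonicity in $\alpha$ is immediate from the integrand, or from Lemma~\ref{lemma2}, since $\partial_\alpha L\ge0$. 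Convexity of $\Gamma(\cdot,x)$ then follows from the standard composition rule: $L$ is convex, non-decreasing in its first argument, that argument is convex in $w$, and the second argument is affine in $w$.

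For the gradient bound, I will use the chain rule for subgradients, $\partial_w\Gamma \ni \partial_\alpha L\cdot g_\alpha + \partial_\beta L\cdot \nabla_w\beta$, where $g_\alpha$ is a subgradient of $w\mapsto\|w(x)\|$. Two bounds are needed.
\begin{itemize}
\item $\|g_\alpha\|_{\cH^m}\le \sup_x\|\Phi(x)\|_\cH=\sup_x K(x,x)^{1/2}$, since $g_\alpha = (e_k\,\Phi(x))_k$ weighted by a unit vector.
\item $\|\nabla_w\beta\|_{\cH^m}\le \rho^{-1}\bigl(\|\nabla B\|_\infty \sup\|\Phi\|_\cH+\|B\|_\infty\sum_i\sup\|\partial_i\Phi(x)\|_\cH\bigr)$. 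The last supremum is finite since $K\in\cC^2$, so $\|\partial_i\Phi(x)\|_\cH^2=\partial_{x_i}\partial_{y_i}K(x,x)$ is bounded.
\end{itemize}
The main point, and the expected obstacle, is bounding $\partial_\alpha L$ and $\partial_\beta L$ uniformly, independently of $w$. Using the formulas of Lemma~\ref{lemma2}, there are two regions.
\begin{itemize}
\item In the region $\beta<c_0\alpha$: $\partial_\alpha L=(q/(q-1))^{(q-1)/q}c^{(q-1)/q}\le (q/(q-1))^{(q-1)/q}\|f\|_\infty^{(q-1)/q}$ and $\partial_\beta L=0$.
\item In the region $\beta>c_0\alpha$: $\alpha/\beta<1/c_0$, so $\partial_\alpha L=(\alpha/\beta)^{q-1}\le c_0^{1-q}$, which is the same constant as in the first region. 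Also $|\partial_\beta L|\le c+\frac{q-1}{q}c_0^{-q}=2c\le 2\|f\|_\infty$.
\end{itemize}
The case $c=0$ (where $c_0=\infty$) gives $\partial_\alpha L=(\alpha/\beta)^{q-1}$ with no a priori bound when $\beta>0$ and $\alpha$ is large relative to $\beta$. I would handle this by arguing that we are always in the second region when $c=0$ and $\beta>0$, where the ratio can in fact blow up. So the step to check carefully is this degenerate case. I expect either to use the $C$-cap, which the formula of Lemma~\ref{lemma2} seems to ignore and which bounds the slope in $\beta$ by $C$ and suggests redoing the minimization with $\mu\le\beta$ allowed, or to restrict to a.e. $x$ with $f(x)>0$, plus a bound $c_0^{1-q}$ uniform away from $c=0$. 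On the boundary $\beta=c_0\alpha$ both one-sided derivatives are bounded by the same constants, so any subgradient (a convex combination) is bounded too. Collecting the constants gives $D$, depending on $K$, $B$, $\rho$, $q$, and $\|f\|_\infty$, $C_V$.
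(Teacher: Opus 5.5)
Your overall route is the paper's. You write $\Gamma(\cdot,x)$ as $L_{f(x),C_V}$ composed with $w\mapsto(\|w(x)\|,\beta(w))$, bound $\partial_\alpha L,\partial_\beta L$ from Lemma~\ref{lemma2}, and bound the $\cH^m$-norms of the inner derivatives via $\sup_x\|\Phi(x)\|_\cH$, $\sup_x\|\partial_i\Phi(x)\|_\cH$ and Rademacher for $B$. Your convexity argument is more careful than the paper's, which calls $\|w(x)\|$ ``affine''. You correctly use that $L$ is jointly convex and non-decreasing in $\alpha$, and you make an explicit subgradient choice at $w(x)=0$.

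The flaw is your treatment of the degenerate case $c=f(x)=0$, which you leave as the ``expected obstacle''. Your analysis of it is wrong, and the fixes you suggest are unnecessary. The restriction to $\{f>0\}$ would even be illegitimate, since a non-negative Lipschitz $f$ may vanish on a set of positive measure and the claim is for a.e.\ $x$.

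When $c=0$ you have $c_0=+\infty$, so the condition $\beta\le c_0\alpha$ always holds. You are always in the \emph{first} region, not the second, and the formula gives $L_{0,C}\equiv 0$. You can check this directly. Since $c\le C$, the branch $\mu<\beta$ is never better than $\mu=\beta$. This is how the $C$-cap is already built into Lemma~\ref{lemma2}, whose value does not depend on $C$. Hence $L_{0,C}(\alpha,\beta)=\inf_{\mu\ge\max(\beta,0)}\frac{\alpha^q}{q}\mu^{1-q}=0$, approached as $\mu\to\infty$, and the gradient vanishes. The expression $(\alpha/\beta)^{q-1}$ you worried about is the slope at $\mu=\beta$, which is not optimal there.

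There is also no blow-up as $c\to 0^+$. In the second region $\alpha/\beta<1/c_0$, so $\partial_\alpha L\le c_0^{1-q}=(q/(q-1))^{(q-1)/q}c^{(q-1)/q}$. This is increasing in $c$, hence bounded uniformly on $[0,\|f\|_\infty]$ by its value at $c=\|f\|_\infty$, and it tends to $0$ as $c\to0$. Likewise $0\le\partial_\beta L\le c\le\|f\|_\infty$.

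With these uniform bounds your constants close exactly as in the paper. This gives $D$ in terms of $q$, $\|f\|_\infty$, $\rho$, $d$, $\|B\|_\infty$, $\|\nabla B\|_\infty$, $\sup_x\|\Phi(x)\|_\cH$ and $\sup_x\|\partial_i\Phi(x)\|_\cH$, with no dependence on $C_V$ needed.
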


\begin{remark}
    In order to compute the gradient of $\Gamma(\cdot, x)$ for a given stochastic sample $x$,  we assume an access the evaluation of $f$, $a$, $B$ and $\nabla B$ at this $x$, recalling that by Rademacher's theorem, $B$ is differentiable almost everywhere. The access to $\nabla B$ is specific to the primal approach, and is not required in the dual (Section~\ref{sec:dual}) and  primal-dual (Section~\ref{sec:primaldual}) approaches, which only need accesses to evaluations of $f$, $a$ and $B$.
\end{remark}

\begin{theorem} \label{thm:primal}
Assume that $a(x) \equiv 0$, and that Assumptions \ref{hypo:regularity}-\ref{hypo:alpha} hold. %
     Let $n \geq 1$, define $\bar w_n = \frac{1}{n} \sum_{k=0}^{n-1} w_k$ and $G : w \mapsto \Ee_x [\Gamma(w, x)] $. Then we have:  %
     \begin{align} \label{eqn:convG}
     \Ee [G(\bar w_n)]   \leq \inf_{w \in \cH^m}  \left\{ G(w) + \frac{\|w\|_{\cH^m}^2}{2 \gamma n} \right\} + \frac{\gamma D^2}{2 } \, . 
\end{align} %
In particular, assume that  $\bar \cH = H^1(\cX)$, and  set $\gamma \propto \frac{1}{\sqrt{n}}$, then: %
        \begin{align*}
            \lim_{n \to +\infty} \Ee [G(\bar w_n)]  = \inf_{w \in (H^1(\cX))^m} G(w) \, .
        \end{align*} %
\end{theorem}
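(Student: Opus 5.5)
The bound \eqref{eqn:convG} is the classical guarantee for averaged stochastic subgradient descent with bounded subgradients, run in the Hilbert space $\cH^m$ from $w_0=0$. The limit statement then follows by choosing $\gamma$ and using a density argument.

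\emph{Step 1: the one-step inequality.} Fix any comparator $w\in\cH^m$ and write $g_n=\frac{\partial \Gamma}{\partial w}(w_{n-1},x^{(n)})$. Expanding the update gives
\begin{align*}
\|w_n-w\|^2=\|w_{n-1}-w\|^2-2\gamma\langle g_n,w_{n-1}-w\rangle+\gamma^2\|g_n\|^2 .
\end{align*}
We condition on $\mathcal F_{n-1}=\sigma(x^{(1)},\dots,x^{(n-1)})$. Since $x^{(n)}$ is independent of $w_{n-1}$, and $\Gamma(\cdot,x)$ is convex by Proposition~\ref{prop:boundedgrad}, the subgradient inequality gives
\begin{align*}
\Ee[\langle g_n,w_{n-1}-w\rangle\mid\mathcal F_{n-1}]\ge \Ee_x[\Gamma(w_{n-1},x)-\Gamma(w,x)]=G(w_{n-1})-G(w).
\end{align*}
Proposition~\ref{prop:boundedgrad} also gives $\|g_n\|^2\le D^2$ almost surely.

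\emph{Step 2: telescoping and averaging.} Summing the one-step inequality over $k=1,\dots,n$ yields
\begin{align*}
\frac1n\sum_{k=0}^{n-1}\Ee[G(w_k)]-G(w)\le \frac{\|w\|^2}{2\gamma n}+\frac{\gamma D^2}{2}.
\end{align*}
The function $G$ is convex as an expectation of convex functions, so Jensen's inequality bounds $\Ee[G(\bar w_n)]$ by the left-hand average. Taking the infimum over $w$ then gives \eqref{eqn:convG}.

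We need $G$ to be finite on $\cH^m$ for this to make sense. This holds because $\Gamma(w,x)$ is bounded by a continuous expression in $\|w(x)\|$ and $\mathrm{div}(Bw)(x)$, and both of these are bounded by $\|w\|_{\cH^m}$ through the reproducing property with $K\in\cC^2$. The measurability subtleties of subgradients are routine, and I would only mention them.

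\emph{Step 3: the limit.} Take $\gamma=c/\sqrt n$. The right-hand side of \eqref{eqn:convG} becomes $\inf_w\{G(w)+\|w\|^2/(2c\sqrt n)\}+cD^2/(2\sqrt n)$. For any fixed $w\in\cH^m$ this tends to $G(w)$, so $\limsup_n \Ee[G(\bar w_n)]\le\inf_{\cH^m}G$. For the matching lower bound, $\bar w_n\in\cH^m\subset (H^1)^m$ gives $\Ee[G(\bar w_n)]\ge\inf_{(H^1)^m}G$.

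It remains to show $\inf_{\cH^m}G=\inf_{(H^1)^m}G$, and this is the main obstacle: it requires continuity of $G$ for the $H^1$ norm. I would first show that $L_{c,C}(\alpha,\beta)$ is Lipschitz in $(\alpha,\beta)$ on $\alpha\ge0$, with constants uniform in $c\le\|f\|_\infty$ and $C=C_V$. The tool is Lemma~\ref{lemma2}. On the region $\beta\le c_0\alpha$ the partial derivatives are bounded. On the region $\beta>c_0\alpha$, the quantity $\alpha/\beta<1/c_0$ keeps $\alpha^{q-1}\beta^{1-q}$ bounded, and $|c-\tfrac{q-1}{q}(\alpha/\beta)^q|$ is bounded as well. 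One must handle $c=0$ separately: then $L$ is $C[\beta]_+$-like, or equals $\alpha^q\beta^{1-q}/q$ with a bounded ratio, which is again Lipschitz.

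Given this, $|\Gamma(w,x)-\Gamma(w',x)|$ is bounded by a constant times $\|w(x)-w'(x)\|+|\mathrm{div}(B(w-w'))(x)|$. Since $B$ is Lipschitz with bounded a.e. derivative, the bound \ref{hypo:regularity} yields $|G(w)-G(w')|\le C'\|w-w'\|_{H^1}$. The density $\bar\cH=H^1(\cX)$ then closes the argument.
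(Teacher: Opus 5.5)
Your proof is correct and follows essentially the same route as the paper. The bounded-subgradient SGD analysis gives \eqref{eqn:convG}: expand $\|w_n-w\|^2$, use convexity of $\Gamma(\cdot,x)$ and $\|\partial_w\Gamma\|\le D$, telescope, then apply Jensen. The limit then follows from $\gamma\propto 1/\sqrt n$, the lower bound $\bar w_n\in\cH^m\subset(H^1(\cX))^m$, and density of $\cH$ in $H^1$ combined with $H^1$-continuity of $G$. Your uniform Lipschitz bound on $L_{c,C}$ ($|\partial_\alpha L|\le(\tfrac{q}{q-1})^{\frac{q-1}{q}}c^{\frac{q-1}{q}}$, $\partial_\beta L\in[0,c]$) fills in the continuity step the paper only calls ``easy to check''. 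One small correction: for $c=0$ one simply has $L_{0,C}\equiv 0$ (let $\mu\to\infty$), so that case is trivial rather than as you describe.
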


\subsection{Finite-dimensional implementation} Similarly to the implementation of the dual, the infinite-dimensional SGD iterations~\eqref{eqn:primaliter} of the primal problem can be represented in finite-dimension. The number of coefficients to represent $w \in \cH^m$ is~$m$ times larger compared to $V \in \cH$ because~$w$ is homogeneous to a controller whereas~$V$ was a scalar-valued function. 
\begin{proposition} \label{prop:implemprimal}
  Let $N \geq 1$.  There exist a matrix $\alpha \in \Rr^{N \times m}$ and a tensor $\beta \in \Rr^{N \times d \times m}$ such that %
\begin{align*}
   \forall n \in \{1, ... , N \}, \quad w_n = \sum_{k=1}^{n} (\alpha_{k, \cdot} \otimes \Phi(x^{(k)}) ) + \sum_{k=1}^{n} \sum_{i=1}^d \left( \beta_{k, i, \cdot} \otimes \frac{\partial \Phi}{\partial x_i}(x^{(k)}) \right) \, , %
\end{align*}
where $\otimes$ denotes the outer product defined, for $v \in \Rr^m$, $h \in \cH$ by: \begin{align*}
    v \otimes h = [v_1 h, \dots, v_m h]^\top \in \cH^m \, .
\end{align*}
In particular, for any $y \in \cX$, $j \in \{1,...,d\}$ and $n \in \{1, ... , N \}$:%
\begin{align*}
    w_n(y) & = \sum_{k=1}^{n} K(x^{(k)}, y) \alpha_{k, \cdot} + \sum_{k=1}^{n} \sum_{i=1}^d \frac{\partial K}{\partial x_i}(x^{(k)}, y) \beta_{k, i, \cdot}  %
    \\
    \frac{\partial w_n}{\partial y_j}(y) &= \sum_{k=1}^{n} \frac{\partial K}{\partial y_j}(x^{(k)}, y) \alpha_{k, \cdot}+ \sum_{k=1}^{n} \sum_{i=1}^d \frac{\partial^2 K}{\partial x_i \partial y_j}(x^{(k)}, y) \beta_{k, i, \cdot}  \, . %
\end{align*}
\end{proposition}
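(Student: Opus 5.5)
The plan is to argue by induction on $n$, exactly mirroring the proof of Proposition~\ref{prop:implem}, with an explicit formula for the stochastic subgradient $\frac{\partial \Gamma}{\partial w}(w_{n-1}, x^{(n)})$.

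\textbf{Step 1: compute the subgradient.} Fix $x = x^{(n)}$ and write $\Gamma(w,x) = L_{f(x),C_V}(\alpha(w), \beta(w))$, where
\begin{align*}
\alpha(w) = \|w(x)\|_2, \qquad \beta(w) = \mu_0(x) - \tfrac{1}{\rho}{\rm div}(B(x)w(x)).
\end{align*}
By the vector-valued reproducing property, $w(x)$ is the $m$-vector of linear functionals $w_l \mapsto \langle w_l, \Phi(x)\rangle_\cH$. Expanding the divergence gives
\begin{align*}
{\rm div}(Bw)(x) = \sum_{i=1}^d \sum_{l=1}^m \Bigl(\partial_i B_{i,l}(x)\, w_l(x) + B_{i,l}(x)\, \partial_i w_l(x)\Bigr).
\end{align*}
Its gradient with respect to $w$ is therefore
\begin{align*}
\sum_{l} e_l \otimes \Bigl[ \bigl(\textstyle\sum_i \partial_i B_{i,l}(x)\bigr)\Phi(x) + \textstyle\sum_i B_{i,l}(x)\,\frac{\partial\Phi}{\partial x_i}(x) \Bigr].
\end{align*}
Whenever $w(x)\neq 0$, the gradient of $\alpha$ is $\frac{w(x)}{\|w(x)\|}\otimes\Phi(x)$. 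If $w(x)=0$, I pick the subgradient $v\otimes\Phi(x)$ with any $\|v\|\le 1$, for instance $v=0$. By the chain rule, with the partial derivatives of $L$ taken from Lemma~\ref{lemma2} (or any element of the subdifferential on the measure-zero boundary), the subgradient has the form
\begin{align*}
\frac{\partial \Gamma}{\partial w}(w_{n-1}, x^{(n)}) = a_n \otimes \Phi(x^{(n)}) + \sum_{i=1}^d b_{n,i}\otimes \frac{\partial \Phi}{\partial x_i}(x^{(n)}),
\end{align*}
for some vectors $a_n, b_{n,i}\in\Rr^m$. These vectors depend only on $w_{n-1}(x^{(n)})$, on $\partial_j w_{n-1}(x^{(n)})$, and on evaluations of $f, \mu_0, B, \nabla B$ at $x^{(n)}$. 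Concretely,
\begin{align*}
a_n &= \partial_\alpha L\,\frac{w_{n-1}(x^{(n)})}{\|w_{n-1}(x^{(n)})\|} - \frac{1}{\rho}\,\partial_\beta L\,\Bigl(\textstyle\sum_i \partial_i B_{i,\cdot}(x^{(n)})\Bigr), \\
(b_{n,i})_l &= -\frac{1}{\rho}\,\partial_\beta L\, B_{i,l}(x^{(n)}).
\end{align*}

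\textbf{Step 2: induction.} Since $w_0=0$, the update \eqref{eqn:primaliter} gives $w_n = w_{n-1} - \gamma(\cdots)$. Setting $\alpha_{n,\cdot} = -\gamma a_n$ and $\beta_{n,i,\cdot} = -\gamma b_{n,i}$ then yields the claimed expansion by induction. The coefficients with index $k<n$ are never modified, so a single matrix $\alpha$ and a single tensor $\beta$ work simultaneously for all $n\le N$.

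\textbf{Step 3: evaluation formulas.} I apply the reproducing property componentwise:
\begin{align*}
\langle \Phi(x^{(k)}), \Phi(y)\rangle_\cH &= K(x^{(k)},y), \\
\Bigl\langle \frac{\partial\Phi}{\partial x_i}(x^{(k)}), \Phi(y)\Bigr\rangle_\cH &= \frac{\partial K}{\partial x_i}(x^{(k)},y).
\end{align*}
Differentiating in $y_j$ uses the derivative reproducing property, which is valid since $K\in\cC^2$. This gives the formulas for $w_n(y)$ and $\partial_{y_j} w_n(y)$. The same formulas also close the induction computationally: they show that $w_{n-1}(x^{(n)})$ and its derivatives, and hence $a_n$ and $b_{n,i}$, are computable from the previous coefficients.

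\textbf{Main obstacle.} I expect the only delicate point to be the non-differentiable places: $w(x)=0$ for the norm, and $\beta = c_0\alpha$ in Lemma~\ref{lemma2}. There one must check that a valid subgradient can still be chosen within the span of $\Phi(x)$ and $\partial_i\Phi(x)$. This holds because every convex subgradient of $\Gamma(\cdot,x)$ is obtained as a combination of subgradients of the outer function with gradients of the inner maps. Those inner gradients all lie in $\Rr^m\otimes{\rm span}\{\Phi(x),\partial_i\Phi(x)\}$, since $\Gamma(w,x)$ depends on $w$ only through the finitely many linear functionals $w(x)$ and $\partial_i w(x)$.
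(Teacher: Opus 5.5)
Your proposal is correct and follows the paper's route. The paper's proof likewise reads off, from the subgradient formula computed in the proof of Proposition~\ref{prop:boundedgrad}, that each update adds an element of $\Rr^m\otimes\mathrm{span}\{\Phi(x^{(n)}),\partial_i\Phi(x^{(n)})\}$, and then applies the same induction and reproducing-property evaluations as in Proposition~\ref{prop:implem}. Your explicit choice of subgradient at non-differentiable points, such as $w_{n-1}(x^{(n)})=0$ (which already occurs at the first step since $w_0=0$), is a useful detail that the paper leaves implicit.
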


\begin{proof}
   This directly results from the computation of the partial derivative of~$\Gamma$ with respect to~$w$, as derived in the proof of Proposition~\ref{prop:boundedgrad}.
\end{proof}

\subsection{Link with policy evaluation} \label{sec:poleval} In this subsection, we get back to considering  generic control-affine dynamics $g(x, u) = a(x) + B(x)u$. We draw a link between the primal problem~\eqref{eqn:Ppen} and the policy evaluation problem --- to borrow the vocabulary of reinforcement learning --- aiming to evaluate the performance of a given feasible controller, as defined in~\eqref{eqn:poleval}. More specifically, for sufficiently regular controllers, their performance is upper-bounded by the value of the primal function, meaning that if the value of the primal converges to the optimum, so does the performance of the policy.

Let $u_c : \cX \to \Rr^m$ a candidate controller. We define its corresponding value function $V^{u_c}$, for any $x\in \cX$, by:
\begin{align*}
     V^{u_c}(x)  =  \rho \int_0^{+\infty} e^{-\rho t}(f(x_t)+ R(u_c(x_t)) \,  \mathrm d t   \,  \text{ where } \dot x_t = g(x_t, u_c(x_t)) \text{ and } x_0 = x \, , 
\end{align*}
so that $\int_\cX V^{u_c}(x)  \mu_0(x) \, \mathrm d x = \text{Perf}(u_c)$.
The value function $V^{u_c}$ is a viscosity solution of the following Bellman equation:
\begin{align*}
    \forall x \in \X,  \ \ - V^{u_c}(x) + f(x) +    R(u_c(x)) + \frac{1}{\rho} \nabla V^{u_c}(x) ^\top (a(x) + B(x) u_c(x)) = 0 \, .
\end{align*}
We can define primal and dual linear programs analogous to~\eqref{P} and~\eqref{D} for the policy evaluation problem:
\begin{align*} & (\text{P}(u_c)) \quad\qquad\qquad \inf_{\mu: \cX \to \Rr_+} \ \int_{\X} \left( f(x) + R(u_c(x)) \right) \mu(x) \,  \mathrm dx  \\
& \nonumber \mbox{ such that } \forall x \in \X, \ \mu_0(x) = \mu(x) + \frac{1}{\rho} {\rm div} \left(a(x) \mu(x) + B(x)  u_c(x) \mu(x) 
\right) . \\
& (\text{D}(u_c)) \quad\qquad\qquad \qquad \sup_{V: \cX \to \Rr} \ \    \int_{\cX} V(x) \mu_0(x) \, \mathrm dx \\
& \nonumber \text{ such that } \forall x \in \cX , \  -  V(x) +  f(x) + R(u_c(x)) + \frac{1}{\rho} \nabla V(x) ^\top (a(x) + B(x) u_c(x)) \geq 0 \, .
\end{align*}
Let $C_V := \|f\|_\infty + \frac{C_u^q}{q}$.
We further define the following penalized version of $\text{P}(u_c)$, and prove below that it is an exact penalty for appropriately bounded controllers. 
\begin{align*} & (\text{P-P}(u_c)) \quad \inf_{\mu: \cX \to \Rr_+} \ \int_{\X} \left[ f(x) + R(u_c(x)) \right] \mu(x) \,  \mathrm dx  \\
& \qquad\qquad\qquad~~~\quad + {C_V} \int_\cX  \left[\mu_0(x) - \mu(x) - \frac{1}{\rho} {\rm div} \Big( \left(
a(x)  + B(x) u_c(x) \right) \mu(x)  \Big) \right]_+  \mathrm dx \, . 
\end{align*}

\begin{lemma} \label{lemmapolicyeval}
Assume that $u_c : \cX \to \Rr^m$ is uniformly Lipschitz continuous. %
Then \begin{align*}
    \textnormal{val}(\textnormal{P}(u_c)) = \textnormal{val}(\textnormal{D}(u_c)) = \textnormal{Perf}(u_c) \, .
\end{align*}
    If, in addition, $u_c$ is such that $\sup_{x \in \cX} \| u_c(x) \|_2 \leq C_u$, %
    then for any $x\in \cX$, we have  $0 \leq V^{u_c}(x) \leq C_V $, and
    \begin{align*}
         \textnormal{val}(\textnormal{P-P}(u_c)) = \textnormal{val}(\textnormal{P}(u_c)) = \textnormal{Perf}(u_c) \, .
    \end{align*}
\end{lemma}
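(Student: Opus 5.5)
The proof has two layers: the uncontrolled LP duality for the closed-loop system, then exactness of the penalty. For the first part, set $g(x) := a(x) + B(x)u_c(x)$. By \ref{hypo:regularity} and the Lipschitz continuity of $u_c$, $g$ is Lipschitz, so the closed-loop flow is well defined and unique on the torus. Let $\nu^{u_c}$ be the $\rho$-discounted occupation measure of $\dot x = g(x)$ with $x(0)\sim\mu_0$. Lemma~\ref{lemma1} shows that $\nu^{u_c}$ is feasible for (P$(u_c)$). By Fubini,
\begin{align*}
\int_\cX (f+R(u_c))\,\mathrm d\nu^{u_c} = \int_\cX V^{u_c}\,\mathrm d\mu_0 = \textnormal{Perf}(u_c),
\end{align*}
so $\textnormal{val}(\textnormal{P}(u_c)) \le \textnormal{Perf}(u_c)$.

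Weak duality is routine. For any feasible $\mu$ and any $C^1$ feasible $V$, test the Liouville equation against $V$ and use the inequality constraint together with $\mu\ge0$; this gives $\int V\mu_0 \le \int (f+R(u_c))\mu$, i.e. $\textnormal{val}(\textnormal{D}(u_c))\le\textnormal{val}(\textnormal{P}(u_c))$.

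To close the chain we need $\textnormal{val}(\textnormal{D}(u_c)) \ge \textnormal{Perf}(u_c)$. The natural candidate is $V^{u_c}$, which is Lipschitz (Gronwall on the flow, if $\rho$ dominates the Lipschitz constant of $g$) but in general not $C^1$. I would therefore regularise. One option is mollification: take $V_\e = V^{u_c}*\eta_\e$ and subtract a small constant $\delta(\e)\to0$ so that the constraint holds. The commutator between convolution and the Lipschitz transport term is controlled DiPerna--Lions style, with an $L^\infty$ error that vanishes. The other option is to reuse the comparison/approximation argument behind Theorem~\ref{thm_duality}, specialised to the singleton control set $\{u_c(x)\}$, where the proof carries over verbatim. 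I would take the latter as the main route and the mollification as the explicit fallback. This step is the main obstacle: enforcing the constraint uniformly, rather than only a.e., after regularisation, and coping with a possibly poor Lipschitz constant of $V^{u_c}$ when $\rho$ is small. These are exactly the issues already settled in Theorem~\ref{thm_duality}.

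For the second part, the bound $0\le V^{u_c}\le C_V$ is immediate, as in Proposition~\ref{prop:V*_bounded}: the running cost satisfies $0\le f + R(u_c)\le \|f\|_\infty + C_u^q/q = C_V$, and $\rho\int_0^\infty e^{-\rho t}\,\mathrm dt = 1$. The exact penalty then follows the argument used for \eqref{eqn:Ppen0}. Using strong duality from the first part, write (P$(u_c)$) as an inf--sup over the multiplier $V$. Since the dual optimum $V^{u_c}$ takes values in $[0,C_V]$, restricting the multiplier to $V:\cX\to[0,C_V]$ leaves the value unchanged. The inner supremum over such $V$ evaluates pointwise to $C_V[\,\mu_0-\mu-\rho^{-1}{\rm div}(g\mu)\,]_+$, which is (P-P$(u_c)$).

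For rigour, the easy inequality is $\textnormal{val}(\textnormal{P-P}(u_c)) \le \textnormal{val}(\textnormal{P}(u_c))$, since the penalty vanishes on feasible points. For the reverse inequality, take any $\mu\ge0$ and write the residual as $\mu_0 - \mathcal{L}^*\mu = s$. Test against a $C^1$ approximation $V$ of $V^{u_c}$ with values in $[0,C_V]$ that is nearly dual-feasible. This gives
\begin{align*}
\int (f+R(u_c))\mu + C_V\int s_+ \;\ge\; \int (f+R(u_c))\mu + \int V s \;\ge\; \int V\mu_0 - o(1),
\end{align*}
which tends to $\textnormal{Perf}(u_c)$.
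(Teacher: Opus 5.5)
Your proposal is correct, and its two substantive ingredients match the paper's. The bound $\textnormal{val}(\textnormal{D}(u_c))\ge\textnormal{Perf}(u_c)$ is step (1.iii) of the proof of Theorem~\ref{thm_duality}, run on the closed-loop system. The exact penalty rests, as in the paper, on confining the multiplier to $[0,C_V]$.

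You differ from the paper in how you close the chain. The paper rewrites the closed loop as a control system with $\tilde B\equiv 0$ and cost $f+R(u_c)$, then applies Theorem~\ref{thm_duality} as a black box. That brings in the compact-control strong-duality step (2.i), the comparison principle of step (2.ii), and an ad hoc substitute for Assumption~\ref{hypo:alpha}, which fails when $\tilde B\equiv 0$. You instead use that, since $u_c$ is Lipschitz, the closed-loop occupation measure of Lemma~\ref{lemma1} is an explicit feasible point of (P$(u_c)$) with cost exactly $\textnormal{Perf}(u_c)$. The sandwich $\textnormal{Perf}(u_c)\le\textnormal{val}(\textnormal{D}(u_c))\le\textnormal{val}(\textnormal{P}(u_c))\le\textnormal{Perf}(u_c)$ then needs only weak duality plus the approximation step, and strong duality comes as a by-product. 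This is more elementary and self-contained; the paper's version is shorter only because Theorem~\ref{thm_duality} is already available.

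Likewise, your direct test-function bound for (P-P$(u_c)$) is more rigorous than the paper's formal inf--sup exchange, which treats the non-$C^1$ function $V^{u_c}$ as a dual optimum. Two small points to tidy there. Your $o(1)$ is $\delta\int\mu$, so let $\delta\to 0$ for each fixed $\mu$ before taking the infimum. Also, (P$(u_c)$) is posed over densities, which is fine because the flow maps are bi-Lipschitz and $\mu_0$ has a density.

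Finally, your worry about small $\rho$ in the mollification fallback is unnecessary. With $g$ Lipschitz and $V^{u_c}$ satisfying the transport equation in the distributional sense (which follows from the flow representation), the commutator is bounded by a constant times the modulus of continuity of $V^{u_c}$ at scale $\e$. It therefore vanishes uniformly without any Lipschitz bound on $V^{u_c}$.
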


\begin{remark}
    We recall that $C_u$ is an upper-bound on $\sup_{x\in \cX} \|u^*(x)\|_2$.
    $C_V$ is chosen to ensure that the boundedness assumption holds for $u^*$. However, potential discontinuities of controllers are a common phenomenon in optimal control~\cite{kamien2012dynamic}, as in the simple example presented in Section~\ref{subsec:ex}.%
\end{remark}

Let us denote by $J$ the objective function of the optimization problem \eqref{eqn:Ppen}: \begin{align} \label{eqn:J}
        J(w, \mu) &:=  \int_{\X} \left[ f(x) + R\left( \frac{w(x)}{\mu(x)} \right) \right] \mu(x) \,  \mathrm dx \\
     \nonumber   &\qquad+ C_V \int_\cX  \left[\mu_0(x) - \mu(x) - \frac{1}{\rho} {\rm div} \left(
a(x) \mu(x) + B(x) w(x) \right) \right]_+  \mathrm dx \, .
\end{align}
The following proposition shows that the performance suboptimality of the controller obtained from $w/\mu$ can be controlled by the suboptimality of the primal objective~$J(w, \mu)$, under the regularity conditions of Lemma~\ref{lemmapolicyeval}.

\begin{proposition} \label{prop:subopt}
Consider a candidate pair $(w, \mu)$, where $\mu: \cX \to \Rr_+$ and ${w:\cX \to \Rr^m}$. If $(w, \mu)$ is such that $u_c := w/\mu$ is continuously differentiable and $\sup_{x\in\cX} \|u_c(x)\|_2 \leq C_u$,  then 
\begin{align*}
    \textnormal{Perf}(u_c) - \textnormal{Perf}(u^*) \leq J(w, \mu) - J(w^*, \mu^*) \, ,
\end{align*}
where $\mu^*$ is the optimal occupation measure and $w^*$ is defined as in \eqref{eqn:wstar}. 
\end{proposition}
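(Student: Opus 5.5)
Two facts suffice: first, $J(w,\mu) \geq \textnormal{val}(\textnormal{P-P}(u_c)) = \textnormal{Perf}(u_c)$; second, $J(w^*,\mu^*) = \textnormal{Perf}(u^*)$. Subtracting the second from the first gives the claimed inequality.

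\textbf{Step 1: rewrite $J(w,\mu)$ as the objective of $\textnormal{P-P}(u_c)$.} Since $u_c = w/\mu$, we have $w = \mu u_c$ wherever $\mu>0$. This gives
\[
R\!\left(\tfrac{w}{\mu}\right)\mu = R(u_c)\mu
\quad\text{and}\quad
a\mu + Bw = (a+Bu_c)\mu .
\]
Hence $J(w,\mu)$ is exactly the objective of $\textnormal{P-P}(u_c)$ evaluated at the feasible point $\mu\geq 0$, so $J(w,\mu) \geq \textnormal{val}(\textnormal{P-P}(u_c))$.

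\textbf{Step 2: identify this value with $\textnormal{Perf}(u_c)$.} By hypothesis $u_c$ is continuously differentiable on the compact torus, hence uniformly Lipschitz, and it satisfies $\sup_x\|u_c(x)\|\leq C_u$. So the second part of Lemma~\ref{lemmapolicyeval} applies and gives $\textnormal{val}(\textnormal{P-P}(u_c)) = \textnormal{Perf}(u_c)$. Combining with Step 1 yields $J(w,\mu) \geq \textnormal{Perf}(u_c)$.

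\textbf{Step 3: compute $J(w^*,\mu^*)$.} Take $w^* = \mu^* u^*$ as in \eqref{eqn:wstar}. By Proposition~\ref{prop:optimality}, the pair $(\mu^*,u^*)$ satisfies the discounted Liouville equation, so the penalty term of $J$ vanishes. What remains is
\[
J(w^*,\mu^*) = \int_\cX \bigl(f + R(u^*)\bigr)\,\mu^*\,\mathrm dx ,
\]
which is the primal cost of the optimal occupation measure, i.e.\ $\textnormal{val}\eqref{P}$ by Lemma~\ref{lem:primal_on_proba}. Theorem~\ref{thm_duality} then gives $\textnormal{val}\eqref{P} = \int_\cX V^*\mu_0\,\mathrm dx = \textnormal{Perf}(u^*)$. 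Alternatively, the exact-penalty equivalence of \eqref{P} with \eqref{eqn:Ppen} (valid since $C_V\geq\|f\|_\infty$) shows directly that $J(w^*,\mu^*)$ is the optimal value.

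\textbf{Main obstacle.} The delicate point is the set where $\mu=0$ in Step 1, where $u_c = w/\mu$ is a priori undefined. I would handle it with the perspective-function convention: set $R(w/\mu)\mu = 0$ if $w=0$ and $+\infty$ otherwise. Either $J=+\infty$ and the bound is trivial, or $w=0$ on that set, so that $w=\mu u_c$ holds everywhere for the continuous extension of $u_c$, and the identity of Step 1 holds in the distributional sense. A secondary check is that in Step 3 $u^*$ is only $L^\infty$, not Lipschitz, so Lemma~\ref{lemmapolicyeval} cannot be invoked there. This is why Step 3 relies on Theorem~\ref{thm_duality} (or on the value of the penalized primal) rather than on policy evaluation.
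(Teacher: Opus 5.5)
Your proof is correct and follows the paper's argument. You rewrite $J(w,\mu)$ as the objective of $(\textnormal{P-P}(u_c))$ at the feasible point $\mu$, bound it below by $\textnormal{val}(\textnormal{P-P}(u_c)) = \textnormal{Perf}(u_c)$ via Lemma~\ref{lemmapolicyeval}, and use $J(w^*,\mu^*) = \textnormal{Perf}(u^*)$, which the paper obtains exactly as in your ``alternative'' route (optimality of $(w^*,\mu^*)$ for the exact-penalty problem \eqref{eqn:Ppen}). One small correction to your primary route in Step 3: the identity $\int_\cX (f+R(u^*))\mu^*\,\mathrm dx = \textnormal{Perf}(u^*)$ comes directly from the definition of the discounted occupation measure, not from Lemma~\ref{lem:primal_on_proba}, since that lemma alone does not identify $\mu^*\delta_{u^*}$ as a minimizer of \eqref{P}.
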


Contrary to the dual approach, which optimizes $V$ but gives little guarantee on the resulting controller derived from $\nabla V$, the primal objective optimizing $(w, \mu)$ is directly linked to the performance of the derived policy $w/\mu$. Throughout  primal  (and primal-dual, as defined in Section~\ref{sec:primaldual}) optimization, evaluating $J$ can be used as a performance metric, providing a stopping criterion without having to directly simulate the performance of the current policy. 

A limitation of Proposition~\ref{prop:subopt} is the double requirement of  boundedness and smoothness for the candidate controller. When solving~(P($\cH$)), each iterate $w \in \cH^m$ is continuously differentiable, hence uniformly bounded; but $\mu$ is obtained according to Lemma~\ref{lemma2} by 
\begin{align*}
    \mu(x) = \max \left\{ \left(\frac{q-1}{q}\right)^{1/q} f(x)^{-1/q} \|w(x)\|_2, ~ \mu_0(x) - \frac{1}{\rho} \text{div}(B(x)w(x)) \right\} \, ,
\end{align*}
which  might touch zero if $w(x)=0$,
and
cannot be guaranteed to be smooth with only Assumption~\ref{hypo:regularity}. %
This issue could be fixed by adding a strongly convex regularization, and bounding $\mu$ away from zero in Lemma~\ref{lemma2}, at the cost of an additional bias. Another option is  to evaluate a smoothed\footnote{Smoothing can be achieved by computing $\tilde \mu_\epsilon (x) = \Ee[ \mu(x + \epsilon y)] $, with $y \sim \cN(0, I_d)$ and $\epsilon>0$ a small parameter. In our practical implementations, we did not find this smoothing to be necessary.
} and clipped version $\tilde \mu_\epsilon$ instead of $\mu$.%

\subsection{Numerical example} We use the same example as in Section~\ref{subsec:ex}, whose simple dynamics $\dot x = u$ is such that $a(x) \equiv 0$. We implement the iterations~\eqref{eqn:primaliter}, with the same RKHS as above, and compute the averaged iterates $\bar w_n$ with $\rho=2$, $C_V = 2$  --- so that $C_V \geq \|f\|_\infty+\sup_{x \in \cX}\|u^*(x)\|_2^2/2$ and Proposition~\ref{prop:subopt} holds --- and constant step size $\gamma=2 \times 10^{-3}$.  The results are shown in Figures~\ref{fig:primalperf} and~\ref{fig:primal}. 

In Figure~\ref{fig:primalperf}, we see that $G(\bar w_n)$ converges from above to the optimal value $\int_\cX V^* \mathrm d\mu_0$, and in Figure~\ref{fig:primal} that $\bar w_n$ precisely approximates~$w^*$, which is smoother than $V^*$ in this example. At each time step, a candidate controller $u_n$ can be derived from the current $\bar w_n$ by first computing $\mu_n$ using Lemma~\ref{lemma2}, and taking $u_n = \bar w_n / \mu_n$.  As we see in Figure~\ref{fig:primalperf}, this controller is near-optimal after only 5k iterations (vs around 10-15k iterations in the dual formulation). But more importantly, computing the primal function $G$ at a given $\bar w_n$ provides an explicit control over the performance of the corresponding controller, without having to evaluate it with trajectory simulations.  Note that projecting and smoothing the iterates~$u_n$ was not necessary in this experiment, as their magnitude did not grow. In fact, as can be seen in Figure~\ref{fig:primalperf}, in this primal example where $\mu_n$ is optimal for a given $\bar w_n$, $G$ is not only an upper-bound but also a good estimator of $\text{Perf}(u_n)$.   For this reason, the primal might be considered better suited than the dual for optimizing the policy.

\begin{figure}
    \centering
    \includegraphics[width=0.45\linewidth]{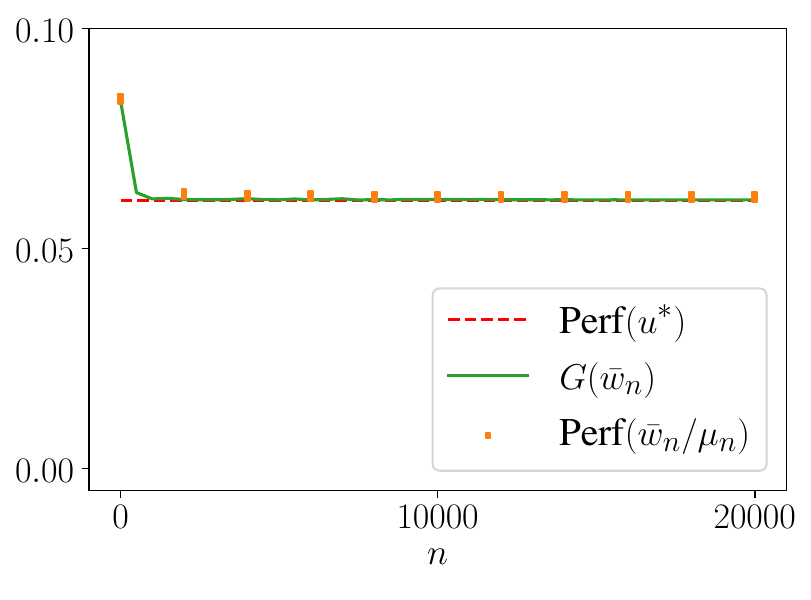} \vspace{-0.4cm}
    \caption{Evolution of the primal cost, along with the performance of the induced controller, over the iterations. Note that the performance is evaluated up to some finite precision, due to discrete-time approximations in the simulated trajectories.}
    \label{fig:primalperf}
\end{figure}

\begin{figure}
    \centering
    \includegraphics[width=0.98\linewidth]{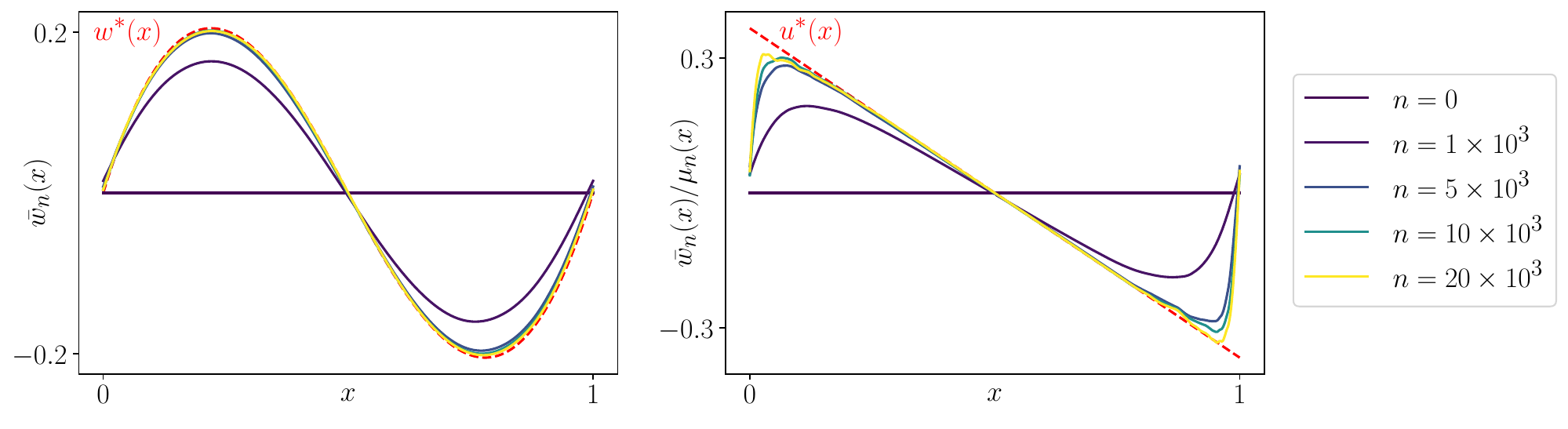} \vspace{-0.3cm}
    \caption{$\bar w_n$ obtained from iterations~\eqref{eqn:primaliter}, and the corresponding candidate controller obtained from $\bar w_n/\mu_n$, where $\mu_n$ is computed using Lemma~\ref{lemma2}, are plotted for growing values of~$n$, along with $w^*$ and $u^*$ in red dotted lines.}
    \label{fig:primal}
\end{figure}

\section{The primal-dual problem} \label{sec:primaldual} The primal formulation has some advantages over the dual one, yet, it is restricted to dynamics such that $a(x) \equiv 0$. In this section, we develop a primal-dual approach which generalizes the primal approach to any control-affine dynamics.

\subsection{Towards a tractable primal-dual formulation}

Starting from the primal problem~\eqref{P}, using strong duality and  the variable $w(x) = \int_{\Rr^m} \mu(x, u)u\, \mathrm du $ as defined in~\eqref{eqn:wdef}, such that at optimality $w^*=u^* \mu^*$, we can define the equivalent primal-dual problem:
\begin{align}
\label{eqn:pd0} \sup_{V \in \cC^1(\cX)} \ \ & \inf_{\mu: \X  \to \rb_+, \ w: \X \to \rb^m }  \ \int_{\X}  \Big[ f(x) +  R \Big( \frac{ w(x)}{\mu(x)} \Big) \Big] \mu(x) \, \mathrm dx
\\
&  ~~+   \int_\X V(x) \left( \mu_0(x) - \mu(x) - \frac{1}{\rho} {\rm div} ( \mu(x) a(x) + B(x)w(x)) \right) \mathrm  dx \, . \nonumber
\end{align}
As previously noted in Section~\ref{sec:primalstoch} for problem~\eqref{eqn:Ppen}, this formulation is not separable in $\mu(x)$, hence not directly compatible with SGD, except if $a(x) \equiv 0$. A simple manipulation, enabled by the introduction of the dual variable~$V$, will solve this issue. Indeed, for periodic   differentiable functions $v:\cX \to \Rr$ and $z:\cX \to \Rr^d$ on the torus, Green's first identity gives
\begin{align} \label{eqn:green}
     \int_\cX v(x) \ \textnormal{div}(z(x)) \, \mathrm dx = - \int_\cX \nabla v(x)^\top z(x) \, \mathrm dx \, .
\end{align}
Applying~\eqref{eqn:green} in~\eqref{eqn:pd0}, we then obtain the more tractable primal-dual formulation:
\begin{align} \label{eqn:PD}
\sup_{V \in \cC^1(\cX)}  \inf_{\mu: \X  \to \rb_+, \ w: \X \to \rb^m } \ \int_{\X} & \Big[ f(x) +  R \Big( \frac{ w(x)}{\mu(x)} \Big) \Big] \mu(x) \, \mathrm  dx
+   \int_\X V(x) \left( \mu_0(x) - \mu(x) \right) \mathrm dx \notag \\ &\qquad\qquad\quad+ \frac{1}{\rho} \int_\cX \nabla V(x)^\top \left(  \mu(x) a(x) + B(x)w(x) \right) \mathrm dx \, . \tag{PD}
\end{align}
As a sanity check, the minimum with respect to $\mu$ and $w$ is separable in $x$, and can then be obtained in closed form through the following lemma. 
\begin{lemma}[Fenchel conjugate of perspective functions] Let $\varphi: \rb^m \to \Rr$ a convex function and let $(\alpha, \beta) \in \Rr^m \times \Rr$. Then:
\begin{align*}
\inf_{t \geqslant 0, \ v \in \rb^m} \ t \varphi \big( \frac{v}{t} \big) + \alpha^\top v + \beta t = 0  \mbox{ if } \varphi^\ast(-\alpha) \leqslant \beta \mbox{ and } -\infty \mbox{ otherwise}.
\end{align*}
\end{lemma}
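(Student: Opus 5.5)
The plan is to split the infimum over $t$ into the two cases $t>0$ and $t=0$, handling the latter through the recession function of $\varphi$ (equivalently, by lower semicontinuity of the perspective). Because the statement concerns a general convex $\varphi$, I will use the standard convention that the perspective at $t=0$ is defined as the lower semicontinuous extension $0\cdot\varphi(v/0):=\varphi^\infty(v)=\sup_{w\in\mathrm{dom}\,\varphi^*}w^\top v$. For $\varphi=R$ this is $0$ at $v=0$ and $+\infty$ otherwise, which matches the natural convention in \eqref{eqn:PD}. Throughout I use only the definition of the convex conjugate, $\varphi^*(y)=\sup_z\{y^\top z-\varphi(z)\}$.

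First, fix $t>0$ and optimize over $v$. Writing $v=tz$, the objective is $t\bigl(\varphi(z)+\alpha^\top z+\beta\bigr)$. Its infimum over $z\in\Rr^m$ is
\[
t\Bigl(\beta-\sup_z\{(-\alpha)^\top z-\varphi(z)\}\Bigr)=t\bigl(\beta-\varphi^*(-\alpha)\bigr).
\]
This gives two cases.

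If $\varphi^*(-\alpha)>\beta$, which includes the case $\varphi^*(-\alpha)=+\infty$, then the expression $t(\beta-\varphi^*(-\alpha))$ is $-\infty$ or tends to $-\infty$ as $t\to+\infty$. Hence the overall infimum is $-\infty$.

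If $\varphi^*(-\alpha)\le\beta$, then for every $t>0$ the inner infimum is $t(\beta-\varphi^*(-\alpha))\ge0$, and it tends to $0$ as $t\to0^+$. So the infimum over $t>0$ equals $0$.

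It remains to check that the boundary $t=0$ does not go below $0$. Here the objective is $\varphi^\infty(v)+\alpha^\top v$. From $\varphi^*(-\alpha)\le\beta<\infty$ we get $-\alpha\in\mathrm{dom}\,\varphi^*$. Therefore $\varphi^\infty(v)\ge(-\alpha)^\top v$, so the objective is nonnegative, with the value $0$ attained at $v=0$. This boundary case is the only delicate step, since its validity depends entirely on the convention adopted at $t=0$; with the lower semicontinuous closure it goes through. For real-valued $\varphi$ and the specific $R$ used in the paper, the objective at $t=0$ reduces to $0$ at $v=0$ and $+\infty$ elsewhere, so the check is immediate.

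Combining both cases gives the claimed value: $0$ if $\varphi^*(-\alpha)\le\beta$ and $-\infty$ otherwise. Applied pointwise in $x$ with $\varphi=R$, $\alpha=\rho^{-1}B(x)^\top\nabla V(x)$ and $\beta=f(x)-V(x)+\rho^{-1}\nabla V(x)^\top a(x)$, this recovers the HJB inequality constraint of \eqref{D}, which serves as the sanity check announced before the lemma.
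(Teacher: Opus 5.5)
Your proof is correct. The paper gives no proof of this lemma and quotes it as a standard convex-analysis fact. That fact is the following: the perspective $g(v,t)=t\varphi(v/t)$ ($t>0$) has conjugate $g^*(a,b)=\sup_{t>0}t\,(b+\varphi^*(a))$, which is the indicator of $\{(a,b):\ b+\varphi^*(a)\le 0\}$, and the infimum in question equals $-g^*(-\alpha,-\beta)$. Your substitution $v=tz$ for $t>0$ is exactly the computation of this conjugate, so your route is the standard one.

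The conjugate viewpoint also makes your boundary step automatic. The lower semicontinuous closure of $g$ has the same conjugate as $g$, so adding its values at $t=0$ (your $\varphi^\infty(v)$) cannot lower the infimum. Your direct check, $\varphi^\infty(v)\ge(-\alpha)^\top v$ once $-\alpha\in\mathrm{dom}\,\varphi^*$, is an equally valid way to see this. You are right that a convention at $t=0$ is genuinely needed: with $0\cdot\varphi(v/0):=0$ for every $v$, the value $0$ would be lost as soon as $\alpha\neq 0$.

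One small inaccuracy: the $t=0$ objective reduces to ``$0$ at $v=0$, $+\infty$ elsewhere'' only for superlinear $\varphi$ such as $R=\|\cdot\|^q/q$ with $q>1$, not for every real-valued convex $\varphi$. For instance, $\varphi=\|\cdot\|$ has $\varphi^\infty=\|\cdot\|$. This does not affect your argument, which only uses the lower bound on $\varphi^\infty$.
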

We thus obtain the constrained problem
\begin{align*}
&\sup_{V \in \cC^1(\cX)}  \     \int_\X V(x) \mu_0(x) \, \mathrm dx \mbox{~~~ such that } \\
& \qquad \forall x \in \X, \ R^\ast
\left( - \frac{1}{\rho} B(x)^\top \nabla V(x) \right)  \leqslant f(x) -  V(x) + \frac{1}{\rho} \nabla V(x)^\top a(x) \, ,
\end{align*}
which is exactly the dual problem~\eqref{D}.  Alternatively, starting from~\eqref{eqn:PD}, if we minimize with respect to $\mu$ only, the problem is also separable in $x$, and reduces to
\begin{align}
  \label{eqn:pdk}  & & \sup_{V \in \cC^1(\cX)} \ \inf_{w: \X \to \rb^d } ~& 
   \int_\X V(x)  \mu_0(x) \, \mathrm dx +  \frac{1}{\rho}\int_\X \nabla V(x)^\top B(x) w(x) \, \mathrm dx  \\
   & & & + \int_{\X}  S\left(w(x), f(x) -   V(x) + \frac{1}{\rho} \nabla V(x)^\top a(x) \right)  \mathrm dx \, , \nonumber
\end{align}
where 
\begin{align*}
    \forall  (\alpha, \beta) \in \Rr^m \times \Rr, \quad  \ds S(\alpha, \beta) :=
\inf_{\mu \geqslant 0 } \Big\{\beta  \mu  +  R \Big( \frac{\alpha}{\mu} \Big) \mu \Big\} \, .
\end{align*}
One can easily check that $S$ is finite if and only if $\beta \geq 0$. In~\eqref{eqn:pdk}, this imposes the constraint $ f(x) -   V(x) + \rho^{-1} \nabla V(x)^\top a(x)\geqslant 0$ on $V$, for all $x\in \cX$. While the constraint is feasible (consider, \textit{e.g.}, the value function corresponding to the controller $u \equiv 0$, for which the constraint is equal to zero everywhere), %
enforcing this infinite-dimensional constraint for each iterate of a stochastic optimization algorithm is not straightforward. %

If Assumption~\ref{hypo:mustarbounded} holds, which we assume from now on, $\forall x\in\cX$, $\mu^*(x) \leq M$, and we can replace $S$ in~\eqref{eqn:pdk} by $S_M$ defined by:
\begin{align}
    \forall  (\alpha, \beta) \in \Rr^m \times \Rr, \quad  \ds  S_M(\alpha, \beta) =
\inf_{\mu \in [0, M]} \Big\{\beta  \mu  +  R \Big( \frac{\alpha}{\mu} \Big) \mu \Big\} \, .
\end{align}
Assumption~\ref{hypo:mustarbounded} has a regularization effect: $S_M$ is a smoothed version of $S$, and removes the strong constraint on $V$. $S_M$ and its derivatives can be computed in closed form as follows. In particular, $S_M$ remains finite for any $\beta \in \Rr$.

\begin{lemma} \label{lemma3} Let $(\alpha, \beta) \in \Rr^m \times \Rr$. Then
\begin{align*}
    S_M(\alpha, \beta) = 
\left\{\begin{array}{ll}
 \beta M + \frac{\|\alpha\|^q_2}{q} M^{1-q} & \mbox{ with }~ \mu^\ast =  M ~~~ \mbox{ if } ~ \beta \leq \frac{q-1}{q} \frac{\| \alpha\|^q_2}{M^q}\\
\left( \frac{q}{q-1}\right)^{\frac{q-1}{q}} \beta^{\frac{q-1}{q}} \| \alpha\|_2 & \mbox{ with }~  \mu^\ast =  \left(\frac{q-1}{q}\right)^{1/q} \beta^{-1/q} \|\alpha\|_2 ~~ \mbox{ else} \, .\end{array}\right.
\end{align*} 
Moreover if $\alpha \neq 0$, we have
\begin{align*}
    \nabla_\alpha  S_M (\alpha, \beta) = \|\alpha\|_2^{q-2} \alpha {(\mu^*(\alpha, \beta))}^{1-q} \quad\text{~ and ~}\quad
    \frac{\partial  S_M}{\partial \beta}(\alpha, \beta) = \mu^*(\alpha, \beta)  \, ,
\end{align*}
where $\mu^*(\alpha, \beta) := \mathbf{1}_{\beta \leq 0} M + \mathbf{1}_{\beta > 0}\min \left\{\left(\frac{q-1}{q}\right)^{1/q} \beta^{-1/q} \|\alpha\|_2 \, , \,  M \right\} \in [0, M]$. If $\alpha = 0$, one can select the subgradient $ 0 \in \partial_\alpha S_M(\alpha, \beta)$ and supergradient $M \in \partial_\beta^+ S_M(\alpha, \beta)$.

\end{lemma}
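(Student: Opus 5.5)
The plan is to treat $S_M(\alpha,\beta)$ as a one-dimensional minimization over the compact interval $[0,M]$, solve it explicitly by convexity, and then get the derivatives from an envelope (Danskin-type) argument. Since $R(u)=\|u\|^q/q$, the objective is
\begin{equation*}
h_{\alpha,\beta}(\mu)=\beta\mu+\frac{\|\alpha\|_2^q}{q}\,\mu^{1-q},\qquad \mu\in(0,M].
\end{equation*}
At $\mu=0$ I use the usual perspective convention: the value is $+\infty$ if $\alpha\neq0$ and $0$ if $\alpha=0$. For $\alpha\neq 0$ and $q>1$, the function $\mu\mapsto\mu^{1-q}$ is strictly convex on $(0,\infty)$, so $h_{\alpha,\beta}$ is strictly convex and blows up at $0^+$. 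Hence it has a unique minimizer on $(0,M]$, with
\begin{equation*}
h'_{\alpha,\beta}(\mu)=\beta-\frac{q-1}{q}\|\alpha\|_2^q\,\mu^{-q}.
\end{equation*}

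\textbf{Step 1 (the two regimes).} If $\beta\le\frac{q-1}{q}\|\alpha\|_2^qM^{-q}$, then $h'_{\alpha,\beta}(\mu)\le h'_{\alpha,\beta}(M)\le0$ on $(0,M]$. So $h$ is nonincreasing there, $\mu^*=M$, and $S_M=\beta M+\frac{\|\alpha\|_2^q}{q}M^{1-q}$. Otherwise $\beta>0$, and the stationary point $\bar\mu=\left(\frac{q-1}{q}\right)^{1/q}\beta^{-1/q}\|\alpha\|_2$ lies in $(0,M)$, so it is the minimizer. To evaluate $h(\bar\mu)$ I substitute the stationarity relation $\|\alpha\|_2^q\bar\mu^{-q}=\frac{q\beta}{q-1}$. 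The second term then becomes $\beta\bar\mu/(q-1)$, so $h(\bar\mu)=\frac{q}{q-1}\beta\bar\mu=\left(\frac{q}{q-1}\right)^{\frac{q-1}{q}}\beta^{\frac{q-1}{q}}\|\alpha\|_2$. Both cases are captured by the unified formula $\mu^*(\alpha,\beta)=\mathbf 1_{\beta\le0}M+\mathbf 1_{\beta>0}\min\{\bar\mu,M\}$. I would also check that the two branches agree on the threshold curve, which gives continuity of $S_M$. For $\alpha=0$ we simply have $S_M(0,\beta)=\min\{\beta M,0\}$.

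\textbf{Step 2 (gradients for $\alpha\neq0$).} This is the step needing the most care. The integrand is singular at $\mu=0$, so I would first localize. On a neighbourhood of a fixed $(\alpha,\beta)$ with $\alpha\neq0$, the minimizers stay in some $[\delta,M]$ with $\delta>0$; this follows from the explicit formula, which is bounded below by a positive constant when $\|\alpha\|_2$ is bounded away from $0$ and $\beta$ is bounded above. On $[\delta,M]\times U$ the integrand is $C^1$ in $(\alpha,\beta)$, jointly continuous, and the minimizer is unique. Danskin's theorem therefore gives differentiability of $S_M$, with gradient equal to the partial gradient of the integrand evaluated at $\mu^*$. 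This yields $\nabla_\alpha S_M=\|\alpha\|_2^{q-2}\alpha\,(\mu^*)^{1-q}$ and $\partial_\beta S_M=\mu^*$. As a consistency check, $S_M$ is concave in $\beta$, being an infimum of affine functions, and convex in $\alpha$, being an infimum over $\mu$ of the jointly convex perspective plus a term not depending on $\alpha$.

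\textbf{Step 3 ($\alpha=0$).} Since $h_{\alpha,\beta}(\mu)\ge\beta\mu$ for every $\mu$, we get $S_M(\alpha,\beta)\ge S_M(0,\beta)$. Hence $0\in\partial_\alpha S_M(0,\beta)$. For $\beta$, the function $\beta\mapsto\min\{\beta M,0\}$ is concave with superdifferential $\{M\}$ for $\beta<0$, $[0,M]$ at $\beta=0$, and $\{0\}$ for $\beta>0$. So $\mu^*(0,\beta)$, which equals $M$ when $\beta\le0$ and $0$ when $\beta>0$, is always a valid supergradient.

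The statement's choice of $M$ as a supergradient at $\alpha=0$ is therefore correct only when $\beta\le 0$. For $\beta>0$ it fails: with $\beta'<\beta$ the required inequality $S_M(0,\beta')\le S_M(0,\beta)+M(\beta'-\beta)$ reads $\min\{\beta' M,0\}\le M(\beta'-\beta)$, which is false for $0\le\beta'<\beta$. I would state the selection as $\mu^*(0,\beta)$ (equivalently $M$ when $\beta\le 0$ and $0$ when $\beta>0$) rather than $M$ unconditionally.
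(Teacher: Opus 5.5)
Your proof is correct. The paper gives no proof of Lemma~\ref{lemma3}; the closest argument is the proof of Lemma~\ref{lemma2}, which does the same one-variable minimization (monotone below the threshold, interior stationary point above it). Your Steps~1 and~3 are therefore essentially the intended argument.

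For the gradient formulas you localize the minimizer in $[\delta,M]$ and apply Danskin's theorem. The alternative is to differentiate the two closed-form branches directly and check that they agree on the curve $\beta=\frac{q-1}{q}\|\alpha\|_2^qM^{-q}$, where both give $\nabla_\alpha S_M=\|\alpha\|_2^{q-2}\alpha M^{1-q}$ and $\partial_\beta S_M=M$. Either route is fine.

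Your objection to the final clause is also justified. For $\alpha=0$, $S_M(0,\beta)=\min\{\beta M,0\}$ vanishes identically on $(0,+\infty)$. Hence $\partial^+_\beta S_M(0,\beta)=\{0\}$ for $\beta>0$, and $M$ is not a supergradient there.

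The lemma's own formula for $\mu^*$ already gives $\mu^*(0,\beta)=0$ in that regime. The consistent statement is therefore $\mu^*(0,\beta)\in\partial^+_\beta S_M(0,\beta)$, together with $0\in\partial_\alpha S_M(0,\beta)$, which holds for every $\beta$.

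This correction does not affect Proposition~\ref{prop:projection}, which only uses $\partial_\beta S_M\in[0,M]$. It does matter for the mirror-prox scheme: it is what makes the stochastic oracle a genuine sub/supergradient selection when $w(x)=0$. This case actually arises, e.g.\ at the initialization $w\equiv 0$, $V\equiv 0$, wherever $f(x)>0$.
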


\begin{remark}
    Contrary to the inner minimization function $L$ appearing in the primal, $S_M$ does not have uniformly bounded derivatives. This additional difficulty, along with the bilinear terms appearing in the objective of~\eqref{eqn:PD}, will require the use of projections of the primal and dual iterates in the next section.
\end{remark}

\subsection{Stochastic optimization on the primal-dual} Similarly to the dual and primal approaches, we represent both the primal $w:\cX \to \Rr^m$ and dual variable $V:\cX \to \Rr$ as elements of the RKHS~$\cH$.  Starting from \eqref{eqn:pdk}, we can build the following stochastic optimization problem, which is convex-concave in $(w, V)$:
\begin{align*}  (\text{PD}(\cH)) \quad~~\,&\qquad\qquad\quad
\sup_{V \in \cH} \ \inf_{w\in \cH^m} \ \ \Ee_{x \sim \cU(\cX)} ~ \Big\{ \Psi(V, w, x) \Big\} \, \, , \hspace{10em}   \\
&\text{where ~~~\quad}   \Psi(V, w, x) :=  V(x) \mu_0(x) + \frac{1}{\rho} \nabla V(x)^\top B(x) w(x) \\
& \qquad\qquad\qquad\qquad\qquad~~ +  S_M\left(w(x), f(x) -   V(x) + \frac{1}{\rho} \nabla V(x)^\top a(x) \right) \, .
\end{align*}
Let us define\footnote{With a slight abuse of notations, we denote as gradients objects which are actually sub- and supergradients if $w(x)=0$, in which case we use the measurable selection defined in Lemma~\ref{lemma3}.}, for any $V \in \cH$, $w \in \cH^m$ and $x \in \cX$:
\begin{align*}
    \Pi(V, w, x) := \frac{\partial \Psi(V, w, x)}{\partial w} \in \cH^m ~~; ~~\quad \Delta(V, w, x) := - \frac{\partial \Psi(V, w, x)}{\partial V}  \in \cH \, ,
\end{align*}
$\Omega(V, w, x) := [\Pi(V, w, x), \Delta(V, w, x)]^\top$, and the variational inequality functional \begin{align*}
     \Omega(V, w) := \begin{pmatrix}
    \Ee_{x \sim \cU(\cX)}[\Pi(V, w, x)] \\ \Ee_{x \sim \cU(\cX)}[\Delta(V, w, x)]
\end{pmatrix} \in \cH^m \times \cH \, .
\end{align*} Since $\Psi$ is convex in $w$, concave in $V$, $\Omega$ is monotone mapping on $Z := \cH^m \times \cH$, \textit{i.e.},
\begin{align*}
     \forall z, z' \in Z, \quad \langle \Omega(z) - \Omega(z') , z - z' \rangle \geq 0 , 
\end{align*}
where $\langle \cdot, \cdot\rangle$ is the natural inner product on the Cartesian product space $Z$. Solving~($\text{PD}(\cH)$) is equivalent to finding a solution to the variational inequality \begin{align} \label{eqn:VI}
    \text{find } z^* \in Z ~~\text{ such that } ~~ \forall z \in Z, ~ \langle F(z), z^* - z \rangle \leq 0 \, .
\end{align}
To solve~\eqref{eqn:VI}, we have access to stochastic oracles $z \mapsto [\Pi(z, x_n), \Delta(z, x_n)]^\top$, where the $x_n$ are sampled i.i.d., uniformly on $\cX$. Each oracle satisfies the following property.
\begin{proposition}\label{prop:projection} There exists a constant $c_\Omega > 0$ depending only on the control problem and the RKHS, such that for any $V \in \cH$, $w \in \cH^m$, $x \in \cX$ and $r_V, r_w  \geq 0$ such that $\|V\|_\cH \leq r_V$ and $\|w\|_{\cH^m} \leq r_w$, then %
\begin{align*}
    \|\Pi(V, w, x)\|_{\cH^m} \leq c_\Omega(1 + r_V + r_w)~~\text{and}~~~  \|\Delta(V, w, x)\|_{\cH} \leq c_\Omega(1+ r_V + r_w) \, . 
\end{align*} 
\end{proposition}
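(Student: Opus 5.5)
Throughout, $C$ denotes a generic constant depending only on the control problem and on $\cH$. The plan is to compute $\Pi$ and $\Delta$ explicitly by the chain rule, write each as a combination of the feature vectors $\Phi(x)$ and $\partial_i\Phi(x)$, and then bound the scalar coefficients in front of them. Since $K\in\cC^2(\cX\times\cX)$ and $\cX$ is compact, we have
\[
\kappa_0:=\sup_x\|\Phi(x)\|_\cH<\infty, \qquad \kappa_1:=\sup_{x,i}\|\partial_i\Phi(x)\|_\cH<\infty.
\]
By the reproducing property, $|V(x)|\le \kappa_0 r_V$, $\|\nabla V(x)\|\le \sqrt d\,\kappa_1 r_V$ and $\|w(x)\|\le \kappa_0 r_w$. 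With $\beta(x):=f(x)-V(x)+\rho^{-1}\nabla V(x)^\top a(x)$, Assumption \ref{hypo:regularity} then gives $|\beta(x)|\le C(1+r_V)$.

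\textbf{The bound on $\Pi$.} By Lemma \ref{lemma3},
\[
\Pi(V,w,x)=\Big(\rho^{-1}B(x)^\top\nabla V(x)+\nabla_\alpha S_M(w(x),\beta(x))\Big)\otimes\Phi(x).
\]
The first term is bounded by $C r_V$. The second term, $\|\alpha\|^{q-2}\alpha\,\mu^{*1-q}$, is the delicate one, because $\mu^*$ may be small and $1-q<0$. I split into the two cases of Lemma \ref{lemma3}.
\begin{itemize}
\item If $\mu^*=M$, the norm equals $\|\alpha\|^{q-1}M^{1-q}$.
\item If $\mu^*=c\,\beta^{-1/q}\|\alpha\|$ with $\beta>0$, the powers of $\|\alpha\|$ cancel. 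The norm becomes $c^{1-q}\beta^{(q-1)/q}$, which is independent of $\alpha$.
\end{itemize}
In both cases the norm is at most $C(\|\alpha\|^{q-1}+|\beta|^{(q-1)/q})$. Since $q-1\le 1$ and $(q-1)/q\le 1$, we have $t^{s}\le 1+t$ for $s\in(0,1]$, so this is at most $C(1+r_w+r_V)$. Multiplying by $\kappa_0$ gives the claimed bound. The selection $0$ at $\alpha=0$ is trivially bounded.

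\textbf{The bound on $\Delta$.} We differentiate in $V$:
\begin{align*}
-\Delta(V,w,x)&=\mu_0(x)\Phi(x)+\rho^{-1}\sum_i (B(x)w(x))_i\,\partial_i\Phi(x)\\
&\quad+\partial_\beta S_M(w(x),\beta(x))\Big(-\Phi(x)+\rho^{-1}\sum_i a_i(x)\partial_i\Phi(x)\Big).
\end{align*}
Here $\mu_0$ is bounded by assumption, $\|B(x)w(x)\|\le C r_w$, and $\partial_\beta S_M=\mu^*\in[0,M]$ (with supergradient $M$ at $\alpha=0$). This yields $\|\Delta\|_\cH\le C(1+r_w)$. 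Taking $c_\Omega$ as the maximum of all the constants finishes the proof.

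\textbf{Main obstacle.} The only nontrivial step is controlling $\nabla_\alpha S_M$ when $\mu^*$ is small: one must observe the cancellation of $\|\alpha\|$ in the interior regime, and use that the exponents are at most 1 to get at most linear growth.
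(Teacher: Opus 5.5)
Your proposal is correct and follows essentially the same route as the paper. Both arguments write $\Pi$ and $\Delta$ explicitly by the chain rule as combinations of $\Phi(x)$ and $\partial_i\Phi(x)$, and both bound $V(x)$, $\nabla V(x)$ and $w(x)$ via the reproducing property. Both then use $\partial_\beta S_M=\mu^*\in[0,M]$ and $\|\nabla_\alpha S_M\|\le\max\{(q/(q-1))^{(q-1)/q}\beta_+^{(q-1)/q},(\|\alpha\|/M)^{q-1}\}$, and finish with the sublinearity of the exponents $q-1$ and $(q-1)/q$. The cancellation of $\|\alpha\|$ in the interior regime that you make explicit is exactly what justifies the paper's max bound.
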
 

We use the stochastic mirror-prox algorithm\footnote{Although this algorithm has been introduced for finite-dimensional spaces, its convergence analysis being dimension-independent, it directly generalizes to Hilbert spaces.} introduced by~\cite{juditsky2011solving}. Let $r >0$, we define $Z_r := \{ z=(w, V) \in Z, \text{ s.t. } \|w\|_{\cH^m } \leq r \text{ and } \|V\|_\cH \leq r\}$.  To ensure that the gradients remain bounded, we will use a proximal operator that projects the iterates on this set, and then take $r \to +\infty$ to ensure universal approximation of the primal and dual variables~$w^*$ and~$V^*$. Let us define the following proximal operator:
\begin{align*}
    \textnormal{Prox}^{Z_r}_{z_0}(z) = \argmin_{\zeta \in Z_r} \left\{ \frac{1}{2} \| \zeta \|^2_{\cH^m \times \cH} + \langle z - z_0, \zeta \rangle \right\} .
\end{align*}
This is a separable operation in $(w, V)$ that can be computed as follows:
\begin{align}
    \text{Prox}_{(w_0, V_0)}^{Z_r}(w, V) = \begin{pmatrix}
    \text{Proj}_{\cB_r(\cH^m)} (w_0 - w) \\
    \text{Proj}_{\cB_r(\cH)} (V_0 - V)
\end{pmatrix} ,
\end{align}
where $\cB_r(\cH^m)$ and $\cB_r(\cH)$ are respectively the balls of radius $r$ of $\cH^m$ and $\cH$ centered at the origin.

Let $\zeta_0 \in \cH^m \times \cH$, $\gamma >0$. At each time step $n \geq 1$, the stochastic mirror-prox algorithm samples~$x_n$ and $x_n'$  uniformly and independently on~$\cX$ and makes the two updates:
\begin{align} \label{eqn:primaldualiter}
    z_n & = \text{Prox}_{\zeta_{n-1}}^{Z_r}( \gamma \Omega(\zeta_{n-1}, x_n)) \\
    \zeta_n & = \text{Prox}_{\zeta_{n-1}}^{Z_r} (\gamma \Omega(z_n, x_n')) \, . \label{eqn:primaldualiter2} 
\end{align}
Let us finally define \begin{align*}
    \bar \Psi_r (w) &= \max_{ \| V \|_\cH \leq r} \Ee_{x \sim \cU(\cX)} [\Psi(V, w, x)] \\ \ubar{\Psi}_r(V) &= \min_{ \| w \|_{\cH^m} \leq r} \Ee_{x \sim \cU(\cX)} [\Psi(V, w, x)] \, ,
\end{align*}
along with the following error metric on $Z_r \subset \cH^m \times \cH$, for a candidate couple $(\tilde V, \tilde w)$, sometimes called the \textit{duality gap}: \begin{align} \label{eqn:errr}
    \cE_r(\tilde V, \tilde w) = \bar \Psi_r(\tilde w) - \ubar{\Psi}_r(\tilde V) \, .
\end{align}
In the following theorem, we prove that for an appropriate choice of $\gamma$ and $r$, the error from~\eqref{eqn:errr} converges to 0 when $n \to + \infty$.%
\begin{theorem} \label{thm3} Assume that Assumptions \ref{hypo:regularity}-\ref{hypo:alpha}-\ref{hypo:mustarbounded} hold. %
     Let $n \geq 1$, define $\bar w_n = \frac{1}{n} \sum_{k=1}^{n} w_k$ and $\bar V_n = \frac{1}{n} \sum_{k=1}^{n} V_k$, where $w_k$ and $V_k$ are the first and second component of $z_k$, obtained from the iterations~\eqref{eqn:primaldualiter}--\eqref{eqn:primaldualiter2}, with parameters $r \geq 1$, and $\gamma =\frac{r}{14 c_\Omega\sqrt{n}}$. Then we have:
    \begin{align*}
        \Ee[\cE_r(\bar V_n, \bar w_n) ] \leq 110 c_\Omega \frac{r^3}{\sqrt{n}} \, .
    \end{align*}
    In particular, if we set $r_n \propto n^{\alpha}$ for some $\alpha \in (0, 1/6)$, then: %
    \begin{align*}
         \lim_{n \to +\infty} \Ee[\cE_{r_n}(\bar V_n, \bar w_n) ] = 0 \, .
    \end{align*} %
\end{theorem}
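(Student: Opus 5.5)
The plan is to apply the stochastic mirror-prox guarantee of \cite{juditsky2011solving} (Theorem~1 / Corollary~1 there) on the compact convex set $Z_r$, with the Euclidean distance-generating function $\omega(\zeta)=\frac12\|\zeta\|^2_{\cH^m\times\cH}$. The Prox operator defined above is exactly the mirror-prox step for this setup. Since the analysis of \cite{juditsky2011solving} is dimension-free, it transfers verbatim to the Hilbert space $Z=\cH^m\times\cH$.

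\textbf{Step 1: the setup constants.}
\begin{itemize}
\item \emph{Diameter.} The $\omega$-diameter of $Z_r$ satisfies
\[
\Theta^2=\max_{Z_r}\omega-\min_{Z_r}\omega\le r^2 .
\]
\item \emph{Boundedness of the oracle.} By Proposition~\ref{prop:projection}, with $r_V=r_w=r$ and $r\ge1$,
\[
\|\Omega(z,x)\|\le\sqrt2\,c_\Omega(1+2r)\le 3\sqrt2\,c_\Omega r \quad\text{for all } z\in Z_r \text{ and all } x .
\]
This simultaneously gives a uniform bound on $\Omega$ over $Z_r$ (the Lipschitz-type constant $M$) and on the variance $\sigma^2\le 4\sup\|\Omega\|^2$ of the unbiased oracle. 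Unbiasedness holds because $x_n, x_n'$ are i.i.d.\ uniform and $\Omega(z)=\Ee_x\Omega(z,x)$.
\item \emph{Monotonicity.} $\Omega$ is monotone because $\Psi$ is convex in $w$ and concave in $V$. Convexity in $w$ comes from $S_M$ being an infimum over $\mu$ of a jointly convex perspective function, composed with the linear maps $w\mapsto w(x)$ and $V\mapsto(\ldots)$. Concavity in $V$ holds because $S_M$ is concave and nondecreasing in $\beta$ (an infimum of affine functions) while $V\mapsto \beta$ is affine.
\end{itemize}

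\textbf{Step 2: the mirror-prox bound.} For the non-smooth case (treat $L=0$ and use only the bound $M$ on $\Omega$), the guarantee of \cite{juditsky2011solving} reads
\[
\Ee\,\mathrm{Err}_{\rm vi}(\bar z_n)\le C\Big(\frac{\Theta^2}{\gamma n}+\gamma (M^2+\sigma^2)\Big),
\]
where $\bar z_n$ is the uniform average. With $\gamma=\Theta/(\,\cdot\, c_\Omega r\sqrt n)$ this is $O(\Theta\, c_\Omega r/\sqrt n)$.

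This has to be converted into a bound on the saddle-point gap $\cE_r$. The standard step: for a convex-concave $\Psi$, the gap $\bar\Psi_r(\bar w)-\ubar\Psi_r(\bar V)$ is bounded by $\sup_{z\in Z_r}\frac1n\sum_k\langle\Omega(z_k),z_k-z\rangle$. Expectation inside the sup is handled by the usual ghost-iterate trick for the noise term, and \cite{juditsky2011solving} already provide this for saddle-point problems.

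\textbf{Step 3: constant bookkeeping.} Plugging $\Theta\le r$, $M\lesssim c_\Omega r$ and $\gamma=r/(14c_\Omega\sqrt n)$ naively gives $O(c_\Omega r^2/\sqrt n)$. The statement allows $r^3$, so I would bound $(1+2r)^2\le 9r^2$ generously and also absorb a factor $r$ from the diameter of both components. Tracking the explicit numerical constants of \cite{juditsky2011solving} (the $7/\sqrt{\cdots}$ and $14$ factors appear there) should produce $110c_\Omega r^3/\sqrt n$; the $r^3$ leaves slack, so the exact constants should not be delicate.

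\textbf{Step 4: the limit.} With $r_n\propto n^\alpha$ and $\alpha<1/6$, we get $r_n^3/\sqrt n\to0$, which yields the limit claim.

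The main obstacle is Step 2: checking that the mirror-prox theorem's assumptions hold in infinite dimensions. This requires measurability of the oracle, including the measurable sub/supergradient selection at $w(x)=0$ from Lemma~\ref{lemma3}, and that the sup in $\cE_r$ is attained, which follows from weak compactness of the balls together with weak upper/lower semicontinuity of the concave/convex expectations. I would verify these first and then quote the bound.
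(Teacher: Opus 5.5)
Your approach is the paper's proof. You invoke Theorem~1 of \cite{juditsky2011solving} with $L=0$, take $M=2\sqrt2\,c_\Omega(1+2r)$ and $\sigma=M$ from Proposition~\ref{prop:projection}, use the Euclidean prox-setup on $Z_r$ and the prescribed $\gamma$, and then let $r_n\propto n^\alpha$ with $\alpha<1/6$.

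Your Step~3, however, contains a miscalculation. With $\gamma=r/(14c_\Omega\sqrt n)$, the plug-in is not $O(c_\Omega r^2/\sqrt n)$. The prescribed step size is $r$ times larger than the balanced choice you wrote in Step~2. As a result, the noise term $\frac{7\gamma}{2}(M^2+2\sigma^2)\le 756\,c_\Omega^2r^2\gamma=54\,c_\Omega r^3/\sqrt n$ is already of order $r^3$ and dominates. So there is no slack in the power of $r$, and the numerical constant must actually be checked rather than assumed harmless.

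The check does succeed. With squared radius $\max_{Z_r}\|z\|^2=2r^2$, as in your Step~1, the distance term is $2r^2/(n\gamma)=28\,c_\Omega r/\sqrt n$. The total is therefore at most $82\,c_\Omega r^3/\sqrt n$, which is within the claimed $110\,c_\Omega r^3/\sqrt n$. The paper arrives at $110=56+54$ by plugging the value $2r^2$ in for the radius itself rather than its square. Its distance term is thus $4r^4/(n\gamma)=56\,c_\Omega r^3/\sqrt n$, which is looser but still valid since $r\ge1$.
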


\begin{remark}
    If we assume that $V^*$ and $w^*$ belong to the RKHS (hence have finite norm),  the projection radius can be set constant, and the convergence rate is of order $O(n^{-1/2})$. This is similar, in the same situation, to the rates obtained for the dual\footnote{Although Theorem~\ref{thm:dual} provides a slower rate $O(n^{-1/3})$ in the general case $V^* \in L^2(\cX)$, if $V^*$ has bounded and known $\cH$-norm, averaged SGD benefits from the optimal rate $O(n^{-1/2})$.} and primal in the previous sections. Moreover, the constants of all three rates have a polynomial dependence in $1/\rho$, exploding as $\rho \to 0$. This is similar to classical RL algorithms  scaling with the horizon $1/(1-\gamma)$, where $\gamma$ is the discount factor~\cite{Sutton1998}.%
\end{remark}

Consider, for any $n \geq 1$, $(w_n, V_n)^\top:=z_n$. With a simple recursion using the expressions of $\Pi(V, w, x)$ and $\Delta(V, w, x)$ derived in the proof of Proposition~\ref{prop:projection}, we can prove that $w_n$ and $V_n$ can be represented in finite-dimension as follows.
\begin{proposition} \label{prop:implemprimaldual}
 For any $n \geq 1$, there exist $\tilde \alpha^{(n)} \in \Rr^{n \times m}$, $\alpha^{(n)} \in \Rr^n$, $\beta^{(n)} \in \Rr^{n \times d}$ and $\tilde \alpha_0, \alpha_0 \in \Rr$ such that 
\begin{align*}
   w_n &=  \tilde \alpha_0 [\zeta_0]_1 + \sum_{k=1}^{n} (\tilde \alpha^{(n)}_{k, \cdot} \otimes \Phi(x_k) )  \\
   V_n &= \alpha_0 [\zeta_0]_2 + \sum_{k=1}^{n}\alpha^{(n)}_{k} \Phi(x_k) + \sum_{k=1}^{n} \sum_{i=1}^d \beta^{(n)}_{k, i} \frac{\partial \Phi}{\partial x_i}(x_k) \, . %
\end{align*}
\end{proposition}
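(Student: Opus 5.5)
The plan is an induction on $n$, tracking the span of the objects produced by the oracles and by the projections. The first step is to compute the stochastic oracles $\Pi(V,w,x)$ and $\Delta(V,w,x)$ explicitly, using the reproducing properties $V(x)=\langle V,\Phi(x)\rangle_\cH$, $\partial_i V(x)=\langle V,\partial_i\Phi(x)\rangle_\cH$ and $w(x)=(\langle w_j,\Phi(x)\rangle_\cH)_j$. Writing $\beta(x)=f(x)-V(x)+\rho^{-1}\nabla V(x)^\top a(x)$ and applying the chain rule together with Lemma~\ref{lemma3}, we get
\begin{align*}
\Pi(V,w,x) &= \Big(\rho^{-1}B(x)^\top\nabla V(x) + \nabla_\alpha S_M(w(x),\beta(x))\Big)\otimes \Phi(x),\\
\Delta(V,w,x) &= -\mu_0(x)\Phi(x) - \rho^{-1}\sum_{i=1}^d (B(x)w(x))_i\,\partial_i\Phi(x) + \partial_\beta S_M(w(x),\beta(x))\Big(\Phi(x) - \rho^{-1}\sum_{i=1}^d a_i(x)\partial_i\Phi(x)\Big).
\end{align*}
At $w(x)=0$ we use the measurable selection from Lemma~\ref{lemma3}, which only changes the scalar coefficients. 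The key observation is that $\Pi(\cdot,\cdot,x)\in \Rr^m\otimes\Phi(x)$, with no derivative features, while $\Delta(\cdot,\cdot,x)\in{\rm span}\{\Phi(x),\partial_1\Phi(x),\dots,\partial_d\Phi(x)\}$. This explains why $w_n$ carries no $\beta$-type terms.

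The second step is to unfold one iteration. The prox is $\text{Proj}_{\cB_r}(\zeta_{n-1}-\gamma\Omega)$ componentwise (up to the sign convention written in the paper). A projection onto a centered ball is a nonnegative scalar multiple $\min\{1,r/\|\cdot\|\}$ of its argument. Hence every component of $z_n$ and $\zeta_n$ is a scalar multiple of the corresponding component of $\zeta_{n-1}$ plus $\gamma$ times an oracle term, and the scalar multiples preserve the spanning families. The induction hypothesis, applied to both sequences $z_n$ and $\zeta_n$, is then:
\begin{itemize}
\item the $w$-component lies in ${\rm span}\{[\zeta_0]_1\}+\sum_{k\le n}\Rr^m\otimes\Phi(\cdot)$, with sample points drawn from the points used so far;
\item the $V$-component lies in ${\rm span}\{[\zeta_0]_2\}+{\rm span}\{\Phi(\cdot),\partial_i\Phi(\cdot)\}$ over the same points.
\end{itemize}
The base case is $\zeta_0$ itself, with $\tilde\alpha_0=\alpha_0=1$ and all other coefficients zero.

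The main obstacle is bookkeeping. Each step uses two samples $x_n$ and $x_n'$, whereas the statement indexes only $n$ points $x_k$. The update producing $z_n$ involves $\zeta_{n-1}$ and $\Omega(\zeta_{n-1},x_n)$, so it brings in $x_n$ together with the points already present in $\zeta_{n-1}$; those earlier points include the $x_k'$. To obtain the stated form, I would either relabel the sample points (the family $(x_k)$ in the statement being the collection of all points sampled so far, with $2n$ terms absorbed into the notation), or note that the statement's indexing is schematic. I would then make this explicit by writing the coefficients recursively: both coefficient vectors are multiplied by the projection scalar, and the new block is appended with value $-\gamma\times$(the bracketed scalar or vector). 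This also yields an implementable recursion, in which evaluating $w(x)$, $V(x)$ and $\nabla V(x)$ requires only kernel evaluations and first and second kernel derivatives, as in Proposition~\ref{prop:implem}.
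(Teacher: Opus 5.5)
This is essentially the paper's argument: a recursion using the explicit forms of $\Pi$ and $\Delta$ from the proof of Proposition~\ref{prop:projection}, where $\Pi(\cdot,\cdot,x)\in\Rr^m\otimes\Phi(x)$ explains why $w_n$ has no derivative features, together with the fact that projecting onto a centered ball is a nonnegative rescaling. You can sharpen the bookkeeping instead of absorbing $2n$ points: $\zeta_k$ only receives oracle terms at $x'_k$ (the iterate $z_k$ enters only through scalar coefficients), so $z_n$ involves exactly the $n$ points $x'_1,\dots,x'_{n-1},x_n$, which gives the stated count once $x_k$ is read as $x'_k$ for $k<n$.
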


\subsection{Numerical example} As before, we use the same one-dimensional example with dynamics $\dot x = u$. We implement the iterations~\eqref{eqn:primaldualiter}--\eqref{eqn:primaldualiter2}, with step size~$\gamma$ initialized to $10^{-3}$ and decreased by $25 \%$ every $20,000$ steps. In this experiment, projecting the iterates was not necessary. For each $x\in \cX$ and at step $n$, $\mu_n(x)$ is obtained from $\bar V_n$ and $\bar w_n$ by \begin{align} \label{eqn:mu}
     \mu_n(x) := \argmin_{0\leq \mu \leq M} ~\left\{  \mu \left( f(x) -   \bar V_n(x) + \frac{1}{\rho} \nabla \bar V_n(x)^\top a(x) \right) + \frac{\|\bar w_n(x)\|_2^2}{2\mu} \right \} ,
\end{align}
with the closed-form expression derived in Lemma~\ref{lemma3}. Figure~\ref{fig:PD} shows the primal and dual iterates, along with the obtained  $\mu_n$. It approximates $\mu^*$ correctly, except around 0, where the value of $\mu_n$ has little influence on the objective as soon as $\bar V_n$ is near optimal, since $f- V^* \simeq 0$ in this area. From $\mu_n$, a natural candidate controller is $ \bar w_n/\mu_n$, displayed in Figure~\ref{fig:PD}, which closely approximates $u^*$. 

In Figure~\ref{fig:PDperf}, we show the evolution of the functions $J$ and $F_M$, respectively defined in Equation~\eqref{eqn:J} and Section~\ref{sec:sgddual}, across the iterations.  Because $a \equiv 0$ in this example, we can evaluate the primal objective~$G$ defined in Section~\ref{sec:sgdprimal} as well. Finally, we plot the performance of the controller $ \bar w_n/\mu_n$, which quickly becomes optimal, and remains upper-bounded by $J(\bar w_n, \mu_n)$. Note that in this example, the performance of the candidate controller $- \bar V'_n / \rho$ solely obtained from the dual variable $\bar V_n$ also obtains a near optimal performance, although with a slightly higher cost. However, no explicitly computable functional plays the role of $J$ to control the performance of this second controller.

\begin{figure}%
 \hspace{2.6cm}   \includegraphics[width=0.6\linewidth]{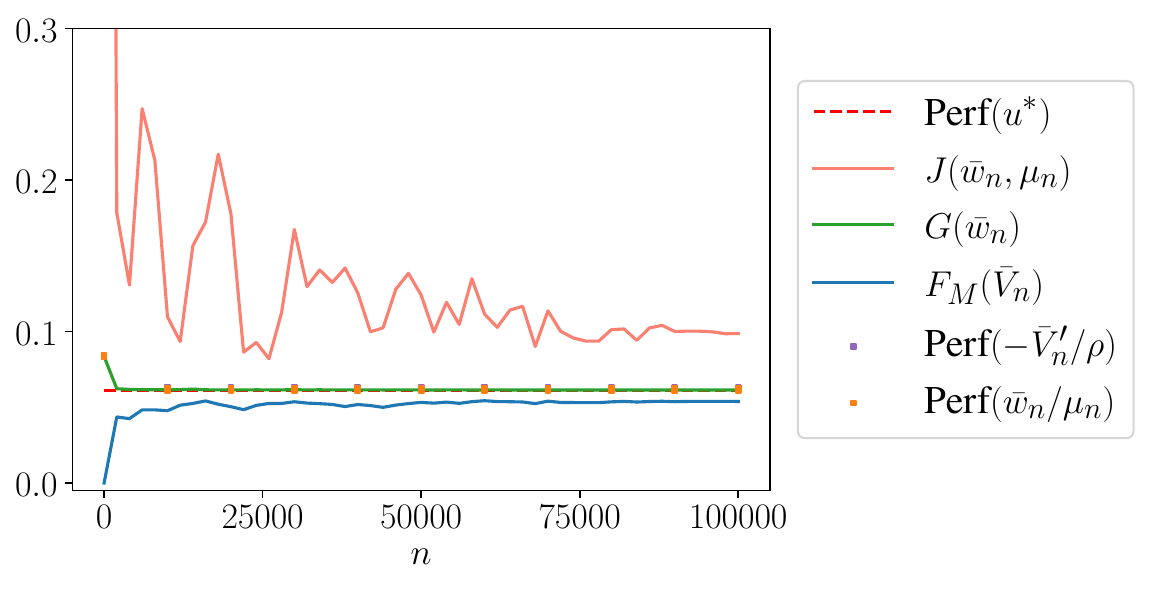} \vspace{-0.35cm}
    \caption{Evolution of functions $J$, $F_M$ and $G$, along with the performance of the controller computed by $\bar w_n/\mu_n$, over the iterations. Note that the performance is evaluated up to some finite precision, due to discrete-time approximations in the simulated trajectories.}
    \label{fig:PDperf}
\end{figure}

\begin{figure}%
   \hspace{1.6cm} 
\includegraphics[width=0.75\linewidth]{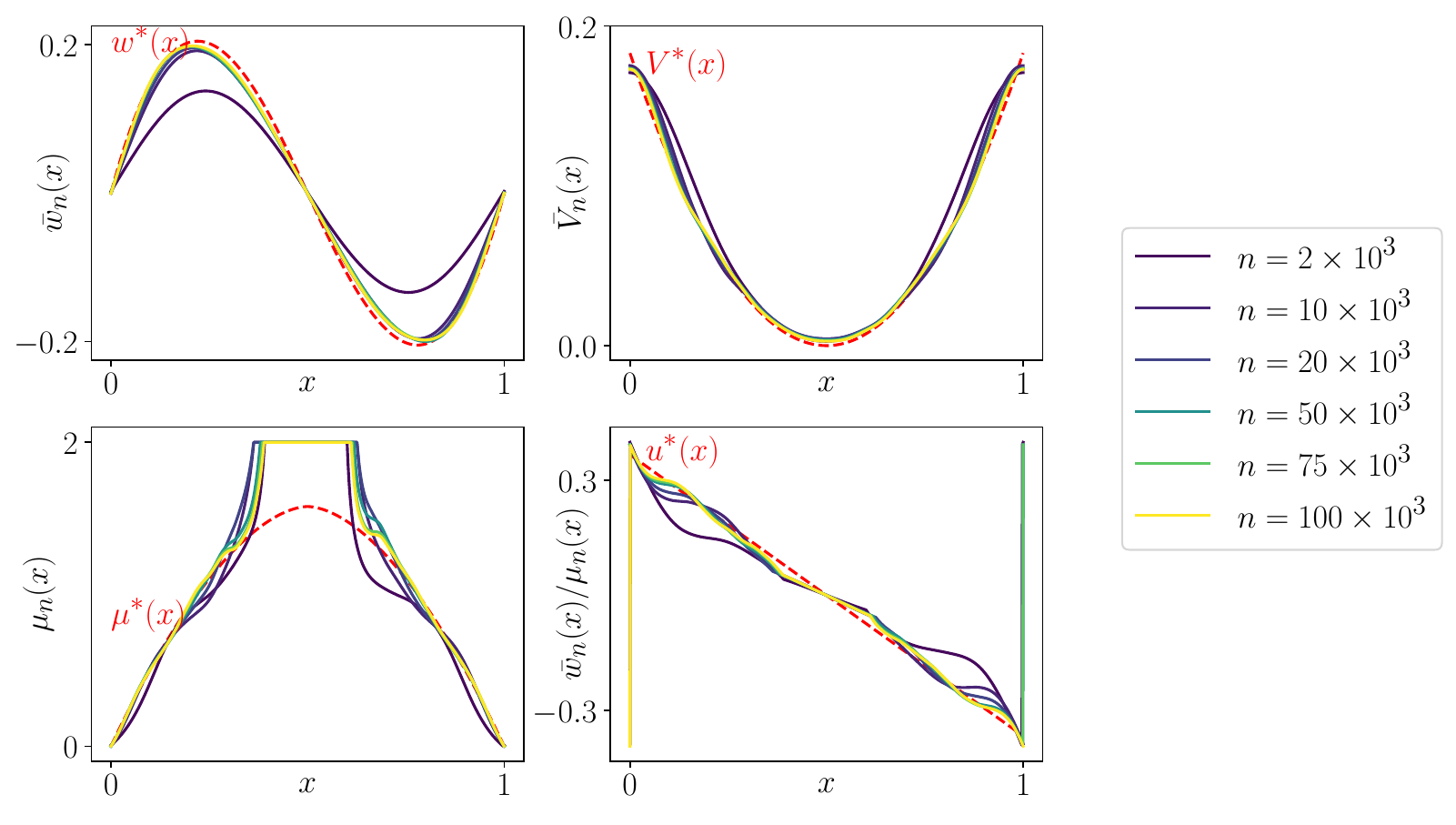} \vspace{-0.3cm}
    \caption{$\bar w_n$ and $\bar V_n$ obtained from iterations~\eqref{eqn:primaldualiter}--\eqref{eqn:primaldualiter2}, $\mu_n$ obtained by equation~\eqref{eqn:mu} and the corresponding candidate controller obtained from $\bar w_n/\mu_n$, where $\mu_n$ is computed using Lemma~\ref{lemma2}, are plotted for growing values of~$n$, along with $w^*$ and $u^*$ in red dotted lines.}
    \label{fig:PD}
\end{figure}

\subsection{Relation with the primal and dual approaches} \label{subsec:PDcomp} In such primal-dual optimization problems, the duality gap, as defined in Equation~\eqref{eqn:errr}, usually serves as a stopping criterion. However, this quantity cannot always be explicitly computed, but can be related to functions which we can be easily evaluated (namely $F_M$ and $J$, as defined in Sections~\ref{sec:sgddual} and~\ref{sec:poleval}), as follows.
\begin{proposition} \label{prop:gap}
   Assume that $q=2$. Then for any given candidate $\tilde V \in \cH$: \begin{align*}
       \inf_{w : \cX \to \Rr^m} \Ee_{x \sim \cU(\cX)} \Psi(\tilde V, w, x)  = F_M ( \tilde V) \, .
   \end{align*}
    For any $q \in (1, 2]$, and any given candidate $\tilde w \in \cH^m$,
   \begin{align*}
       \sup_{V:\cX \to [0, C_V]} \Ee_{x \sim \cU(\cX)} \Psi(V, \tilde  w, x) = \inf_{\mu : \cX \to [0, M]} J(\tilde w, \mu) \, .
   \end{align*}
    Moreover, if $a \equiv 0$:
  \begin{align*}
       \sup_{V:\cX \to \Rr} \Ee_{x \sim \cU(\cX)} \Psi(V, \tilde  w, x)  \geq G (\tilde w) \, .
  \end{align*}
\end{proposition}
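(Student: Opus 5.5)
The proof rests on one observation: $\Psi$ already comes from minimizing over $\mu\in[0,M]$ pointwise, so every quantity in the statement is a pointwise min/max exchange at each $x$. I will treat the three claims separately.

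\textbf{First claim} ($q=2$, fixed $\tilde V$). Write $\beta(x)=f(x)-\tilde V(x)+\rho^{-1}\nabla\tilde V(x)^\top a(x)$ and $p(x)=\rho^{-1}B(x)^\top\nabla\tilde V(x)$. Then
\[
\Psi(\tilde V,w,x)=\tilde V(x)\mu_0(x)+\inf_{\mu\in[0,M]}\Bigl\{\beta\mu+p^\top w(x)+\tfrac{\|w(x)\|^2}{2\mu}\Bigr\}.
\]
The infimum over all functions $w$ is pointwise, because the objective depends only on $w(x)$ and measurable selection is trivial here. So I exchange $\inf_w$ and $\inf_\mu$. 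For fixed $\mu>0$, the infimum over $w$ of $p^\top w+\|w\|^2/(2\mu)$ equals $-\mu\|p\|^2/2=-\mu R^*(-p)$, attained at $w=-\mu p$. For $\mu=0$ the perspective convention gives $w=0$ and value $0$, which is consistent.

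The remaining problem is $\inf_{\mu\in[0,M]}\mu\bigl(\beta-R^*(-p)\bigr)$. It equals $-M[R^*(-p)-\beta]_+$. Since $R^*(-p)-\beta$ is exactly the bracket $Q(\tilde V,x)$ appearing in (D($M$)), integrating over $x\sim\cU(\cX)$ gives $F_M(\tilde V)$. (The unit Lebesgue measure of $\cX$ identifies $\int \tilde V\mu_0$ with $\Ee_x\Lambda$.)

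\textbf{Second claim} (general $q$, fixed $\tilde w$). I return to the form of $\Psi$ before $\mu$ was minimized out, i.e. to \eqref{eqn:PD} with $\mu$ restricted to $[0,M]$. I undo the integration by parts using Green's identity \eqref{eqn:green}, which applies since $V,\tilde w\in\cH$ are periodic and $\cC^1$. This gives $\Ee_x\Psi(V,\tilde w,x)=\inf_{\mu\in[0,M]}\Phi(V,\mu)$, where
\[
\Phi(V,\mu)=\int\Bigl(f+R(\tilde w/\mu)\Bigr)\mu+\int V\Bigl(\mu_0-\mu-\rho^{-1}\mathrm{div}(a\mu+B\tilde w)\Bigr).
\]
This step needs care, because the divergence of $a\mu$ is only distributional and $\mu$ is merely measurable. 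The cleanest route is to keep the term as $\rho^{-1}\nabla V^\top(a\mu+B\tilde w)$ throughout and apply Green's identity only after taking the supremum.

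Next I exchange $\sup_{V:\cX\to[0,C_V]}$ and $\inf_{\mu\in[0,M]}$. The function $\Phi$ is linear in $V$ and convex in $\mu$, and the pointwise box $[0,M]$ is weak-$*$ compact in $L^\infty$, so a Sion-type minimax theorem applies. I expect this step to be \textbf{the main obstacle}. The $V$-side of the exchange involves $\nabla V$, so $V$ must be restricted to Lipschitz functions bounded in $[0,C_V]$. Then $\sup_V$ of the linear functional $\int V\,\eta$, with $\eta=\mu_0-\mu-\rho^{-1}\mathrm{div}(\cdots)$, equals $C_V\int[\eta]_+$, by a density argument that approximates $C_V\indic_{\eta>0}$ by smooth functions. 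This yields $\inf_{\mu\in[0,M]}J(\tilde w,\mu)$. If the minimax exchange fails in this generality, the fallback is to prove the two inequalities separately: weak duality for $\le$, and for $\ge$ take the pointwise minimizer $\mu$ from Lemma~\ref{lemma3} together with the approximating $V$.

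\textbf{Third claim} ($a\equiv0$). The supremum is now over all real $V$, which only enlarges the set, so
\[
\sup_{V:\cX\to\Rr}\Ee\Psi\ \ge\ \sup_{V\in[0,C_V]}\Ee\Psi=\inf_{\mu\in[0,M]}J(\tilde w,\mu).
\]
With $a=0$ the infimum of $J$ over $\mu$ is separable in $x$. Over all $\mu\ge0$ its pointwise value is $\Gamma(\tilde w,x)$, by Lemma~\ref{lemma2} with $c=f(x)$, $C=C_V\ge f(x)$, $\alpha=\|\tilde w(x)\|$, $\beta=\mu_0(x)-\rho^{-1}\mathrm{div}(B\tilde w)(x)$. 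Restricting $\mu$ to $[0,M]$ can only increase the infimum, so $\inf_{\mu\le M}J\ge G(\tilde w)$, and the inequality follows.
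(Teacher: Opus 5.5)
Your proof is correct and follows the paper's route. You eliminate $w$ and then $\mu$ pointwise for the first identity. For the second, you exchange $\sup_V$ and $\inf_\mu$ and then take the pointwise supremum $\sup_{V\in[0,C_V]} V\eta = C_V[\eta]_+$. The third follows from enlarging or shrinking the constraint sets.

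Your Sion-type argument (weak-$*$ compactness of the $[0,M]$ box, convexity and lower semicontinuity in $\mu$) makes explicit what the paper only invokes as ``strong duality''. Your inequality $\inf_{\mu\in[0,M]}J(\tilde w,\mu)\ge G(\tilde w)$ is also the accurate form of the paper's claimed equality, since $G$ minimizes over all $\mu\ge 0$ rather than over $[0,M]$.
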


Broadly speaking, this primal-dual approach can be seen as an extension of the primal one, where the necessity of $a \equiv 0$ in Problem~\eqref{eqn:Ppen} for variable separation is bypassed by the introduction of the dual variable $V$ and an adequate integration by parts. As such, no advantage over the primal is foreseen in the $a \equiv 0$ case, as confirmed in our numerical example by the much larger number of samples required for convergence in Figure~\ref{fig:PDperf} compared to Figure~\ref{fig:primalperf}. Still, this approach retains the advantage of the primal over the dual, of offering some control over the performance of the candidate suboptimal controllers.

The primal-dual approach can be compared to classical actor-critic algorithms. While the latter are two time-scale algorithms~\cite{konda2003onactor}, ours relies on a single time-scale mirror-prox scheme.  Specifically, actor-critic algorithms optimize both the value function~$V$ and the policy (or controller)~$u$, but~$u$ is updated at a much slower rate than~$V$, so that one can usually assume that the value function has converged when the policy is updated. Our method uses slightly different variables: the same value function $V$ as dual variable, but the couple~$(w, \mu)$ instead of the policy~$u$ as primal variable, which benefit from a convex-concave structure and are updated at the same rate.

\section{Behavior of the optimal occupation measure} \label{sec:mustar} We now briefly discuss under which conditions Assumption~\ref{hypo:mustarbounded} may or may not hold. In a nutshell, we show that, if $V^*$ is sufficiently smooth and $\rho$ is large enough, then $\mu^*$ cannot explode. Additionally, in the particular case $q<2$ and $a \equiv 0$, $\mu^*$ remains bounded for any $\rho$. Note that this analysis is specific to our choice of discounted setting, and divergences do not typically happen with finite-horizon control problems. In this Section, to first obtain regularity conditions on $V^*$, we now make the following assumption, strengthening Assumptions~\ref{hypo:regularity}-\ref{hypo:alpha}.

\begin{enumerate}[label={\bf A4}]
    \item
    \label{hypo:C2}
The functions $f,B,\alpha$ are $\cC^2$ and 
$\alpha$ is such that $a(x)=B(x)\alpha(x)$ for all $x\in\cX$.
Moreover, there exists $C_{\partial B} \geq 0$ such that 
$\partial_{\xi}B(x)\partial_{\xi}B(x)^{\top}\leq C_{\partial B}^2 B(x)B(x)^{\top}$
for all $x\in\cX$ and $\xi\in\Rr^d$ with $\|\xi\|=1$. %
\end{enumerate}

\begin{lemma}
    \label{lem:bound_gradient}
    Under Assumption~\ref{hypo:C2},
    $V^*$ is uniformly Lipschitz continuous
    and satisfies, for a.e.~$x\in\cX$: \begin{align*}
       \|\nabla V^*(x)\|\leq C_{\nabla V^*} :=
    \sqrt{d} \left(C_u^{q-1}C_{\partial B}\|\alpha\|_{\infty}
    +C_u^{q-1}\|\partial_{\xi}\alpha\|_{\infty}
    +C_u^{q} C_{\partial B}
    +\|\partial_{\xi}f\|_{\infty} \right) \, .
    \end{align*} 
    \end{lemma}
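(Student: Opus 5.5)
We propose to bound the Lipschitz constant of $V^*$ directly from the control problem, by comparing trajectories from two nearby initial points. Fix $x,y\in\cX$ and $\varepsilon>0$. Using Corollary~\ref{cor_constr}, take an $\varepsilon$-optimal open-loop control $u$ from $x$ with values in $\cU$, so $\|u_t\|\le C_u$. We cannot simply reuse $u$ from $y$: trajectories separate exponentially, at rate about $\mathrm{Lip}(b)$, which would require $\rho$ large. Instead we exploit the structure $a=B\alpha$ and the bound $\partial_\xi B\,\partial_\xi B^\top\le C_{\partial B}^2 BB^\top$. We build from $y$ a modified control $\tilde u$ whose trajectory $\tilde y$ satisfies $\tilde y_t - x_t = y-x$ for all $t$, i.e.\ a translated copy. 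That requires
\[
B(\tilde y_t)(\alpha(\tilde y_t)+\tilde u_t)=B(x_t)(\alpha(x_t)+u_t).
\]
Set $h=y-x$ and $\tilde y_t=x_t+h$. Rather than solving this exactly for finite $h$, we work infinitesimally: we differentiate $V^*$ along a direction $\xi$ and aim for a one-sided estimate
\[
V^*(x+h)-V^*(x)\le \|h\|\,C+o(\|h\|),
\]
which gives the a.e.\ gradient bound. It suffices to do this for $h=s e_i$ along coordinate axes; the factor $\sqrt d$ then comes from combining the $d$ partial derivatives.

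The key step is the construction of $\tilde u$. Write $B(x+h)=B(x)+\partial_h B(x)+O(\|h\|^2)$, where the remainder is controlled because $B\in\cC^2$. By the assumption on $\partial_\xi B$, $\mathrm{Im}\,\partial_\xi B(x)\subset \mathrm{Im}\,B(x)$ with a quantitative bound. So there is a matrix $E(x)$, for instance $E=B^\dagger \partial_\xi B$, with $\partial_\xi B(x)=B(x)E(x)$ and operator norm $\|E\|\le C_{\partial B}$. Then choose
\[
\tilde u_t = u_t - s\,\big(E(x_t)(\alpha(x_t)+u_t)+\partial_\xi\alpha(x_t)\big)+O(s^2),
\]
so that $\tilde y_t=x_t+s\xi$ solves the dynamics to first order. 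To make this exact, I would instead define $\tilde u$ by a pseudo-inverse formula and let a Gronwall argument absorb $O(s^2)$ errors; doing this uniformly in time on an infinite horizon is the delicate part. The point is that the discount factor makes the $O(s^2)$ drift contribute only $O(s^2)$ cost, not an exponentially growing one.

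Next we compare costs. The state cost changes by at most $s\|\partial_\xi f\|_\infty$. The control cost changes by at most
\[
\|\nabla R(u_t)\|\,\|\tilde u_t-u_t\|\le C_u^{q-1}s\big(C_{\partial B}(\|\alpha\|_\infty+C_u)+\|\partial_\xi\alpha\|_\infty\big)
\]
plus higher-order terms. Since $C_u^{q-1}C_u=C_u^q$, this produces exactly the constant displayed in the statement. Integrating against $\rho e^{-\rho t}\,\mathrm dt$, which has mass one, gives
\[
V^*(x+s\xi)\le V^*(x)+s(\cdots)+\varepsilon+o(s).
\]
Letting $\varepsilon\to0$ and swapping the roles of $x$ and $y$ yields the Lipschitz bound in each coordinate direction. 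Rademacher's theorem then gives the a.e.\ gradient bound with the factor $\sqrt d$.

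The main obstacle is making the translated-trajectory construction rigorous. This means avoiding an exponential-in-time error accumulation and handling points where $B(x)$ is rank-deficient, where $E$ must be defined measurably with $\|E\|\le C_{\partial B}$. I would first try to avoid exact tracking altogether: discretize $[0,s]$ into small steps and chain the infinitesimal estimate, using only the local first-order expansion at each step. The error is then $O(s^2/N)\cdot N\to$ small as $N\to\infty$, which turns the local derivative bound into a global Lipschitz bound.
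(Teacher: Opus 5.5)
Your cost bookkeeping is sound. Since nothing in it is specific to coordinate directions, it would even give $\|\nabla V^*\|\le C_{\nabla V^*}/\sqrt d$. The gap is the step it rests on: the uniform one-step estimate $V^*(x+s\xi)\le V^*(x)+s\,C_{\nabla V^*}/\sqrt d+\varepsilon+o(s)$ is never established. Your $\tilde u$ matches the velocity of the planned path $x_t+s\xi$ only up to $O(s^2)$. Used open-loop, the actual trajectory $\tilde y_t$ leaves that path. The deviation $\delta_t=\tilde y_t-x_t-s\xi$ then only satisfies $\|\dot\delta_t\|\le L\|\delta_t\|+Ks^2$, with $L$ a Lipschitz constant of the dynamics in $x$. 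Gronwall gives $\|\delta_t\|\lesssim s^2e^{Lt}$, whose discounted integral is finite only when $\rho>L$. That is exactly the obstruction you flagged at the outset, so the claim that discounting turns the $O(s^2)$ drift into an $O(s^2)$ cost is unjustified as stated. Discretizing $[0,s]$ does not help: each sub-step of length $h$ is again an infinite-horizon comparison with the same $h^2e^{Lt}$ amplification. Chaining can upgrade a uniform bound $V^*(z+h\xi)-V^*(z)\le Ch+Kh^2$ to a global Lipschitz bound, but it cannot produce that bound.

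The missing idea is that Assumption~\ref{hypo:C2} allows exact tracking. Integrate your factorization $\partial_\xi B=BE$ along the segment. Set $E(\tau)=B^\dagger\partial_\xi B(x+\tau\xi)$, so $\|E(\tau)\|\le C_{\partial B}$, and let $\Psi$ solve $\Psi'=-E\Psi$ with $\Psi(0)=I_m$. Then $B(x+\tau\xi)\Psi(\tau)$ has derivative $BE\Psi-BE\Psi=0$, hence
\[
B(x)=B(x+s\xi)\,\Psi_s(x),\qquad \|\Psi_s(x)-I_m\|\le e^{C_{\partial B}s}-1 .
\]
So $\mathrm{Im}\,B(x)\subset\mathrm{Im}\,B(x+s\xi)$, and, using $a=B\alpha$, the control
\[
\tilde u_t:=\Psi_s(x_t)\big(\alpha(x_t)+u_t\big)-\alpha(x_t+s\xi)
\]
makes $x_t+s\xi$ the exact trajectory from $x+s\xi$. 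Equivalently, with $y_t=x_t+s\xi$, the pseudo-inverse choice $\tilde u_t=u_t+\alpha(x_t)-\alpha(y_t)+B(y_t)^\dagger\big(B(x_t)-B(y_t)\big)\big(\alpha(x_t)+u_t\big)$ has zero residual. This also disposes of measurability and rank deficiency. Then $\|\tilde u_t-u_t\|\le(e^{C_{\partial B}s}-1)(\|\alpha\|_\infty+C_u)+s\|\partial_\xi\alpha\|_\infty$ for all $t$, and no Gronwall is needed. Your estimates on $f$ and $R$ then give the one-step bound uniformly in $x$ and $\xi$, and chaining or Rademacher finishes.

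Alternatively, recentre your first-order correction at the current displacement $h_t=\tilde y_t-x_t$. This gives $\dot h_t=O(\|h_t\|^2)$, so $h_t$ stays within $O(s^2t)$ of $s\xi$ up to times of order $1/s$, and the discounted tail beyond that is $o(s)$.

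Once repaired, your argument is a genuinely different proof from the paper's. The paper differentiates a viscous, smoothed HJB equation in the direction $\xi$, applies the maximum principle at a maximizer of $\partial_\xi V$, and closes with $\|\rho^{-1}B^\top\nabla V^*\|_\infty\le C_u^{q-1}$ from Proposition~\ref{prop:bound}. Your route uses only the $\cU$-valued near-optimal controls of Corollary~\ref{cor_constr}, and it avoids the PDE regularization and its limit passage.
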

    \begin{lemma}
    \label{lem:SC}
    Under Assumption~\ref{hypo:C2}, either if $q=2$, or if $q<2$ and 
    \begin{align*}
        \rho>\rho_1:=4\sup\{-\xi^{\top}D_xa(x)\xi,x\in\cX,\|\xi\|=1\} \, ,
    \end{align*}
    then $V^*$ is semi-concave~\cite{cannarsa2004semiconcave},  \textit{i.e.}, there exists $C_{\textnormal{SC}} \geq 0$ such that $D^2_{x} V^*(x) \preceq C_{\textnormal{SC}} I_d$, 
where $D^2_x V^*$ denotes the Hessian matrix of $V^*$. 
\end{lemma}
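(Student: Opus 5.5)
We prove semi-concavity of $V^*$ through the classical perturbation argument of \cite{cannarsa2004semiconcave}, applied to the control-constrained problem of Corollary~\ref{cor_constr}. Fix $x\in\cX$ and a unit direction $h$, and let $(x_t,u_t)$ be an optimal pair started at $x$; by Corollary~\ref{cor_constr}, $\|u_t\|\le C_u$. The goal is the second-difference bound
\begin{equation*}
V^*(x+h)+V^*(x-h)-2V^*(x)\le C_{\textnormal{SC}}\|h\|^2 ,
\end{equation*}
which is equivalent to $D_x^2V^*\preceq C_{\textnormal{SC}}I_d$ in the distributional sense. To obtain it, we construct admissible controls $u^\pm_t$ from $x\pm h$ with trajectories $x^\pm_t$, and bound
\begin{equation*}
\rho\int_0^\infty e^{-\rho t}\Big[f(x^+_t)+f(x^-_t)-2f(x_t)+R(u^+_t)+R(u^-_t)-2R(u_t)\Big]\,\mathrm dt .
\end{equation*}

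The controls are chosen to exploit the structure $a=B\alpha$ of \ref{hypo:C2}. Write $\dot x=B(x)(\alpha(x)+u)=B(x)v$ with $v=\alpha+u$. We take $v^\pm_t=v_t$, i.e.\ $u^\pm_t=u_t+\alpha(x_t)-\alpha(x^\pm_t)$, so that the perturbed systems are driven by the same input $v_t$ through the $\cC^2$ vector fields $B$. Then $\delta^\pm_t=x^\pm_t-x_t$ satisfies a linear-in-$\delta$ equation with coefficient $\partial B\,v_t$. The condition $\partial_\xi B\,\partial_\xi B^\top\le C_{\partial B}^2BB^\top$ (the same one used in Lemma~\ref{lem:bound_gradient}) yields
\begin{equation*}
\|\delta_t\|\le e^{Kt}\|h\| \quad\text{and}\quad \|x^+_t+x^-_t-2x_t\|\le C\,e^{2Kt}\|h\|^2 ,
\end{equation*}
with $K$ depending on $C_u$, $\|\alpha\|_\infty$ and $C_{\partial B}$. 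By Taylor expansion with $f,\alpha\in\cC^2$, the second difference of $f$ is $O(e^{2Kt}\|h\|^2)$. For the control cost, the second difference of $u^\pm$ is controlled by $D^2\alpha$ and the first differences are $O(e^{Kt}\|h\|)$. Using that $R$ is $\cC^2$ away from the origin (and globally so when $q=2$), the term $R(u^+)+R(u^-)-2R(u)$ is bounded by $\nabla R(u)\cdot(\text{second difference})+\tfrac12\sup\|D^2R\|\cdot|\text{first difference}|^2$.

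The main obstacle is twofold: the growth factor $e^{2Kt}$ must be integrable against $e^{-\rho t}$, and for $q<2$ the Hessian $D^2R(u)\sim\|u\|^{q-2}$ blows up near $u=0$.
\begin{itemize}
\item For $q=2$, $D^2R=I$ is bounded. Moreover, the $L^\infty$ bounds of Proposition~\ref{prop:bound} together with the Lipschitz estimate of Lemma~\ref{lem:bound_gradient} should allow a localized argument on time windows, via dynamic programming and restarting, giving semi-concavity for all $\rho$.
\item For $q<2$, we would instead keep $u^\pm=u$ (perturbing only the state) and linearize with $D_xa$. Since the linearized flow then grows at most like $e^{\lambda t}$ with $\lambda\le\sup\{-\xi^\top D_xa\,\xi\}$, and the second-order term like $e^{2\lambda t}$ up to constants, integrability against $e^{-\rho t}$ requires $\rho$ to exceed a multiple of $\lambda$. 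The threshold $\rho_1=4\sup\{-\xi^\top D_xa\,\xi\}$ gives this with room to absorb the constants.
\end{itemize}
I expect the delicate step to be the $q=2$ case without a condition on $\rho$. If the restart argument fails, the fallback is to prove the bound on $[0,T]$ for a short time $T$ and then use the dynamic programming principle together with the Lipschitz bound on $V^*$ to propagate it.
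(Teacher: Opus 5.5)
There is a genuine gap. Neither case reaches the statement as posed, and the fallbacks you propose do not close it.

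\textbf{The case $q=2$.} Your choice $v^\pm=v$ lets the perturbed trajectories separate at a rate $K$ of order $\|D_xB\|_\infty(\|\alpha\|_\infty+C_u)$. Your integrand is then $O(e^{2Kt}\|h\|^2)$, so you only obtain semi-concavity for $\rho>2K$.

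Restarting does not remove this. Combine the dynamic programming principle on $[0,T]$ with the Lipschitz bound of Lemma~\ref{lem:bound_gradient}, which only absorbs the $O(\|h\|^2)$ mismatch $x^+_T+x^-_T-2x_T$. This gives $C_{\textnormal{SC}}\le C_T+e^{(2K-\rho)T}C_{\textnormal{SC}}$, where $C_T$ collects the contribution of $[0,T]$. That inequality closes only if $\rho>2K$, and it presupposes $C_{\textnormal{SC}}<\infty$.

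The missing idea is that the control must be used to \emph{cancel} the separation produced by $B$, not merely tolerate it. The paper does this at the PDE level:
\begin{itemize}
\item It works with the viscous, smoothed equation \eqref{eq:HJB_reg}, differentiates twice in $\xi$, and evaluates at a maximum of $v(x,\xi)=\partial^2_{\xi\xi}V(x)$, where $\nabla_x\partial_\xi V=v\xi$.
\item Convexity of the Hamiltonian yields the good term $-\rho^{-2}s^{q'-2}\|B^\top\xi\|^2v^2$.
\item Assumption~\ref{hypo:C2} ($a=B\alpha$ and $\|\partial_\xi B^\top p\|\le C_{\partial B}\|B^\top p\|$) forces every term linear in $v$ to carry a factor $\|B^\top\xi\|$.
\item For $q=2$ one has $s^{q'-2}=1$, so Young's inequality absorbs these terms with no condition on $\rho$.
\end{itemize}

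Your trajectory route can be repaired in the same spirit.
\begin{itemize}
\item Apply Gronwall to $s\mapsto\|B(x_0+s\xi)^\top p\|$. This shows that \ref{hypo:C2} forces $\ker B(x)^\top$, hence ${\rm Im}\,B(x)$, to be independent of $x$. So $B$ has constant rank and a bounded $\cC^2$ pseudo-inverse $B^+$.
\item Then $x^\pm_t:=x_t\pm h$ is exactly realized by $u^\pm_t=u_t+B(x^\pm_t)^+\big[B(x_t)(\alpha(x_t)+u_t)-B(x^\pm_t)(\alpha(x^\pm_t)+u_t)\big]$. This works because the bracket lies in the fixed range ${\rm Im}\,B$.
\item The first and second differences of $u^\pm_t$ in $h$ are $O(\|h\|)$ and $O(\|h\|^2)$, uniformly in $t$.
\item For $q=2$ this gives the second-difference bound for every $\rho>0$.
\end{itemize}

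\textbf{The case $q<2$.} Keeping $u^\pm=u$ gives $\dot\delta=(D_xa+D_xB\,u_t)\delta+O(\|\delta\|^2)$. Two things go wrong.
\begin{itemize}
\item The linearized flow $\dot\delta=D_xa\,\delta$ expands at rate $\sup_{\|\xi\|=1}\xi^\top D_xa\,\xi$. The quantity $\sup(-\xi^\top D_xa\,\xi)$ that you invoke bounds contraction, not expansion.
\item The term $D_xB\,u_t$ adds a rate of order $C_u\|D_xB\|_\infty$, which $\rho_1$ does not control.
\end{itemize}
So your sketch yields at best a threshold like $\rho>2\big(\sup\xi^\top D_xa\,\xi+C_u\|D_xB\|_\infty\big)$, not $\rho>\rho_1$.

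In the paper, the $B$-terms never enter the threshold, because they carry the same factor $s^{q'-2}\|B^\top\xi\|$ as the good quadratic term. Only the pure drift term $\pm\frac{2v}{\rho}\,\xi^\top D_xa\,\xi$ must be absorbed into $v$, and $\rho>\rho_1$ is exactly what makes it at most $v/2$.

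Be aware that the sign of this drift term is inherited from \eqref{eq:HJB_reg}, which writes the drift as $+\rho^{-1}a^\top\nabla_xV$. The HJB equation of Proposition~\ref{prop:optimality} instead rearranges to $V-\rho^{-1}a^\top\nabla V+R^*(-\rho^{-1}B^\top\nabla V)=f$. With that sign, the same maximum-principle computation involves $\sup\xi^\top D_xa\,\xi$. This agrees with your trajectory intuition once its sign is corrected.
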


We can now study the behavior of the optimal (discounted) occupation measure~$\mu^*$. Note that $\mu^*$ can only grow unbounded on zero measure sets, otherwise it would not be a probability measure. Divergence remains possible  at accumulation points of the optimal trajectories, typically at the minima of $V^*$ if $a \equiv 0$. We prove below that such accumulation does not happen in two cases:  if the time horizon is short enough (\textit{i.e.}, $\rho$ large enough), or if controllers that keep the state stationary are penalized (\textit{i.e.}, $a \equiv 0$ and $q<2$).  %

\begin{proposition}
    \label{prop:bound_mu*}
    Under Assumption~\ref{hypo:C2}, 
    $\mu^*$ is uniformly bounded in any of the following two cases:
    \emph{(i)} $a\equiv0$ and $q<2$;
    or \emph{(ii)} $\rho>\rho_0$ for some $\rho_0$ depending on the
    $L^{\infty}$-norms of $B,a$ and $\alpha$ and their derivatives.
\end{proposition}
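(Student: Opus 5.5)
The plan is to bound $\mu^*$ through the Liouville equation of Proposition~\ref{prop:optimality}, $\mu^* + \rho^{-1}\mathrm{div}(b^*\mu^*) = \mu_0$ with $b^*(x) = a(x)+B(x)u^*(x)$. The approach is a maximum-principle argument on the flow. By Remark~\ref{rk:exponential_rv}, $\mu^*$ is the law of $x(T)$ with $T\sim\mathrm{Exp}(\rho)$ and $x(0)\sim\mu_0$. Let $\Phi_t$ be the optimal flow. Then
\begin{align*}
\mu^*(y)=\rho\int_0^\infty e^{-\rho t}\,(\Phi_t)_\#\mu_0(y)\,\mathrm dt .
\end{align*}
Whenever $\mathrm{div}\, b^*\ge -\lambda$ holds a.e., the push-forward density satisfies $(\Phi_t)_\#\mu_0\le \|\mu_0\|_\infty e^{\lambda t}$, by Liouville's formula for the Jacobian. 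This gives
\begin{align*}
\mu^*\le \|\mu_0\|_\infty\frac{\rho}{\rho-\lambda}\qquad\text{whenever }\rho>\lambda .
\end{align*}
So the core task is a one-sided lower bound on $\mathrm{div}\,b^*$. Since $u^*$ is only Lipschitz/semi-continuous, the bound must be justified weakly. I would work with a regularized $V^*_\varepsilon$, or with the fact that semi-concave functions have BV gradients whose singular part has a sign, and then pass to the limit.

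\textbf{Case (ii).} Use $u^*=\nabla R^*(-\rho^{-1}B^\top\nabla V^*)$. Its divergence involves $D^2V^*$ through $-\rho^{-1}\,\mathrm{tr}\bigl(D^2R^*\,B^\top D^2V^* B\bigr)$, plus lower-order terms. Those lower-order terms are controlled by Lemma~\ref{lem:bound_gradient} and by the derivatives of $B$ and $\alpha$.
\begin{itemize}
\item For $q=2$, $D^2R^*=I$. Semi-concavity from Lemma~\ref{lem:SC}, $D^2V^*\preceq C_{\rm SC}I$, then gives
\begin{align*}
\mathrm{div}\,b^*\ge -\|D a\|_\infty - C(\|B\|,\|DB\|)\bigl(C_u + \rho^{-1}C_{\rm SC}\|B\|^2\bigr).
\end{align*}
\item For $q<2$ and $\rho>\rho_1$, the same bound holds after noting that $D^2R^*(v)\preceq (q'-1)\|v\|^{q'-2}I$, which is bounded because $\|B^\top\nabla V^*\|$ is bounded.
\end{itemize}
The resulting $\lambda$ depends on $\rho$ only through $\rho^{-1}C_{\rm SC}$. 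I expect $C_{\rm SC}$ to grow at most polynomially in $1/\rho$, so $\lambda(\rho)$ stays bounded for large $\rho$ and $\rho>\lambda(\rho)$ holds for $\rho>\rho_0$. The main point to check here is how $C_{\rm SC}$ depends on $\rho$ in Lemma~\ref{lem:SC}; I would retrace its proof to confirm this.

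\textbf{Case (i).} Here $a\equiv0$ and $q<2$, so $q'>2$ and $R^*(v)=\|v\|^{q'}/q'$ degenerates at $v=0$. The only dangerous points are where the flow accumulates, i.e. where $B^\top\nabla V^*\approx 0$, so $u^*\approx 0$. Near such points $D^2R^*(v)\sim\|v\|^{q'-2}\to0$, and the bad negative contribution to $\mathrm{div}\,b^*$ vanishes. I would therefore split $\cX$ into two regions.
\begin{itemize}
\item Where $\|B^\top\nabla V^*\|\ge\delta$, the speed $\|u^*\|$ is bounded below. Trajectories cross this region in bounded time, so they contribute boundedly to $\mu^*$.
\item Where $\|B^\top\nabla V^*\|<\delta$, the divergence lower bound is $-C\delta^{q'-2}\rho^{-1}C_{\rm SC}$, which is smaller than $\rho$ for $\delta$ small.
\end{itemize}
Combining the two regions via the push-forward estimate along trajectories should give a bound valid for every $\rho$. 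Semi-concavity for $q<2$ requires $\rho>\rho_1$, but $\rho_1=0$ when $a\equiv0$, so Lemma~\ref{lem:SC} applies for every $\rho$. I expect this region-splitting argument, together with the rigorous weak justification of the divergence bound for a merely semi-concave $V^*$, to be the main obstacle.
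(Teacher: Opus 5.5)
Your general framework (push-forward along the optimal flow, Liouville's formula for the Jacobian, a one-sided bound on $\mathrm{div}\,b^*$ via semi-concavity) and your case (ii) are essentially the paper's argument. The paper's explicit $C_{\rm SC}$ has the form $\rho^{-1}(\cdots)+\text{const}$, so your open question about its $\rho$-dependence resolves favourably.

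\textbf{The gap is in case (i).} It sits at the step ``where $\|B^\top\nabla V^*\|\ge\delta$ the speed is bounded below, so trajectories cross this region in bounded time''. A lower bound on speed gives no control on the \emph{total} time a trajectory spends in a region of the torus. Nothing in your argument stops a trajectory from recirculating there or re-entering infinitely often. In that case $\ell$ grows at the full rate $\sup(-\mathrm{div}\,b^*)$, which need not be smaller than $\rho$. Separately, $\|u^*\|\ge c$ does not by itself bound the state speed $\|\dot x\|=\|Bu^*\|$ from below.

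The missing idea, and the place where the paper really uses $a\equiv0$, is a dissipation identity along optimal trajectories. Set $z(t)=-\rho^{-1}B^\top\nabla V^*(\Phi_t(x))$. Then
\[
\frac{\mathrm d}{\mathrm dt}V^*(\Phi_t(x))=\nabla V^{*\top}Bu^*=-\rho\|z(t)\|^{q'},
\qquad\text{hence}\qquad
\int_0^{\infty}\|z(s)\|^{q'}\,\mathrm ds\le\rho^{-1}\|f\|_\infty ,
\]
using $0\le V^*\le\|f\|_\infty$. This bounds the total time spent in $\{\|z\|\ge\eta\}$ by $T_\eta:=\|f\|_\infty/(\rho\,\eta^{q'})$. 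It also gives the correct speed bound $\|Bu^*\|\ge\rho\|z\|^{q'}/C_{\nabla V^*}$.

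The two regions must then be combined along each trajectory, not as separate ``contributions to $\mu^*$''. The density at a point of the small-$z$ region carries the Jacobian accumulated earlier, while the trajectory was in the large-$z$ region. Let $\Lambda$ be the global bound on $-\mathrm{div}\,b^*$ and $\epsilon_\eta=C_1\eta^{q'-1}+C_2\rho^{-1}\eta^{q'-2}$, the bound on $-\mathrm{div}\,b^*$ where $\|z\|<\eta$. Then
\[
\ell(t,x)\le\Lambda T_\eta+\epsilon_\eta t .
\]
Since $q'>2$, $\epsilon_\eta\to0$ as $\eta\to0$. Choosing $\eta$ with $\epsilon_\eta<\rho$ gives $\mu^*\le\|\mu_0\|_\infty e^{\Lambda T_\eta}\rho/(\rho-\epsilon_\eta)$ for every $\rho>0$.

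Repaired this way, your splitting is a Chebyshev-type variant of the paper's proof. The paper instead applies H\"older directly to the same dissipation bound, getting $\int_0^t\|z\|^{q'-1}\lesssim t^{1/q'}$ and $\int_0^t\|z\|^{q'-2}\lesssim t^{2/q'}$, i.e.\ sublinear growth of $\ell$. On regularity, the paper avoids your BV/mollification route. It invokes the result that $V^*$ is $\cC^2$ in a neighbourhood of the optimal trajectory from almost every starting point, and differentiates along trajectories classically.
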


\begin{remark} The control density $w^* = \mu^* u^* $ can remain bounded, even though~$\mu^*$ diverges. In dimension one with $g(x,u)=u$, Liouville's equation is: ${w^*}'= \rho (\mu_0 - \mu^*)$. Let $x_0$ a global minimizer of $f$. The null controller is optimal at this point, hence $w^*(x_0) = \mu^*(x_0) u^*(x_0) = 0$. Integrating Liouville's equation, we obtain:
\begin{align*}
 \forall x\in \cX, \quad   w^*(x) - w^*(x_0) = \rho \int_{x_0}^{x} (\mu_0(\xi) - \mu^*(\xi)) \, \mathrm  d \xi  \, .
\end{align*}
$\mu_0$ and $\mu^*$ being probability measures, this implies that $\forall x, |w^*(x)| \leq \rho$. Note that this reasoning does not extend beyond $d=1$, due to possible curl components in~$w^*$.
\end{remark}

\section{Conclusion} We have presented three different approaches to apply stochastic optimization schemes to control-affine optimal control problems, with convergence guarantees, some of them providing control over the performance of the controller. This is a first step towards extending the reliability of sample-based optimization methods for control problems, among which fall many RL algorithms, with guarantees approaching those existing for supervised learning problems. Several future directions could be considered.

First, this work only considers deterministic dynamics.
A natural next step would be to extend the approach to stochastic dynamics,
where the state is no longer described by an ODE but by an SDE.
In this setting, the HJB and Fokker--Planck--Kolmogorov equations become second-order, 
due to the additional diffusion term appearing in both equations;
in the case of uniform isotropic noise, this term reduces to a Laplacian. We believe that several aspects of the present analysis 
would need to be revisited in this stochastic setting.
On the one hand, if the noise is uniformly non-degenerate, 
Assumption~\ref{hypo:mustarbounded} should be automatically satisfied, 
and the solutions of the associated PDEs should enjoy additional regularity properties.
This would be very useful and could simplify parts of the analysis. 
On the other hand, some specific arguments would no longer extend directly.
For instance, the primal formulation would require additional care:
in the deterministic case with $a=0$,
the variable $\mu$ can be eliminated from \eqref{eqn:Ppen} 
because no derivative of $\mu$ appears after the reformulation.
In the presence of noise, the Fokker--Planck--Kolmogorov equation
contains a second-order term involving $\mu$,
and this elimination is no longer straightforward.

Second, %
in the current formulations, the states are only  sampled  according to the uniform distribution. Alternative sampling distributions, with importance sampling, could be considered to reduce the variance.  
Finally, while the presented convergence results rely on the use of linear non-parametric representations of $V^*$ and $w^*$ guaranteeing convexity, a promising practical direction would be to explore different approximation schemes. In particular, our algorithms are readily applicable with non-linear parameterizations, including deep neural networks, using automatic differentiation. 

\section*{Declaration of LLM usage} We have used LLMs to help us complete the proof of Proposition~\ref{prop:bound_mu*} in the $q<2$ case only, and for revision purposes, which allowed us to correct minor inconsistencies in some of the proofs, without impacting the results.

\section*{Acknowledgements}

This work benefited from  State fundings managed by the French National Research Agency (ANR) under the France 2030 program, with references ``Hi! PARIS'' (ANR-23-IACL-0005), ANR-11-IDEX-0003 within the OI H-Code,  and “PR[AI]RIE-PSAI” (ANR-23-IACL-0008).

\bibliographystyle{abbrv}
\bibliography{biblio}

\newpage

\appendix

\section{Proofs of results} \label{app:proofs}
\begin{proof}[Proof of Proposition \ref{prop:bound}] %
The Hamiltonian of the HJB equation is not coercive in $p$: %
if $p\in\ker\bigl(B(x)^\top\bigr)$, 
then $p^\top\bigl(a(x)+B(x)u\bigr)=p^\top a(x)$
is independent of $u$, so the control 
has no effect along the directions of $\ker(B(x)^\top)$.
This degeneracy prevents us from deriving 
the almost-everywhere existence of $\nabla V^*$
from the standard theory of non-degenerate HJB equations.
To circumvent it, we introduce 
the following regularized optimal control problem,
in which an auxiliary control $\tilde u\in\Rr^d$
acts in every direction:
\begin{align*}
   & V_{\eta}(x) 
   =
   \inf_{(u,\tilde{u}):\Rr_+ \rightarrow \Rr^m\times\Rr^d}
   \rho \int_0^{+\infty} e^{-\rho t} 
   \left( f(x_t) + R(u_t)+\frac{\|\tilde{u}_t\|^2}2 \right) \mathrm dt
   \\
    \nonumber  &  \text{such that } ~ \forall t \geq 0, 
    ~ \dot x_t = a(x_t) + B(x_t) u_t + \eta\tilde{u}_t \, ~\text{ and }  x_0=x \, .
\end{align*}
The bound of Proposition~\ref{prop:V*_bounded} carries over verbatim,
so $0\leq V_{\eta}\leq \|f\|_{\infty}$.
Solving the infimum over $\tilde u$ explicitly,
$V_{\eta}$ is the unique viscosity solution of
    \begin{equation*}
        - V_{\eta}(x) + f(x) + \rho^{-1} \nabla V_{\eta}(x)^\top a(x)
        - R^*\big(  - \rho^{-1} B(x)^\top \nabla V_{\eta}(x)  \big)
        -\frac{\eta^2}2\|\nabla V_{\eta}(x)\|^2= 0.   
    \end{equation*}
The augmented control matrix $\big[\,B(x)\mid \eta\, I_d\,\big]$
now has full rank, 
so the regularized problem is non-degenerate;
by standard results~\cite{bardi1997optimal}, 
its value function $V_{\eta}$ is Lipschitz continuous,
and hence, by Rademacher's theorem, 
differentiable almost everywhere.
At every point of differentiability the viscosity equation
holds in the classical sense.

We now derive a bound on
$r_{\eta}(x):=-\rho^{-1}B(x)^\top\nabla V_{\eta}(x)$
that is uniform in $\eta$.
Using $a=B\alpha$ (Assumption~\ref{hypo:alpha}),
the drift term rewrites as
$\rho^{-1}\nabla V_\eta^\top a 
= -\alpha^\top r_\eta$, 
so for a.e.\ $x\in\cX$ the HJB  equation gives
\begin{align*}
    \frac1{q'}\|r_{\eta}(x)\|^{q'}
    +\frac{\eta^2}2\|\nabla V_{\eta}(x)\|^2
    &=
    R^*(r_{\eta}(x))
    +\frac{\eta^2}2\|\nabla V_{\eta}(x)\|^2
    \\
    &=
    -V_{\eta}(x) +f(x) 
    -\alpha(x)^{\top}r_{\eta}(x)
    \\
    &\leq
    \|f\|_{\infty}
    +\frac1{q'}\left\|2^{-\frac1{q'}}r_{\eta}(x)\right\|^{q'}
    +\frac1q\left\|2^{\frac1{q'}}\alpha(x)\right\|^{q}
    \\
    &\leq
    \frac{\|r_{\eta}(x)\|^{q'}}{2q'}
    +\|f\|_{\infty}
    +\frac{2^{q-1}}q\|\alpha\|^q_{\infty},
 \end{align*}
where we used $V_{\eta}\geq 0$ and Young's inequality 
$\alpha^\top r \leq \tfrac1q\|\,\cdot\,\|^q + 
\tfrac1{q'}\|\,\cdot\,\|^{q'}$ with weight $2^{1/q'}$.
Absorbing the $r_\eta$ term on the left 
and multiplying by $2q'$ yields
\[
   \|r_{\eta}\|_{\infty}^{q'} \leq 2q'\,\|f\|_{\infty} + 2^{q}(q'-1)\,\|\alpha\|_{\infty}^{q} =: C_r^{q'} .
\]
The crucial point is that this bound is independent of $\eta$.
Note that the above computations also imply  
$\frac{\eta^2}2\|\nabla V_{\eta}(x)\|^2\leq \frac{C^{q'}_r}{2q'}\leq C^{q'}_r$.
 
It remains to pass to the limit $\eta\to 0$. 
On the one hand, the additional term
$p\mapsto\tfrac{\eta^2}{2}\|p\|^2$ converges to $0$
locally uniformly in $p$, so by stability of viscosity solutions 
$V_{\eta}\to V^\ast$ uniformly on $\cX$. 
On the other hand, $(r_{\eta})_\eta$ is bounded in $L^{\infty}(\cX)$.
Since $L^{\infty}(\cX)$ is the dual of $L^1(\cX)$,
its closed balls are weak-$*$ compact by the Banach--Alaoglu theorem;
hence, along a subsequence, $r_{\eta}$ converges weak-$*$
to some $r\in L^{\infty}(\cX)$ with $\|r\|_{\infty}\leq C_r$.

To identify $r$,
fix a test function $\varphi\in\cC^{\infty}(\cX;\Rr^m)$.
Since $B$ is Lipschitz and $\varphi$ is smooth,
$B\varphi\in W^{1,\infty}(\cX)$,
and integration by parts on the torus 
(with no boundary term, by periodicity) gives
\begin{align*}
\int_{\cX}\varphi(x)^{\top}r_{\eta}(x)\, \mathrm dx
    &=
    -\frac1{\rho}\int_{\cX}(B(x)\varphi(x))^{\top}\nabla V_{\eta}(x)\, \mathrm dx \\
    &
    =
    \frac1{\rho}\int_{\cX}{\rm div}_x(B(x)\varphi(x))V_{\eta}(x)\, \mathrm dx \, .
\end{align*}
The left-hand side converges to 
$\int_{\cX}\varphi^\top r\,\mathrm dx$
by weak-$*$ convergence,
while the right-hand side converges to
$\frac{1}{\rho}\int_{\cX}{\rm div}_x(B\varphi)\,V^*\,\mathrm dx$
by uniform convergence of $V_{\eta}$.
Therefore $-\rho^{-1}B^\top\nabla V^* = r$
in the sense of distributions,
and hence almost everywhere,
since $r\in L^{\infty}(\cX)$.

In summary, $r=-\rho^{-1}B^\top\nabla V^\ast$ 
is well-defined almost everywhere 
and satisfies $\|r\|_{\infty}\leq C_r$.
Consequently the optimal control 
$u^\ast(x)=\nabla R^\ast\bigl(r(x)\bigr)$
is defined a.e., and
\begin{align*}
    \|u^*(x)\|=\|\nabla R^*(r(x))\|=\|r(x)\|^{q'-1}\leq C_u^{(q-1)(q'-1)}=C_u
\qquad\text{for a.e.\ } x\in\cX,
\end{align*}
which concludes the proof.

\end{proof}

\begin{proof}[Proof of Lemma~\ref{lemma1}] %
    Let $\varphi : \cX \to \Rr$ a  continuously differentiable test function (hence uniformly bounded) and $x_0 \in \cX$. We have, using Fubini's theorem:
    \begin{align}
        \langle \nu_{x_0}, \varphi \rangle &= \int_\cX \varphi(x) \nu_{x_0}(x) \, \mathrm dx \nonumber \\
        & = \int_0^{+\infty} \rho e^{-\rho t} \int_\cX \varphi(x) \indic_{\mathrm dx}(x(t \mid x(0)=x_0)) \, \mathrm dx \, \mathrm  dt \nonumber \\
        & = \int_0^{+\infty} \rho e^{-\rho t} \varphi(x(t \mid x(0) = x_0)) \, \mathrm dt \, . \label{eqn:pptnu}
    \end{align}
Hence integration with respect to $\nu_{x_0}$ is equivalent to discounted time-integration along the trajectory starting from~$x_0$. Integrating by parts, we obtain:
\begin{align*}
    \langle \nu_{x_0}, \varphi \rangle &= \varphi(x_0) + \int_0^{+\infty} e^{-\rho t} \frac{\mathrm d}{\mathrm dt}\left(\varphi(x(t \mid x(0)=x_0)) \right)\, \mathrm dt \\
    &= \varphi(x_0) + \frac{1}{\rho}\int \rho e^{-\rho t} \nabla \varphi(x(t \mid x(0)=x_0))^\top g(x(t \mid x(0)=x_0)) \, \mathrm dt \\
    &= \langle  \delta_{x_0}, \varphi \rangle + \frac{1}{\rho} \langle \nu_{x_0},  \nabla \varphi^\top g \rangle  ,
\end{align*}
using~\eqref{eqn:pptnu} on the test function $\nabla \varphi^\top g$. We can then derive the expression of the adjoint of the linear operator $ \varphi \mapsto \nabla \varphi^\top g$ using an integration by parts on $\cX$:
\begin{align*}
    \langle \nu_{x_0}, \nabla \varphi^\top g \rangle  = \int_\cX \nabla \varphi(x)^\top g(x) \nu_{x_0}(x) \mathrm dx 
    = - \langle \text{div}(g \nu_{x_0}),  \varphi \rangle \, .
\end{align*}

Since the relation $ \langle \nu_{x_0}, \varphi \rangle  = \langle \delta_{x_0}, \varphi \rangle -  \langle \text{div}(g \nu_{x_0}), \varphi \rangle $ is true for any test function~$\varphi$, in the sense of distributions, we have:
\begin{align*}
    \nu_{x_0} =  \delta_{x_0} - \frac{1}{\rho}\text{div}(g \nu_{x_0}),
\end{align*}
and, integrating this with respect to $\mu_0$, we get:
\begin{align*}
    \nu =  \mu_0 - \frac{1}{\rho}\text{div}(g \nu) \, .
\end{align*} 
\end{proof}

\begin{proof}[Proof of Lemma \ref{lem:primal_on_proba}]
    Define $\mathds{1}_{\cX}:x\in\cX\mapsto 1$
    and $\mathds{1}_{\cX\times\Rr^m}:(x,u)\in\cX\times\Rr^m\mapsto 1$
    and observe that $\cL\mathds{1}_{\cX}=\mathds{1}_{\cX\times\Rr^m}$.
    Then, recall that $\mu_0 \in \cP(\cX)$,
    so that for $\nu\in\cM_+(\cX\times\Rr^m)$
    feasible for \eqref{P}, we have
    \begin{equation*}
        \langle\mathds{1}_{\cX\times\Rr^m},\nu\rangle
        =
        \langle\cL\mathds{1}_{\cX},\nu\rangle
        =
        \langle\mathds{1}_{\cX},\cL^*\nu\rangle
        =
        \langle\mathds{1}_{\cX},\mu_0\rangle
        =
        1.
    \end{equation*}
    This and $\nu\in\cM_+(\cX\times\Rr^m)$ imply
    that $\nu\in\cP_c(\cX\times\Rr^m)$.

    The remainder of the proof shows that using such a feasible $\nu$,
    we can create another measure $\widetilde{\nu}$
    that is feasible and supported on the graph of a bounded function
    such that it admits at most the same cost
    as $\nu$ with respect to the minimization problem \eqref{P}.
    
    First, use once again the disintegration theorem
    and get $\nu(\mathrm dx, \mathrm du)=\nu_x(\mathrm du)\mu(\mathrm dx)$.
    We can define the averaged control
    $\widetilde u:x\in\cX\mapsto
    \int_{\Rr^m}u\,\nu_x(\mathrm du)$
    and the concentrated measure
    $\widetilde\nu(\mathrm dx,\mathrm du)
    :=\delta_{\tilde u(x)}(\mathrm du)\,\mu(\mathrm dx)$.
    It is straightforward to check that
    $\widetilde{\nu}$ is a probability measure.
    Its support is included in
    the convex hull of the support of $\nu$,
    which makes it compact, so that 
    $\widetilde{\nu}\in\cP_c(\cX\times\Rr^m)$.
    Moreover, $\nu$ and $\widetilde{\nu}$ share the
    same first marginal and
    we have $\int_{\Rr^m}b(x,u)\,\nu_x(\mathrm du)=b(x,\widetilde{u}(x))$
    for all $x\in\cX$, %
    so that $\widetilde{\nu}$ satisfies
    $\cL^*\widetilde{\nu}=\mu_0$ just as $\nu$ does.
    Therefore, $\widetilde{\nu}$ is feasible
    and Jensen's inequality implies 
    \begin{equation*}
        \iint_{\cX\times\Rr^m} R(u)\,\widetilde{\nu}(\mathrm dx, \mathrm du)
        =
        \int_{\cX} R\left(\int_{\Rr^m}u\,\nu_x(\mathrm du)\right)\mu(\mathrm dx)
        \leq
        \iint_{\cX\times\Rr^m} R(u)\,\nu(\mathrm dx, \mathrm du),
    \end{equation*}
    while $\iint f\widetilde{\nu}= \iint f\nu$
    remains unchanged.
    Therefore, $\widetilde{\nu}$ has
    no larger cost than $\nu$.
    Moreover, if $\nu$ is not supported on the graph
    of a function, the strict convexity
    of $R$ implies that the Jensen inequality above is in
    fact a strict inequality making the 
    cost of $\widetilde{\nu}$ being 
    lower than this of $\nu$.
    This concludes the proof.
\end{proof}

\begin{proof}[Proof of Theorem~\ref{thm_duality}]
\begin{figure}
    \centering
        \begin{tikzpicture}[every node/.style={scale=0.8}]
		\node[draw=OrangeRed!55, minimum size=2cm, very thick,fill=OrangeRed!20, initial] (s0) {P};
         \node[circle,draw,text=white,fill=white,draw=white,minimum size=0.5cm] (c) at (-1.05,0){}; 
		\node[draw=OrangeRed!55, minimum size=2cm, very thick,fill=OrangeRed!20, right of=s0] (s1) {$\text{Pc}$};
		\node[draw=Cerulean!55, minimum size=2cm, very thick,fill=Cerulean!20, below of=s0] (s2) {D};
		\node[draw=Cerulean!55, minimum size=2cm, very thick,fill=Cerulean!20, right of=s2] (s3) {$\text{Dc}$};
		\node[draw=orange!55, minimum size=2cm, very thick,fill=orange!20, below of=s2] (s7) {OCP};
		\node[draw=orange!55, minimum size=2cm, very thick,fill=orange!20, right of=s7] (s8) {$\text{OCP}_\cU$};
		
		\draw (s0) edge[left] node[above]{$=$} (s1);
		\draw (s0) edge[left] node[below]{\textit{(2.iii)}} (s1);

            \draw (s7) edge[left] node[above]{$=$} (s8);
		\draw (s7) edge[left] node[below]{\textit{(Cor.~\ref{cor_constr})}} (s8);

            \draw[dotted,thick] (s0) edge[bend right=40] node[below left, sloped]{$\geqslant$} (s2);
		\draw[dotted,thick] (s0) edge[bend right=40] node[below left]{\textit{(1.ii)}} (s2);

            \draw (s0) edge[bend left=40] node[above right]{~$=$} (s2);
		\draw (s0) edge[bend left=40] node[below right]{\textit{(2.iii)}} (s2);

            \draw (s1) edge[left] node[above right]{$=$} (s3);
		\draw (s1) edge[left] node[below right]{\textit{(2.i)}} (s3);
        \draw (s1) edge[left] node[left]{\textcolor{red}{\textbf{\cite{anderson1983review}}}} (s3);

            \draw[dotted,thick] (s2) edge[bend right=40] node[below left, sloped]{$\geqslant$} (s7);
		\draw[dotted,thick] (s2) edge[bend right=40] node[below left]{\textit{(1.iii)}} (s7);

            \draw (s2) edge[bend left=40] node[above right]{~$=$} (s7);
		\draw (s2) edge[bend left=40] node[below right]{\textit{(2.iii)}} (s7);
      
		\draw (s2) edge[left] node[below]{\textit{(2.iii)}} (s3);
		\draw (s2) edge[left] node[above]{$=$} (s3);

            \draw (s3) edge[left] node[below right]{\textit{(2.ii)}} (s8);
		\draw (s3) edge[left] node[above right]{~$=$} (s8);
        \draw (s3) edge[left] node[left]{\textcolor{red}{\textbf{\cite{bardi1997optimal}}}} (s8);

	\end{tikzpicture}
    \caption{Sketch of the proof of Theorem~\ref{thm_duality}. Primal LPs appear in pink, dual LPs in blue, and OCPs in orange. The references to the  central arguments supporting the theorem are written in red.}
    \label{fig:sketchofproof}
\end{figure}

To prove the first claim, we prove, in turn, that \eqref{P} and \eqref{D}
are feasible \textit{(1.i)}, that \eqref{D}
is the Lagrange dual of problem \eqref{P} \textit{(1.ii)},
and that $\int_\cX V^* \mathrm d\mu_0$ is a lower-bound on \text{val}(D) \textit{(1.iii)}. 
    
    \textit{(1.i) \eqref{P} and \eqref{D} are feasible.} Let $V_0 \equiv 0$. 
    This function is $\cC^1$ and satisfies
\begin{align*}
    \forall (x,u) \in \cX \times \Rr^m , \,  -  V_0(x) +  f(x) + R(u) + \frac{1}{\rho} \nabla V_0(x) ^\top b(x, u) = f(x) + R(u)\geq 0.
\end{align*}
Hence $V_0$ is feasible for \eqref{D}, and \eqref{D} is feasible.
We now consider the controller $u_0 \equiv 0$. The trajectories generated by $u_0$ are such that \begin{align*}
    \dot x(t) = b(x(t), 0) = a(x(t)) .
\end{align*}
Using Lemma~\ref{lemma1}, the corresponding occupation measure $\widetilde{\mu}(x)$ is such that
\begin{align*}
     \widetilde{\mu} = \mu_0 - \frac{1}{\rho} \textnormal{div}(a \widetilde{\mu}) \, .
\end{align*}
Now let $\widetilde{\nu} \in \cP(\cX \times \cU)$
defined as $\widetilde{\nu}(\mathrm dx, \mathrm du):=
\widetilde{\mu}(\mathrm dx) \delta_{0}(\mathrm du)$,
with first marginal $\widetilde{\mu}$.
Its averaged drifts are $\int_{\Rr^m}b(x,u)\widetilde{\nu}_x(du) =
b(x,0)=a(x)$,
so the above Liouville equation implies that $\widetilde{\nu}$
is feasible for \eqref{P}, so \eqref{P} is feasible.

    \textit{(1.ii) \eqref{D}
    is the Lagrange dual of \eqref{P}.} 
    Consider problem \eqref{P}, which is a linear program
    with a constraint posed in $C^1(\cX)^*$.
    Therefore, a Lagrange multiplier $V$ may be taken in $C^1(\cX)$
    to formulate \eqref{P} as follows
    \begin{align*}
        {\rm val}\eqref{P}
        &= 
        \inf_{\nu\in\cM_+(\cX\times\Rr^m)}
        \left\{
        \iint_{\cX\times\Rr^m} (f(x)+R(u))\,\nu(\mathrm dx, \mathrm du)
        +\sup_{V\in\cC^1(\cX)}\langle V,\mu_0-\cL^*\nu\rangle\right\}
        \\
        &=
        \inf_{\nu\in\cM_+(\cX\times\Rr^m)}
        \sup_{V\in\cC^1(\cX)}
        \iint_{\cX\times\Rr^m} (f+R)\,\nu
        +\int_{\cX}V\,\mu_0
        -\langle\cL V,\nu\rangle.
    \end{align*}
    Then using the inequality $\inf \sup\geq\sup\inf$, we can recover
    the Lagrange dual of \eqref{P} and the desired inequality
    involving \eqref{D}:
    \begin{align*}
        {\rm val}\eqref{P}
        &\geq 
        \sup_{V\in\cC^1(\cX)}
        \inf_{\nu\in\cM_+(\cX\times\Rr^m)}
        \iint_{\cX\times\Rr^m} (f+R)\,\nu
        +\int_{\cX}V\,\mu_0
        -\langle\cL V,\nu\rangle
        \\
        &=
        \sup_{V\in\cC^1(\cX)}
        \int_{\cX}V\,\mu_0
        +\inf_{\nu\in\cM_+(\cX\times\Rr^m)}
        \langle-\cL V+f+R,\nu\rangle
        \\
        &=
        {\rm val}\eqref{D}.
    \end{align*}
    We just proved val\eqref{P}$\geq$val\eqref{D}.

    \textit{(1.iii) \eqref{eq:OCP} has not a larger value
    than \eqref{D}.}
    We define the Hamiltonian $H$ as follows, 
    for $(x,p)\in\cX\times\Rr^d$ %
    \begin{align*} 
        H(x,p)
        &=
        \sup_{u\in\Rr^m}\{- \rho^{-1}p^{\top}b(x,u)-R(u)-f(x)\}\\
        &=
        R^*(-\rho^{-1}B(x)^{\top}p)-\rho^{-1}a(x)^{\top}p-f(x).
    \end{align*}
    Observe that $H$ is convex in $p$ %
    and it is Lipschitz continuous in $x$ on any compact
    $\cX\times \overline{B}_{\Rr^d}(0,r)$ for $r>0$,
    let us denote $C(r)$ its associated Lipschitz constant.
    Consider $V_{\eta}$ defined as in the proof of Proposition~\ref{prop:bound}.
    It satisfies
    \begin{equation}
        \label{eq:PDE_V_eta}
        V_{\eta}+H(x,\nabla_x V_{\eta})
        =
        -\frac{\eta^2}2\|\nabla_x V_{\eta}\|^2
        \leq
        0
    \end{equation}
    in the sense of viscosity and at any point of differentiability.
    From the proof of Proposition \ref{prop:bound}, we have
    that $V_{\eta}$ is Lipschitz continuous with constant
    $\eta^{-1} C^{\frac{q'}2}_r$.
    By Rademacher's theorem, it is differentiable almost everywhere
    and
    $\|\nabla_xV_{\eta}\|_{L^{\infty}}\leq \eta^{-1} C^{\frac{q'}2}_r$.
    Now, define $V_{\eta,\varepsilon}=\chi_{\varepsilon}\star V_{\eta}$ with 
    $\chi_{\varepsilon}$ a $\cC^{\infty}$-mollifier supported in 
    $\overline{B}_{\cX}(0,\varepsilon)$
    that converges to $\delta_0$
    in the sense of distribution
    as $\varepsilon$ tends to zero.
    Since $H$ is convex with respect to its second entry, we have, by Jensen's inequality: %
    \begin{align*}
        H(x,\nabla_x V_{\eta,\varepsilon})
        &=
        H\left(x,\int_{\cX}\nabla_x V_{\eta}(x-y)\chi_{\varepsilon}(y)\,\mathrm dy\right)
        \\        
        &\leq
        \int_{\cX}
        H(x,\nabla_x V_{\eta}(x-y))
        \chi_{\varepsilon}(y) \, \mathrm dy
        \\
        &\leq
        \int_{\cX}
        H(x-y,\nabla_x V_{\eta}(x-y))
        \chi_{\varepsilon}(y) \, \mathrm dy
        +C(\|\nabla_xV_{\eta}\|_{L^{\infty}})\varepsilon,
    \end{align*}
    where we used the local Lipschitz continuity of
    $H$ in $x$ and the definition of $\chi_{\varepsilon}$
    to get the latter inequality.
    Using the latter inequality and \eqref{eq:PDE_V_eta},
    with $\varepsilon=\varepsilon(\eta):= C(\eta^{-1}C_r^{\frac{q'}2})^{-1}\eta^{-1}$,
    we obtain,
    for almost every $x\in\cX$,
    \begin{equation*}
        V_{\eta,\varepsilon(\eta)}(x)+
        H(x,\nabla_x V_{\eta,\varepsilon(\eta)})
        \leq
        \eta.
    \end{equation*}
    Recall that $V_{\eta,\varepsilon(\eta)}$ is $\cC^{\infty}$ so the latter
    inequality holds for all $x\in\cX$.
    This implies that
    $V_{\eta,\varepsilon(\eta)}-\eta$
    is a $\cC^{\infty}$
    subsolution of the HJB equation,
    in particular it is feasible for \eqref{D}.
    Therefore, we obtain
    \begin{equation*}
        {\rm val}\eqref{D}
        \geq
        \int_{\cX}(V_{\eta,\varepsilon(\eta)}-\eta)\mu_0
         \xrightarrow[\eta\to 0]{}
        \int_{\cX}V^*\mu_0,
    \end{equation*}
    since $V_{\eta,\varepsilon(\eta)}$ converges to $V^*$
    in $\cC^0(\cX)$
    where we used that $V_{\eta}$ converges to $V^*$ in $\cC^0(\cX)$.
    This concludes the first item of Theorem \ref{thm_duality}.\\

To prove the second and third claims of the theorem, we first prove them for the constrained problem  defined in Corollary~\ref{cor_constr}, with its corresponding primal and dual problems \eqref{PU} and \eqref{DU}, defined below (steps \textit{(2)} and \textit{(3)}).  This intermediate step is required because existing results for both claims require the control set to be compact, which is not the case in our setting.
Afterwards, simple manipulations and Corollary~\ref{cor_constr} will allow us to transfer these results for the true unconstrained problem and its primal and dual weak formulations (step \textit{(4)}).

First, let us define $\cL_{\cU}$ similarly as $\cL$
but with $\cU$ as the set of admissible control:
\begin{equation*}
    \cL_{\cU}:
    \left\{
    \begin{aligned}
        \; \cC^1(\cX)&  \rightarrow \cC(\cX \times \cU)
        \\
        \varphi ~~ &\mapsto  \Bigl\{ (x, u) 
        \mapsto
        \varphi(x) - \frac{1}{\rho} \nabla \varphi(x)^\top b(x, u) \Bigr\}.
\end{aligned}
\right.
\end{equation*}
We are now in position to define the primal and
dual problems with $\cU$ as the set of admissible control:
\begin{align} 
    \label{PU}\tag{Pc}
    &\inf_{\nu \in \cM_+(\cX \times \cU)} 
    \iint_{\cX\times\Rr^m}
    (f(x)+R(u))\,\nu(\mathrm dx, \mathrm du)
    \hspace*{0.3cm}
    \text{such that}
    \hspace*{0.3cm}
    \cL_{\cU}^* \nu = \mu_0 \, .
    \\
    \label{DU}\tag{Dc}
    &\sup_{V \in \cC^1(\cX)}
    \int_{\cX}V(x)\,\mu_0(x) \, \mathrm dx
    \hspace*{0.3cm}
    \text{such that}
    \hspace*{0.3cm}
    \cL_{\cU} V \leq f + R_{|\cU} \, .
\end{align} 

Repeating the arguments in the proof of
the first item of Theorem \ref{thm_duality},
we obtain the \eqref{PU} and \eqref{DU} are feasible
and satisfy
\begin{equation*}
    \textnormal{val}\eqref{PU}  
    \geq
    \textnormal{val}\eqref{DU} 
    \geq 
    \int_\cX V^*(x) \mu_0(x) \,  \mathrm dx \,,
\end{equation*}
where we used
Corollary \ref{cor_constr} 
to get that the optimal value
of \eqref{eq:OCP} remains unchanged
if we constrain the set of admissible control
to be $\cU$.
Below, we will prove that the two inequalities 
above are in fact equalities.
The proof relies on the fact that $\cU$ is compact.

\textit{(2.i) Prove the strong duality between
\eqref{PU} and \eqref{DU}.}
Using Theorem 5 from~\cite{anderson1983review},
a sufficient condition for strong duality to hold,
i.e., val\eqref{PU}=val\eqref{DU}, is that the set $\cH_{\cU}$ defined by
\begin{align*}
    \cH_\cU := \left\{ \left( \cL^*_\cU(\mu),  \langle  f+R, \mu \rangle \right) ~\mid ~ \mu \in \cM_+(\cX \times \cU) \right\} 
\end{align*}
is closed. 
Define $\mathcal{T}_{\cU}:\mu\in\cM_+(\cX \times \cU)\mapsto \left( \cL^*_\cU(\mu),  \langle  f+R, \mu \rangle \right)\in \cC^1(\cX)^*\times \Rr$.
Observe that $\mathcal{T}_{\cU}$ is weak$^*$ continuous since $\cL_{\cU}$ is bounded.
For any $r>0$, using the Banach-Alaoglu theorem~\cite[Theorem 3.15]{rudin}, the closed ball  $B_r \subset \cM(\cX \times \cU)$ of radius $r$ %
is weak$^*$ compact.
Since the image of a compact by a continuous map is itself compact,
we deduce that $\mathcal{T}_{\cU}(B_r)$ is weak$^*$ compact,
so it is weakly$^*$ closed, so it is strongly closed.

We are now in position to prove that $\cH_{\cU}$ is closed.
Consider a sequence $( \mu_n )_{n \geq 1}$ of elements of $\cM_+(\cX \times \cU)$ such that: 
\begin{align*}
    \left\{
    \begin{array}{ll}
       \cL^*_\cU(\mu_n) & \xrightarrow[n \to +\infty]{} a \\
       \langle f+R, \mu_n \rangle & \xrightarrow[n \to +\infty]{}  b \, ,
    \end{array}
\right.
\end{align*}
for some $(a, b) \in \cC^1(\cX)^* \times \Rr$.  In particular, integrating the first limit with respect to the test function $\varphi_0 : x \mapsto 1$, we get:
\begin{align*}
    \langle \varphi_0 , \cL_\cU^*(\mu_n) \rangle &= \langle   \cL_\cU( \varphi_0) , \mu_n \rangle \\
    &= \langle   1, \mu_n \rangle \\
    &= \mu_n(\cX \times \cU) \xrightarrow[n \to +\infty]{}   \langle  \varphi_0, a  \rangle \, .
\end{align*}
Hence there exists $n_0 \geq 1$ and a closed ball  $B_r$ of $\cM(\cX \times \cU)$ (for the total variation norm) such that for all $n \geq n_0$, $\mu_n \in B_r$.
This and the fact that $\mathcal{T}_{\cU}(B_r)$ is closed imply that $(a,b)\in\mathcal{T}_{\cU}(B_r)\subset\cH_{\cU}$.
In turn, we obtain that $\cH_{\cU}$ is closed.   From that, we deduce from~\cite{anderson1983review}, that strong duality holds for the constrained primal and dual problems:
\begin{align*}\text{val}(\text{Pc}) = \text{val}(\text{Dc}) \, .\end{align*}

\textit{(2.ii) Prove that \eqref{DU} and \eqref{eq:OCP} share 
the same value.}
We already have one inequality;
it only remains to prove val\eqref{DU}$\leq \int_{\cX}V^*\mu_0$
which will be obtained using a standard comparison principle
for viscosity subsolutions and supersolutions.

Define $H_{\cU}$, the Hamiltonian with $\cU$ as the admissible set
of controls, by %
    \begin{equation*} 
        H_{\cU}(x,p)
        =
        \sup_{u\in\cU}\{-p^{\top}b(x,u)-R(u)-f(x)\}.
    \end{equation*}
    It satisfies the condition, for $x,y\in\cX$ and $p\in\Rr^d$,
    \begin{equation}
        \label{eq:cond_comparison}
        |H_{\cU}(x,p)-H_{\cU}(y,p)|
        \leq
        C\|x-y\|(1+\|p\|),
    \end{equation}
    with $C$ a constant obtained using
    $f,a,B$ be $\cC^1$ from Assumption
    \ref{hypo:regularity};
    more precisely, it can be taken as
    $C=\|\nabla_xf\|_{\infty}+\rho^{-1}\|D_xa\|_{\infty}
    C_u\|D_xB\|_{\infty}$.

    Taking $V$ feasible for \eqref{PU},
    it is a subsolution of
    $V+H_{\cU}(x,\nabla_xV)=0$.
    Using Corollary \ref{cor_constr},
    $V^*$ is a viscosity solution of the same HJB equation,
    in particular it is a viscosity super solution.
    Because $H_{\cU}$ satisfies Condition \eqref{eq:cond_comparison},
    a standard comparison principle \cite{bardi1997optimal}
    implies $V\leq V^*$.
    Taking the integral over $\mu_0$ and the supremum over
    $V$ feasible for \eqref{PU}, we conclude
    \begin{equation*}
        {\rm val}\eqref{PU}
        \leq 
        \int_{\cX}V^*\,\mu_0.
    \end{equation*}

    \textit{(2.iii) Conclude the remainder of Theorem \ref{thm_duality}.}
    Observe that any $V$ feasible for \eqref{PU} is
    also feasible for \eqref{P}, so that we
    have val\eqref{P}$\leq$val\eqref{PU}.
    This and the results from the previous steps imply
    \begin{equation*}
        \int_{\cX}V^*\,\mu_0
        \leq
        {\rm val}\eqref{D}
        \leq
        {\rm val}\eqref{P}
        \leq
        {\rm val}\eqref{PU}
        =
        {\rm val}\eqref{DU}
        =
        \int_{\cX}V^*\,\mu_0.
    \end{equation*}
    Consequently, the latter inequalities are in fact equalities,
    which concludes the proof.

\end{proof}

\begin{proof}[Proof of Proposition~\ref{prop:sgd}]
    Let $\theta \in \cH$ and $n \geq 1$. The following decomposition holds:
    \begin{align*}
        \| \theta_n - \theta \|^2 &=  \| \theta_{n-1} - \theta \|^2 \\&  - 2 \gamma \langle \rho_1'(g_1(\theta_{n-1}, z_n)) g_1'(\theta_{n-1}, z_n) + \rho_2'(g_2(\theta_{n-1}, z_n)) g_2'(\theta_{n-1}, z_n), \theta_{n-1}-\theta \rangle \\
        &  + \gamma^2 \| \rho_1'(g_1(\theta_{n-1}, z_n)) g_1'(\theta_{n-1}, z_n) + \rho_2'(g_2(\theta_{n-1}, z_n)) g_2'(\theta_{n-1}, z_n) \|^2 \\
        & \leq  \| \theta_{n-1} - \theta \|^2 \\
        &\quad + \rho_1'(g_1(\theta_{n-1}, z_n)) (-2 \gamma \langle g_1'(\theta_{n-1}, z_n), \theta_{n-1}-\theta \rangle + 2\gamma^2 \alpha_1 \|g_1'(\theta_{n-1}, z_n)\|^2 ) \\
        &\quad + \rho_2'(g_2(\theta_{n-1}, z_n)) (-2 \gamma \langle g_2'(\theta_{n-1}, z_n), \theta_{n-1}-\theta \rangle + 2\gamma^2 \alpha_2 \|g_2'(\theta_{n-1}, z_n)\|^2 ), 
    \end{align*}
    using the fact that the $\rho'_i \in [0, \alpha_i]$.
    Moreover,  for $\beta=\gamma\max\{4\alpha_iL_i,1\leq i\leq 2\}\leq 1$,
    using successively the co-coercivity of $g'_i$
    for $i \in \{1, 2\}$~\cite{bubeck2015convex},
    and the Cauchy-Schwarz inequality,
    we obtain:
    \label{coco}
    \begin{align*}
    2\gamma\alpha_i\| g'_i(\theta_{n-1}, z_n) \|^2 
    & \leq 
    4\gamma\alpha_i\left( \| g'_i(\theta, z_n) \|^2 
    + \| g'_i(\theta_{n-1}, z_n) - g'_i(\theta, z_n) \|^2\right) \\
     & \leq
     4\gamma\alpha_i\left( \| g'_i(\theta, z_n) \|^2 +
     L_i \langle g_i'(\theta_{n-1}, z_n) -  g_i'(\theta, z_n), \theta_{n-1} - \theta \rangle\right) \\
     & \leq
     4\gamma\alpha_i \| g'_i(\theta, z_n) \|^2 +
     \beta \langle g_i'(\theta_{n-1}, z_n) -  g_i'(\theta, z_n), \theta_{n-1} - \theta \rangle \\
    & \leq
    4\gamma\alpha_i\| g'_i(\theta, z_n) \|^2 
    +\beta \langle g_i'(\theta_{n-1}, z_n) , \theta_{n-1} - \theta \rangle \\
        & \qquad\qquad\qquad~~~ +\beta \|g_i'(\theta, z_n)\| \|\theta_{n-1}-\theta\| \, . %
    \end{align*}
    The latter two chains of inequalities imply that,
\begin{align*}
    \| \theta_n - \theta \|^2 
    &\leq
    \| \theta_{n-1} - \theta \|^2  + \sum_{i=1}^2 \rho_i'(g_i(\theta_{n-1}, z_n)) 
    \left\{ (2-\beta)\gamma \langle g_i'(\theta_{n-1}, z_n), \theta-\theta_{n-1} \rangle  \right. \\
    & \left. \qquad\qquad\qquad + 4 \gamma^2 \alpha_i \| g_i'(\theta, z_n)\|^2 
    + \beta\gamma \|g_i'(\theta, z_n)\| \|\theta_{n-1}-\theta\| \right\} \\
    &\leq
    \| \theta_{n-1} - \theta \|^2  
    + \sum_{i=1}^2 \left\{\rho_i'(g_i(\theta_{n-1}, z_n)) (2-\beta)\gamma \langle g_i'(\theta_{n-1}, z_n), \theta-\theta_{n-1} \rangle  \right. \\
    & \left. \qquad\qquad\qquad + 4 \gamma^2 \alpha_i^2 \| g_i'(\theta, z_n)\|^2 
    + \beta\gamma\alpha_i \|g_i'(\theta, z_n)\| \|\theta_{n-1}-\theta\| \right\} \, .
\end{align*}
Taking the expectation,
first conditionally on $\theta_{n-1}$ and then a full expectation, 
and using Jensen's inequality
$\Ee[\|g_i'(\theta, z_n)\| \|\theta_{n-1}-\theta\|]^2\leq
\Ee[\|g_i'(\theta, z_n)\|^2]
\Ee[\|\theta_{n-1}-\theta\|^2]$, we obtain:
\begin{equation}
\label{eq:aux_sgd_dual}
\begin{aligned}
    \Ee[ \| \theta_n - \theta \|^2 ] 
    & \leq
    \Ee[ \| \theta_{n-1} - \theta \|^2 ] 
    +(2-\beta)\gamma  \Ee [ \langle F'(\theta_{n-1}), \theta-\theta_{n-1} \rangle ] 
    \\
    &\hspace*{0.4cm}+\sum_{i=1}^2\left\{ 4\gamma^2\alpha_i^2 \sigma_i^2(\theta)
    + \beta\gamma\alpha_i\sigma_i(\theta)
    \sqrt{\Ee[\| \theta_{n-1}-\theta\|^2 ]}\right\} \, ,
\end{aligned}
\end{equation}
where $\sigma_i^2(\theta):=\Ee[\| g_i'(\theta, z)\|^2 ]$, for $i=1,2$.
The convexity of $F_i$ and Cauchy-Schwarz inequality imply
\begin{equation*}
    \langle F'(\theta_{n-1}),\theta-\theta_{n-1}\rangle
    \leq
    \langle F'(\theta),\theta-\theta_{n-1}\rangle
    \leq
    \| F'(\theta)\|\|\theta-\theta_{n-1}\|.
\end{equation*}
Moreover, $\| F'(\theta)\|$ can be upper-bounded as follows:
\begin{align*}
    \| F'(\theta)\| = 
    \| \Ee[ \rho'_1(g_1(\theta, z))g_1'(\theta, z)
    + \rho'_2(g_2(\theta, z))g_2'(\theta, z)]\|
    \leq
    \alpha_1\sigma_1(\theta)
    +\alpha_2\sigma_2(\theta)\, .
\end{align*}
From the last three chains of inequalities
and $0\leq\beta\leq1$,
using $u_n:=\sqrt{\Ee[\|\theta_n-\theta\|^2]}$,
we obtain the following.
\begin{equation*}
    u_n^2 
    \leq
    u_{n-1}^2 
    + 2\gamma\sum_{i=1}^2\alpha_i\sigma_i(\theta)
    + 4 \gamma^2  \sum_{i=1}^2 \alpha_i^2 \sigma^2_i(\theta).
\end{equation*}
For $a=2\gamma \sum_{i=1}^2\alpha_i\sigma_i(\theta)$,
it is straightforward to check that
$4\gamma^2\sum_{i=1}^2 \alpha_i^2 \sigma^2_i(\theta)\leq a^2$
and
\begin{equation*}
    u_n^2
    \leq
    u_{n-1}^2
    +a(u_{n-1}+a).
\end{equation*}
Now consider $v_n=an+u_0$ and let us show by induction on $n$
that $u_n\leq v_n$.
We have $u_0\leq v_0$.
Assume that the latter inequality holds at index $n-1$, we obtain
\begin{align*}
    u_n^2
    &\leq
    v_{n-1}^2
    +a(v_{n-1}+b)
    \\
    &=
    a^2(n-1)^2+2a(n-1)u_0+u_0^2+a^2(n-1)+au_0+a^2
    \\
    &=
    a^2n^2+2anu_0+u_0^2-a^2(n-1)-au_0
    \leq
    v_n^2.
\end{align*}
This concludes the induction and implies that
$u_n\leq an+u_0$ for any $n\geq0$.
We will now use this estimates to obtain
the desired inequality.
From \eqref{eq:aux_sgd_dual} with the convex inequality
$\langle F'(\theta_{n-1}),\theta-\theta_{n-1}\rangle\leq F(\theta)-F(\theta_{n-1})$,
we obtain
\begin{equation*}
    (2-\beta)\gamma(\Ee[F(\theta_{n-1})]-F(\theta))
    \leq
    u_{n-1}^2-u_n^2+\frac{a\beta}2u_{n-1}+a^2.
\end{equation*}
Let us consider the Cesaro sum of the latter inequality, we get
\begin{equation*}
      \Ee [ F(\bar{\theta}_{n})] - F(\theta)
      \leq
      \Ee \left[ \frac{1}{n} \sum_{k=0}^{n-1} F(\theta_{k}) \right] - F(\theta)
      \leq
      \frac1{(2-\beta)\gamma}\left(
      \frac{u_0^2}{ n}
      +\frac{a\beta}{2n}\sum_{k=0}^{n-1}u_k+a^2
      \right),
\end{equation*}
where the inequality on the left is obtained from the convexity of $F$.
Recall that $u_n\leq an+u_0$ and $\beta\leq 1$, we get
\begin{equation*}
      \Ee [ F(\bar{\theta}_{n})] - F(\theta)
      \leq
      \frac{u_0^2}{\gamma n}
      +\frac{a^2\beta n}{4\gamma}
      +\frac{a^2}{\gamma}.
\end{equation*}
Moreover, 
$\sigma^2_i(\theta)$ can be controlled as follows, because each $g_i'$ is $L_i$-Lipschitz:
\begin{align*}
    \sigma^2_i(\theta)  \leq 2 \sigma^2_i(\theta_0) + 2 \Ee [\| g'_i(\theta, z) - g'_i(\theta_0, z)\|^2] \leq 2 \sigma^2_i(\theta_0) + 2 L_i^2 \| \theta-\theta_0\|^2 \, .%
\end{align*}
Using the latter inequality along with $\alpha=\max(\alpha_1,\alpha_2)$,
$L=\max(L_1,L_2)$ and $\sigma(\theta)=\max(\sigma_1(\theta),\sigma_2(\theta)$,
we obtain $\beta\leq4\gamma\alpha L$ and
\begin{align*}
    a^2
    =
    4\gamma^2\left(\sum_{i=1}^2\alpha_i\sigma_i(\theta)\right)^2
    \leq
    8\gamma^2\sum_{i=1}^2\alpha_i^2\sigma_i^2(\theta)
    &\leq
    16\gamma^2\sum_{i=1}^2\alpha_i^2(\sigma_i^2(\theta_0)+L_i^2u_0^2)
    \\
    &\leq
    32\gamma^2\alpha^2(\sigma^2(\theta_0)+L^2u_0^2).
\end{align*}
The latter inequalities and imply
\begin{equation*}
      \Ee [ F(\bar{\theta}_{n})] - F(\theta)
      \leq
      \frac{\|\theta-\theta_0\|^2}{\gamma n}
      +32\gamma\alpha^2
      (\gamma \alpha Ln+1)
      (\sigma^2(\theta_0)+L^2\|\theta-\theta_0\|^2).
\end{equation*}
The result follows by subtracting $\inf F$ and taking the infimum over $\theta \in \cH$.%
\end{proof}

\begin{proof}[Proof of Theorem~\ref{thm:dual}]%
    We apply Proposition~\ref{prop:sgd} with $F=-F_M$, the convex non-decreasing functions $\rho_1 = \text{Id}$, $\rho_2 = [\cdot]_+$, with derivatives bounded by $\alpha_1=\alpha_2=1$, and  \begin{align*}
        g_1(V, x) &= - \Lambda (V, x) \qquad~ 
    g_1'(V, x)  = -\mu_0(x) \Phi(x)\\
    g_2(V, x)  & = M Q(V, x) \qquad  g_2'(V, x) = M \Phi(x) - \frac{M}{\rho}\sum_{i=1}^d a_i(x) \frac{\partial \Phi}{\partial x_i}(x)  + 2M \Xi(x) V \, . \end{align*}
    For any $x \in \cX$, $g_1(\cdot, x)$ and $g_2(\cdot, x)$ are convex. $g_1(\cdot, x)$ is $L_1$-smooth with $L_1 = 0$, $g_2(\cdot, x)$ is $L_2$-smooth with $L_2=\frac{M}{\rho^2} m d^2 C_B^2 {c'}_K^2$, where $C_B=\sup_{x \in \cX} \sigma_{\max}(B(x))$ 
    and $ c_K' = \max_{1\leq i\leq d}\sup_{1\leq i\leq d}\|\partial_{x_i}\Phi(x)\|_{\cH}
    =\max_{1\leq i\leq d}\sup_{1\leq i\leq d}|\partial^2_{x_i,y_i}K(x,x)|^{\frac12}$
    is boun\-ded because $K$ is $\cC^2$ on the compact set $\cX$.
Furthermore, we have $\alpha = 1$, $L =  L_2 $ and \begin{align*}
    \sigma^2(0) \leq \max \left\{ C_K^2 \| \mu_0\|_\infty^2, ~ 2 M^2 C_K^2 + \frac{2M^2}{\rho^2} d^2 C_a^2 {c'}_K^2 \right\} := \sigma^2,
\end{align*} 
where $C_K = \|K\|_\infty$ and $C_a = \sup_{x \in \cX}\|a(x)\|_2$.  Setting $\gamma = \frac{1}{4L_2} n^{-2/3}$ satisfies the constraint on the step size $\gamma \leq 1/(4 L)$ for any $n \geq 1$, and we obtain:
       \begin{align} \label{eqn:descent}
     \Ee [F_M(\bar V_n)]   \geq \sup_{V \in  \cH} \left\{  F_M(V)   - \left( \frac{1}{\gamma n} + 32 \gamma^2 n  L^3 + 32 \gamma L^2   \right) \| V\|^2 \right\} 
 \\
  \qquad\qquad - \left( 32 \gamma^2 n  L + 32 \gamma  \right) \sigma^2 \, ,
\end{align}
where we have, for any $n \geq 1$:
\begin{align*}
     1/(\gamma n) &\leq 4L_2n^{-1/3} 
     \hspace*{0.3cm}
     \text{ and }
     \hspace*{0.3cm}
     32 \gamma^2 n  L + 32 \gamma  \leq (10/L) n^{-1/3} \, ,
\end{align*}
which gives the first result for $C_1 = 14Ln^{-1/3}$ and $C_2=(10/L) \sigma^2  $.

In order to prove the convergence result, we need to use the density of $\cH$ in $H^1(\cX)$.  Let $\e >0$. Using Theorem~\ref{thm_duality}, there exits some $V_\e \in \cC^1(\cX)$ such that $\cL V_\e \leq f + R$ and \begin{align} \label{eqn:sub}
    \int_\cX V_\e(x) \mu_0(x) \, \mathrm dx \geq \int_\cX V^*(x) \mu_0(x)  \, \mathrm dx - \e/4 \, .
\end{align}
Observe that $F_M$ is continuous on $H^1(\cX)$.
Then, using the density of $\cH$ in $H^1(\cX)$,
there exists some $\tilde V \in \cH$ such that:
\begin{align} \label{eqn:conti}
    | F_M(\tilde V) - F_M(V_\e) | \leq \e/4 \, .
\end{align}
Furthermore, since $\cL V_\e \leq f + R$, $F_M(V_\e) = \int_\cX V_\e(x) \mu_0(x)$.

Since $\tilde V$ is suboptimal on the right hand side of \eqref{eqn:descent} and subsequently using \eqref{eqn:conti} and \eqref{eqn:sub}, we also have:
\begin{align*}
   \Ee [ F_M(\bar V_n) ] &\geq F_M(\tilde V) - C_1 n^{-1/3} \| \tilde V\|^2_\cH - C_2 n^{-1/3} \\
   & \geq F_M(V_\e) - \e/4 - C_1 n^{-1/3} \| \tilde V\|^2_\cH - C_2 n^{-1/3}  \\
   & \geq \int_\cX V^*(x) \mu_0(x) \, \mathrm dx - \e/4 - \e/4 - C_1 n^{-1/3} \| \tilde V\|^2_\cH - C_2 n^{-1/3} \, . 
\end{align*}
Let $n_0 = \frac{64}{\e^3} \max \{ C_1^3 \| \tilde V\|_\cH^6 , C_2^3 \}$. As soon as $n \geq n_0$, we then have:
\begin{align} \label{eqn:upper}
    \Ee [ F_M(\bar V_n) ] &\geq \int_\cX V^*(x) \mu_0(x) \, \mathrm dx  - \e \, .
\end{align}
Moreover, using Proposition~\ref{prop:DDM}, \begin{align}\label{eqn:lower}
  \Ee [ F_M(\bar V_n) ] \leq   \sup_{V \in \cH} \Ee [F_M(V)] \leq \sup_{V \in \cC^1(\cX)} \Ee [F_M(V)] = \int_\cX V^*(x) \mu_0(x) \, \mathrm dx
\end{align}  
and combining \eqref{eqn:upper} and \eqref{eqn:lower} gives the result.
\end{proof}

\begin{proof}[Proof of Proposition~\ref{prop:implem}]  First,  notice that~\eqref{eqn:vnx} and~\eqref{eqn:vndx} are direct consequences of~\eqref{eqn:vn} and the reproducing property applied to $\langle V_n, \Phi(x) \rangle_\cH$ and $\langle V_n, \frac{\partial \Phi}{\partial x_j}(x) \rangle_\cH$.

We prove \eqref{eqn:vn} by recursion on $n\geq 1$. Since $V_0=0$, we have $Q(V_0, x^{(1)}) = - f(x^{(1)})  \leq 0 \, $, 
and hence
$ V_1 = \gamma \mu_0(x_1) \Phi(x^{(1)})  $, %
so that \eqref{eqn:vn} holds for $n=1$ with $\alpha_1= \gamma \mu_0(x^{(1)})$ and $\beta_1 = 0$. Let us now assume that \eqref{eqn:vn} holds for $V_{n-1}$. 

Given this representation, we can compute the scalar $V_{n-1}(x^{(n)})$ and the vector $\nabla V_{n-1}(x^{(n)})$ using \eqref{eqn:vnx} and~\eqref{eqn:vndx}, depending only on $x^{(n)}$ and $\alpha_{1:n-1}$ and $\beta_{\cdot, 1:n-1}$. Then, we may compute the following auxiliary variable $s_n:=Q(V_{n-1}, x^{(n)}) $ that depends only on $x^{(n)}$ and $\alpha_{1:n-1}$ and $\beta_{\cdot, 1:n-1}$:
\begin{align*}
    s_n = V_{n-1}(x^{(n)}) - f(x^{(n)}) &- \frac{1}{\rho}\nabla V_{n-1}(x^{(n)})^\top a(x^{(n)}) \\
    & +\frac{1}{2\rho^2}\nabla V_{n-1}(x^{(n)})^\top  B(x^{(n)}) B(x^{(n)})^\top \nabla V_{n-1}(x^{(n)}) \, .
\end{align*}
Finally, since the gradient step is computed as: %
\begin{align*}
  V_n &= V_{n-1} + \gamma \mu_0(x^{(n)}) \Phi(x^{(n)}) \\
  & \qquad \quad ~ - \gamma M \left(\Phi(x^{(n)}) - \frac{1}{\rho}\sum_{i=1}^d a_i(x^{(n)}) \frac{\partial \Phi}{\partial x_i}(x^{(n)}) + 2  \Xi(x^{(n)}) V_{n-1} \right) \times \mathbf{1}_{s_n \geq 0}  \, ,
\end{align*}
then \eqref{eqn:vn} holds at rank $n$, with the following recursion on $\alpha$ and $\beta$:
\begin{align*}
    \alpha_n &= \gamma \mu_0(x^{(n)}) - \gamma M \mathbf{1}_{s_n \geq 0} \,  \\
    \beta_{n, \cdot} &=    \left( \frac{\gamma M}{\rho} a(x^{(n)}) - \frac{\gamma M}{\rho^2}  B(x^{(n)}) B(x^{(n)})^\top \nabla V_{n-1}(x^{(n)}) \right) \mathbf{1}_{s_n \geq 0} \, .
\end{align*}

\end{proof}

\begin{proof}[Proof of Proposition~\ref{prop:boundedgrad}] First, let us prove the convexity of $\Gamma(\cdot, x)$. For any $c, C$, $L$ is a convex function of $(\alpha, \beta)$ as the infimum of a convex function of $(\alpha, \beta, \mu)$ over a convex set. Then $\Gamma(\cdot, x)$ is convex as a composition of $L$ with affine functions of~$w$.

   Let $w \in \cH^m$, $x \in \cX$. We have:
   \begin{align*}
    \frac{\partial}{\partial w} \Gamma(w, x) &= \partial_1 L \left(\|w(x)\|, \mu_0(x) - \frac{1}{\rho}  {\rm div} \left(B(x) w(x) \right) \right)\frac{\partial \|w(x)\|}{\partial w} \\
    & \quad - \frac{1}{\rho} \partial_2 L \left(\|w(x)\|, \mu_0(x) - \frac{1}{\rho}  {\rm div} \left(B(x) w(x) \right) \right) \frac{\partial \left( {\rm div} \left(B(x) w(x) \right) \right)}{\partial w} \, .
\end{align*}
Lemma~\ref{lemma2} provides the expressions of the scalar terms involving $\partial_1 L$ and $\partial_2 L$. The other terms belong to~$\cH^m$ and are computed as follows:
\begin{align*}
    \frac{\partial \|w(x)\|}{ \partial w} &= \frac{w(x)}{ \|w(x)\|}  \otimes \Phi(x)\\
    \frac{\partial \left( {\rm div} \left(B(x) w(x) \right) \right)}{\partial w} &= \sum_{k=1}^d \frac{\partial}{\partial x_k} \left[ B_{k, \cdot} (x) \otimes \Phi(x) \right] \\
    &=  \sum_{k=1}^d  \left[ \frac{\partial B_{k, \cdot}(x)}{\partial x_k}   \otimes \Phi(x) + B_{k, \cdot}(x)\otimes \frac{\partial \Phi(x)}{\partial x_k}   \right] \, ,
\end{align*}
where $\otimes$ denotes the outer product defined, for $v \in \Rr^m$, $h \in \cH$ by: \begin{align*}
    v \otimes h = [v_1 h, \dots, v_m h]^\top \in \cH^m \, .
\end{align*}
Let us now bound the norm of the stochastic gradient. First, we can show that the partial derivatives of $L$ are uniformly bounded. For any $C \geq c \geq 0$, $\alpha \geq 0$ and $\beta \in \Rr$, we have, using Lemma~\ref{lemma2}:
\begin{align*}
   \left| \frac{\partial L_{c, C}}{\partial \alpha} \right| &= \left\{
\begin{array}{cl}
   \left| \left(\frac{q}{q-1}\right)^{\frac{q-1}{q}}  c^{\frac{q-1}{q}}  \right| & \textnormal{if ~} \beta \leq c_0 \alpha \\
    \left| \alpha/\beta  \right|^{q-1}  & \textnormal{otherwise} 
\end{array}
     \right. \\
     & \leq \max \left\{ \left(\frac{q}{q-1}\right)^{\frac{q-1}{q}}  c^{\frac{q-1}{q}}, \left(\frac{1}{c_0}\right)^{q-1}\right\} \,
\end{align*}
where $1/c_0 =  \left(\frac{q-1}{q}\right)^{-1/q}  c^{1/q}$. Similarly:
\begin{align*}
    \left| \frac{\partial L_{c, C}}{\partial \beta} \right| &=\left\{
\begin{array}{cl}
    0 & \textnormal{if ~}\beta \leq c_0 \alpha\\
    \left| c - \frac{q-1}{q} (\alpha/\beta)^q \right| & \textnormal{otherwise } \, .
\end{array}
     \right. \\
     & \leq |c| + \frac{q-1}{q} \left(\frac{1}{c_0}\right)^q  \, .
\end{align*}
In our case, $c = f(x) \leq \| f\|_\infty$, hence we have \begin{align*}
    \left| \frac{\partial L_{c, C}}{\partial \alpha} \right| & \leq \left( \frac{q}{q-1}\right)^{\frac{q-1}{q}} \|f\|_\infty^{\frac{q-1}{q}} =: c_\alpha \\
   \left| \frac{\partial L_{c, C}}{\partial \beta} \right| & \leq 2 \| f\|_\infty =: c_\beta \, .
\end{align*} 
Putting everything together, and noticing that by Rademacher's theorem that $B$ has a bounded gradient almost everywhere, we have, for almost every $x$:
\begin{align*}
\left\|  \frac{\partial \Gamma(w, x)}{\partial w} \right\|_{\cH^m} & \leq c_\alpha \| \Phi(x) \|_\cH + \frac{c_\beta}{\rho}  \| \Phi(x) \|_\cH \left\| \sum_{k=1}^d \frac{\partial B_{k, \cdot(x)}}{\partial x_k}\right\|_2 \\
& \qquad\qquad\qquad + \frac{c_\beta}{\rho}  \sum_{k=1}^d \left\| \frac{\partial \Phi(x)}{\partial x_k} \right\|_\cH  \| B_{k, \cdot}(x)\|_2 \\
& \leq c_\alpha C_K + \frac{d c_\beta C_K C_B'}{\rho} + \frac{d c_\beta C_K' C_B}{\rho} =: D \, .
\end{align*}
\end{proof}

\begin{proof}[Proof of Theorem~\ref{thm:primal}]
    We first need to establish~\eqref{eqn:convG}. The proof is standard and based on classical proof for SGD with bounded gradients (see, \textit{e.g.}, ~\cite[Prop.~11.3]{bach2024learning}. %

Let $w \in \cH^m$, we have:
    \begin{align*}
        \| w_n - w \|^2_{\cH^m} &= \| w_{n-1} - w \|^2_{\cH^m} - 2 \gamma \sum_{i=1}^m \left\langle [w_{n-1}]_i-w_i, \frac{\partial \Gamma(w_{n-1}, x_n)}{\partial w_i} \right\rangle_\cH \\
        &\qquad\qquad\qquad\qquad + \gamma^2 \left\|  \frac{\partial \Gamma(w_{n-1}, x_n) }{\partial w} \right\|_{\cH^m}^2 \\
        & \leq \| w_{n-1} - w \|^2_{\cH^m} - 2  \gamma \left( \Gamma(w_{n-1}, x_n) - \Gamma(w, x_n) \right) + \gamma^2 D^2 \, ,
    \end{align*}
    where we have used the convexity of $\Gamma$.
    Taking expectations, we obtain:
    \begin{align*}
        \Ee \left[ \| w_n - w \|^2_{\cH^m} \right] & \leq \Ee \left[\| w_{n-1} - w \|^2_{\cH^m}\right] - 2  \gamma \left( \Ee [G(w_{n-1})] - G(w) \right) + \gamma^2 D^2 \, .
    \end{align*}
    Reorganizing terms, summing over $k \in \{0, ..., n-1\}$ with a telescopic sum, and dividing by $n$, we obtain:
    \begin{align*}
       \frac{1}{n} \sum_{k=0}^{n-1} \Ee [G(w_{k})] - G(w) & \leq \frac{\|w\|^2_{\cH^m}}{2 \gamma n} + \frac{\gamma D^2}{2 } \, .
    \end{align*}
    By convexity of $G$, we get the first result:
    \begin{align*}
        \Ee [G(\bar w_{n})] \leq G(w) + \frac{\|w\|^2_{\cH^m}}{2 \gamma n} +  \frac{\gamma D^2}{2 } \, .
    \end{align*}

     We now prove the asymptotic convergence.
    For $\e>0$, there exists $\hat{w} \in (H^1(\cX))^m$ such that
    \begin{equation*}
        G(\hat{w})
        \leq 
        \inf_{w\in (H^1(\cX))^m}
        G(w) +\e/4.
    \end{equation*}
    It is easy to check that $G$ is continuous on $(H^1(\cX))^m$
    with respect to the $H^1$ topology.
    Recall that $\cH$ is dense in $H^1(\cX)$,
    there exists $\tilde w \in \cH^m$ such that:
     \begin{align} \label{eqn:dens}
         |G(\tilde w) - G(\hat{w})| \leq \e/4 \, .
     \end{align}
     Using~\eqref{eqn:convG} with $\gamma = \gamma_0/\sqrt{n}$, for some constant $\gamma_0>0$, and then~\eqref{eqn:dens}, we have:
     \begin{align*}
         \Ee [G(\bar w_n)]  & \leq G(\tilde w) + \frac{\| \tilde w\|_{\cH^m}^2}{2  \gamma_0 \sqrt{n}}  + \frac{\gamma_0 D^2}{2 \sqrt{n}} \\
         & \leq \inf_{w\in (H^1(\cX))^m}G(w) + \e/2 +  \frac{\| \tilde w\|_{\cH^m}^2}{2  \gamma_0 \sqrt{n}}  + \frac{\gamma_0 D^2}{2 \sqrt{n}} \, .
\end{align*}
Let $n_0 =  \frac{4}{ \e^2} \max \{ \frac{\|\tilde w\|^4_{\cH^m}}{\gamma_0^2}, \gamma_0^2 D^4 \}$. As soon as $n \geq n_0$, we obtain
\begin{align*}
    \inf_{w\in (H^1(\cX))^m}G(w)
    \leq \Ee [ G(\bar w_n) ] 
    &\leq \inf_{w\in (H^1(\cX))^m}G(w)+ \e \, .
\end{align*}

\end{proof}

\begin{proof}[Proof of Lemma~\ref{lemmapolicyeval}]
    For any fixed controller $u_c$, the autonomous system $\dot x = a(x) + B(x)  u_c(x)$ has the same dynamics as the controlled system $\dot {\tilde x} = \tilde a(x) + \tilde B(x) v(x)$, for any controller~$v$, with $\tilde a(x) := a(x) + B(x) u_c(x)$ and $\tilde B(x)=0$. We define the cost functions $\tilde f(x) := f(x) + R( u_c(x))$ and $\tilde R(v) := R(v)$.  Since the dynamics of this system does not depend on $v$, and $R$ is positive, the optimal choice of $v$ is $0$, and the optimal value function is equal to $\int  \rho e^{-\rho t } \tilde f(\tilde x_t) \mathrm  dt$, where $\tilde x_t = x_t$ for all $t$, so that we have  $\tilde V^* \equiv V^{u_c}$. To prove the first part of the Lemma, we need to apply Theorem~\ref{thm_duality} to this new system. Since $\tilde B = 0$, Assumption~\ref{hypo:alpha} is not fulfilled, but we can replace it by any non-negative upper-bound on the optimal controller (since $v^* \equiv 0$).  Concerning Assumption~\ref{hypo:regularity},  it is fulfilled as soon as $u_c$ is uniformly Lipschitz-continuous.   Applying Theorem~\ref{thm_duality}, we then obtain that the primal and dual formulations of the control problem with tildas, respectively equivalent to $(\textnormal{P}(u_c))$  and $(\textnormal{D}(u_c))$, have a common value equal to $\int \tilde V^* \mathrm d\mu_0 = \textnormal{Perf}(u_c)$, hence the result. To prove the second part, we apply the same exact penalty reasoning as in Section~\ref{sec:exactpen}, after remarking that~$V^{u_c}$ is uniformly upper-bounded by~$C_V$, because
    \begin{align*}
        \forall x \in \cX, \quad V^{u_c}(x) \leq \rho \int_{0}^{+\infty} e^{-\rho t} \left( \|f\|_\infty + \sup_{x' \in \cX }\frac{\|u_c(x')\|_2^q}{q} \right) \mathrm dt \leq C_V \, .
    \end{align*}
\end{proof}

\begin{proof}[Proof of Proposition~\ref{prop:subopt}]
Let $u_c:= w/ \mu$. Then the primal objective function at $(w, \mu)$ is such that:
\begin{align*}
J(w,\mu)  &= 
\int_{\X}  \Big[ f(x) +  R (u_c(x)) \Big] \mu(x) \, \mathrm dx \\
&\qquad \qquad  
+ C_V \int_\cX  \left[\mu_0(x) - \mu(x) - \frac{1}{\rho} {\rm div} \left(
a(x) \mu(x) + B(x) u_c(x) \mu(x) \right) \right]_+ \mathrm dx \\
& \geq ~ \inf_{\nu:\cX \to \Rr_+} ~~\int_{\X}  \Big[ f(x) +  R (u_c(x)) \Big] \nu(x) \, \mathrm dx \\
&\qquad \qquad  + C_V \int_\cX  \left[\mu_0(x) - \nu(x) - \frac{1}{\rho} {\rm div} \left(
a(x) \nu(x) + B(x) u_c(x) \nu(x) \right) \right]_+ \mathrm dx \\ 
& = \text{val}(\text{P-P}(u_c)) \\
& = \text{Perf}(u_c)  \, ,
\end{align*}
where the last equality results from the application of Lemma~\ref{lemmapolicyeval} on the candidate controller $u_c$. Furthermore, the pair $(w^*, \mu^*)$ being optimal for problem~\eqref{eqn:Ppen}, we have $J(w^*, \mu^*) = \text{Perf}(u^*)$  and the result follows. 
\end{proof}

\begin{proof}[Proof of Proposition~\ref{prop:projection}]
    Using the reproducing property, we have:
\begin{align*}
    \Pi(V, w, x) &=\frac{1}{\rho}  \left(B(x)^\top \nabla V(x) \right) \otimes \Phi(x) \\
    &\qquad\qquad + \partial_1 S_M \left(w(x), f(x) -   V(x)+ \frac{1}{\rho} \nabla V(x)^\top a(x) \right) \otimes \Phi(x) \, ,
\end{align*}
where $\partial_1 S_M$ is the partial gradient of $S_M$ with respect to the first variable $\alpha$. Let us define similarly $\partial_2 S_M$ as the (scalar) derivative of $S_M$ with respect to the second variable~$\beta$.
\begin{align*}
    &\Delta(V, w, x) = - \mu_0(x) \Phi(x) - \frac{1}{\rho} \sum_{j=1}^d [B(x) w(x)]_j \frac{\partial \Phi}{\partial x_j}(x) \\
    &\quad - \partial_2 S_M \left(w(x), f(x) -   V(x)+ \frac{1}{\rho} \nabla V(x)^\top a(x) \right) \left( - \Phi(x) + \frac{1}{\rho} \sum_{j=1}^d a_j(x) \frac{\partial \Phi}{\partial x_j}(x) \right) .
\end{align*}
Since $V$ has bounded $\cH$-norm, we can bound $V(x)$ and $\nabla V(x)$ as follows:
\begin{align*}
    |V(x)| & = | \langle V, \Phi(x) \rangle_\cH | \leq \|V\|_\cH \| \Phi(x) \|_\cH \leq r_V C_K \\
    \|\nabla V(x)\|_2 & = \sqrt{ \sum_{i=1}^d \left(\langle V, \frac{\partial \Phi}{\partial x_i}(x) \rangle_\cH \right)^2 } \leq \sqrt{d} r_V C'_K  \, ,
\end{align*}
where $C_K' = \max_i \sup_{x \in \cX} \| \partial_i \Phi(x)\|_\cH$. 
Similarly, we can bound the norm of $w(x)$ by:
\begin{align*}
    \| w(x) \|_2 & \leq r_w C_K \, .
\end{align*}
Moreover, the partial derivatives of $S_M$ remain bounded if $\alpha$ and $\beta$ are both bounded:
\begin{align*}
    \left| \frac{\partial S_M}{\partial \beta} (\alpha, \beta) \right|&= \mu^*(\alpha, \beta) \in [0, M]  \\
    \| \nabla_\alpha S_M (\alpha, \beta) \|_2  &\leq \max\left\{\left( \frac{q}{q-1}\right)^{\frac{q-1}{q}} \beta^{\frac{q-1}{q}} \mathbf{1}_{\beta \geq 0} \, , \,\left(\frac{\|\alpha\|_2}{M}\right)^{q-1} \right\} \, ,
\end{align*}
where, in our case, $\alpha$ and $\beta$ are such that:
\begin{align*}
   \| \alpha \|_2 =  \| w(x) \|_2 &\leq C_K r_w \\
   | \beta  | = \left| f(x) -   V(x) + \frac{1}{\rho} \nabla V(x)^\top a(x) \right| &\leq \|f\|_\infty +  C_K r_V + \frac{\sqrt{d}  C_K'  C_a r_V}{\rho} \, .
\end{align*}
We can finally bound the gradients as follows:
\begin{align*}
\| \Delta(V, w, x) \|_{\cH} & \leq \| \mu_0 \|_\infty C_K + \frac{d C_B C_K C_K' }{\rho} r_w + M C_K + \frac{M d C_a C_K'}{\rho} \\
\| \Pi(V, w, x) \|_{\cH^m} & \leq \frac{\sqrt{d}}{\rho}  C_K C_K' C_B r_V + C_K \left\| \partial_1 S_M (\alpha, \beta) \right\|_2 \\
& \leq \frac{\sqrt{d}}{\rho}  C_K C_K' C_B r_V + C_K \underbrace{\left(\frac{ C_K r_w}{M}\right)^{q-1}}_{\leq 1+ (q-1) \frac{C_K r_w}{M}} \\
& \qquad~ + C_K \left( \frac{q}{q-1}\right)^{\frac{q-1}{q}} \underbrace{\left[\|f\|_\infty + C_K r_V + \frac{\sqrt{d}  C_K'  C_a r_V}{\rho} \right]^{\frac{q-1}{q}}}_{\leq 1+ \frac{q-1}{q} \|f\|_\infty +\frac{q-1}{q} (C_K + \sqrt{d}C_K' C_a / \rho) r_V} \,
\end{align*}
and the result follows from gathering terms.
\end{proof}

\begin{proof}[Proof of Theorem~\ref{thm3}] The upper-bound on the error metric $\cE_r$ is provided by~\cite[Theorem~1]{juditsky2011solving}. In particular, Assumption (4) from~\cite{juditsky2011solving} holds with $L=0$, $M=2 \sqrt{2} c_\Omega(1+2r)$, Assumption (6) from~\cite{juditsky2011solving} holds with $\mu =0$ and $\sigma^2 = 8 c_\Omega^2 (1+2r)^2$ (remarking that $M^2=\sigma^2$), and Assumption (16) from~\cite{juditsky2011solving} holds with $\Omega=\max_{z \in Z_r} \|z\|^2=2r^2$. Theorem~1 from~\cite{juditsky2011solving} hence reads, for any $\gamma >0$ and $r \geq 1$:
\begin{align*}
   \Ee[\cE_r(\bar V_n, \bar w_n) ] &\leq \frac{\Omega^2}{n \gamma} + \frac{21 \gamma M^2}{2}  \\
   & \leq \frac{4 r^4}{n \gamma} + 756 c_\Omega^2  r^2 \gamma \, .
   \end{align*}
    In particular, we can balance both terms (up to constant factors) if we set $\gamma =\frac{r}{14 c_\Omega\sqrt{n}}$, and we obtain:
    \begin{align} \label{eqn:rate}
        \Ee[\cE_r( \bar V_n, \bar w_n) ] \leq 110 c_\Omega \frac{r^3}{\sqrt{n}} \, .
    \end{align}
    The radius $r \geq 1$ constrains the approximation to lie in $Z_r$. To achieve global approximation, we need to take $r \to + \infty$ as $n \to +\infty$, but not too fast, so that~\eqref{eqn:rate} still converges to 0. Therefore, we set $r \propto n^\alpha$, for $\alpha \in (0, 1/6)$, which is slowly growing to infinity. %

\end{proof}

\begin{proof}[Proof of Proposition~\ref{prop:gap}]
    Set $q=2$ and let $\tilde V \in \cH$. We have:
    \begin{align*}
        &\inf_{w : \cX \to \Rr^m} \Ee_{x \sim \cU(\cX)} \Psi(\tilde V, w, x)  = \int_\cX \tilde V \, \mathrm d \mu_0 \\
        & \qquad\qquad + \int_{\cX}\inf_{\mu \in [0, M]} \inf_{w \in \Rr^m}  \left\{ \frac{\|w\|^2}{2 \mu} +\frac{1}{\rho} \nabla \tilde V(x)^\top B(x) w   \right. \\ & \qquad\qquad\qquad\qquad\qquad\qquad\left. + \frac{1}{\rho} \mu \nabla \tilde V(x)^\top a(x) + \mu f(x) - \mu \tilde V(x)\right \}  \mathrm dx \\
        & = \int_\cX \tilde V \, \mathrm d \mu_0 + \int_{\cX}\inf_{\mu \in [0, M]} \left\{ - \frac{\mu}{2 \rho^2} \|B(x)^\top \nabla \tilde V(x) \|_2^2+ \frac{1}{\rho} \mu \nabla \tilde V(x)^\top a(x) \right. \\
       & \qquad\qquad\qquad\qquad\qquad\qquad\left. + \mu f(x) - \mu \tilde V(x) \right\} \mathrm dx \\
       & =  \int_\cX \tilde V \, \mathrm d \mu_0 - M \int_\cX \left[ - f(x) +  \tilde V(x) - \frac{1}{\rho}  \nabla \tilde V(x)^\top a(x) + \frac{1}{2 \rho^2} \|B(x)^\top \nabla \tilde V(x) \|_2^2 \right]_+ \\
       &= F_M(\tilde V) \, .
    \end{align*}
    Now, set $q \in (1, 2]$ and let some $\tilde w \in \cH^m$. We have, using strong duality:
    \begin{align*}
        &\sup_{V:\cX \to \Rr} \Ee_{x \sim \cU(\cX)} \Psi(V, \tilde  w, x) = \inf_{\mu : \cX \to [0, M]} \int_\cX \left(  f(x) + R\left( \frac{\tilde w(x)}{\mu(x)}\right)  \right) \mu(x) \, \mathrm dx \\
        & \qquad\qquad + \int_\cX \sup_{V \in \Rr} ~ \left\{  \mu_0(x)- \mu(x) - \frac{1}{\rho} \text{div}(a(x) \mu(x)+B(x) \tilde w(x))  \right\} V \, \mathrm dx \\
        & \geq \inf_{\mu : \cX \to [0, M]} \int_\cX \left(  f(x) + R\left( \frac{\tilde w(x)}{\mu(x)}\right)  \right) \mu(x) \, \mathrm dx \\
        & \qquad\qquad + \int_\cX \sup_{V \in [0, C_V]} ~ \left\{  \mu_0(x)- \mu(x) - \frac{1}{\rho} \text{div}(a(x) \mu(x)+B(x) \tilde w(x))  \right\} V \, \mathrm dx \\
        &= \inf_{\mu : \cX \to [0, M]} \int_\cX \left(  f(x) + R\left( \frac{\tilde w(x)}{\mu(x)}\right)  \right) \mu(x) \, \mathrm dx \\
        & \qquad\qquad + C_V \int_\cX \left[   \mu_0(x)- \mu(x) - \frac{1}{\rho} \text{div}(a(x) \mu(x)+B(x) \tilde w(x))  \right]_+ \mathrm dx \\
        & = \inf_{\mu : \cX \to [0, M]} J(\tilde w, \mu) \, .
    \end{align*}
    If we assume that $a(x) = 0$ for all $x \in \cX$, then the problem becomes separable in~$x$ for~$\mu$ and $\inf_{\mu : \cX \to [0, M]} J(\tilde w, \mu) = G(\tilde w)$.
\end{proof}

\begin{proof}[Proof of Lemma \ref{lem:bound_gradient}]
For simplicity, we will assume that
    $a,B,f$ are $C^{\infty}$.
    In fact, those extra regularity assumptions will only be
    used to deal with solution of PDEs that are satisfied 
    in the strong sense, \textit{i.e.}, at every point.
    To recover the result in the general case, \textit{i.e.}, with less regularity,
    it is sufficient to add mollifiers to those functions
    and let the smoothing parameter tend to $0$:
    the estimates hold since they do not depend on
    the additional regularity assumptions.

    For $\delta,\varepsilon>0$ 
    let us introduce the regularized HJB equation
    \begin{equation}
    \label{eq:HJB_reg}
        V-\varepsilon\Delta V
        +\frac1{\rho}a(x)^{\top}\nabla_xV
        +\frac1{q'}\sigma\left(-\frac1{\rho}B(x)^{\top}\nabla_xV\right)^{q'}
        =
        f(x),
    \end{equation}
    where $\sigma(r)=(\delta^2+\|r\|^2)^{\frac12}$.
    Using standard argument from elliptic regularity,
    $V$ is $\cC^4$, so that the latter PDE can be understood pointwisely
    and can be differentiated in a direction $\xi\in\Rr^d$ with $\|\xi\|=1$
    and get %
\begin{multline*}
        \partial_{\xi}V-\varepsilon\Delta \partial_{\xi}V
        +\frac1{\rho}(\partial_{\xi}B\alpha+B\partial_{\xi}\alpha)^{\top}\nabla_xV
        +\frac1{\rho}a^{\top}\nabla_x\partial_{\xi}V
        \\
        +\rho^{-2} s^{q'-2}\nabla_xV^{\top}B(\partial_{\xi}B^{\top}\nabla_xV+B\nabla_x\partial_{\xi} V)
        =
        \partial_{\xi}f \, 
\end{multline*}
with $s := \sigma\left(-\frac1{\rho}B(x)^{\top}\nabla_xV\right)$.   
Consider $(x,\xi)$ such that $\partial_{\xi}V(x)$ is maximal,
so that $\Delta\partial_{\xi}V\leq 0$ and $\nabla_x\partial_{\xi}V=0$,
we obtain
\begin{align*}
    \sup_{(x',\xi')}\partial_{\xi'}V(x')
    =\partial_{\xi}V(x)
    &\leq
    \rho^{-1}(\|\alpha\|_{\infty}\|\partial_{\xi}B^\top \nabla_xV\|
    +\|\partial_{\xi}\alpha\|_{\infty}\|B^\top \nabla_xV\|)
    \\
    &\hspace*{2cm}
    +\rho^{-2}s^{q'-2}\|B^{\top}\nabla_xV\|\|\partial_{\xi}B^\top \nabla_xV\|
    +\|\partial_{\xi}f\|_{\infty}
    \\
    &\leq
    \rho^{-1}(C_{\partial B}\|\alpha\|_{\infty}
    +\|\partial_{\xi}\alpha\|_{\infty})
    \|B^\top \nabla_xV\|_{\infty}
    \\
    &\hspace*{2cm}
    +\rho^{-2}C_{\partial B} s^{q'-2}\|B^{\top}\nabla_xV\|_{\infty}^2
    +\|\partial_{\xi}f\|_{\infty},
\end{align*}
where we used 
$\partial_{\xi}B\partial_{\xi}B^{\top}\leq C_{\partial B}^2 BB^{\top}$
from Assumption~\ref{hypo:C2}.
Then, we let~$\delta$ and~$\e$ tend to zero,
so that 
$\|\rho^{-1}B^{\top}\nabla_xV\|_{\infty}$
tends to 
$\|\rho^{-1}B^{\top}\nabla_xV^*\|_{\infty}$
which is upper bounded by $C_u^{q-1}$ as proven in
the proof of Proposition \ref{prop:bound}.
In particular, this implies
\begin{equation*}
    \sup_{(x',\xi')}\partial_{\xi'}V^*(x')
    \leq
    C_{\partial V^*}
    =
    C_u^{q-1}C_{\partial B}\|\alpha\|_{\infty}
    +C_u^{q-1}\|\partial_{\xi}\alpha\|_{\infty}
    +C_u^{q}C_{\partial B}
    +\|\partial_{\xi}f\|_{\infty}.
\end{equation*}
We prove that 
$\sup_{(x',\xi')}-\partial_{\xi'}V^*(x')\leq C_{\partial V^*}$
using similar arguments.
To conclude the proof,
it is sufficient to take $C_{\nabla  V^*}=\sqrt{d}\,C_{\partial V^*}$.
\end{proof}

\begin{proof}[Proof of Lemma \ref{lem:SC}]
    We make the same simplification assumptions as in the proof
    of Lemma \ref{lem:bound_gradient}.
    Observe that, if $V$ is semi-concave,
    its semi-concavity constant is upper bounded by
    $\sup_{x\in\cX,\|\xi\|=1}v(x,\xi)$
    where $v$ is defined by
    $v(x,\xi)=\partial^2_{\xi,\xi}V(x)$.
    Therefore, it is sufficient to prove that $v$
    is uniformly bounded under the present assumption 
    to conclude the proof.
    To do so, we start by taking the second derivative 
    of the regularized HJB equation \eqref{eq:HJB_reg}
    in the direction $\xi\in\Rr^d$ with $\|\xi\|=1$
    and obtain
    \begin{align*}
        v
        -\varepsilon\Delta_x v
        =\;&
        -\frac1{\rho}\left(\partial^2_{\xi,\xi}\alpha^{\top}B^{\top}\nabla_xV
        +2\partial_{\xi}\alpha^{\top}\partial_{\xi}
        \left(B^{\top}\nabla_xV\right)
        +\alpha^{\top}\partial^2_{\xi,\xi}(B^{\top}\nabla_xV)\right)
        \\
        &-\frac{s^{q'-2}}{\rho^2}
        \left(\left\|\partial_{\xi}(B^{\top}\nabla_xV)\right\|^2
        +\nabla_xV^{\top}B\partial^2_{\xi,\xi}(B^{\top}\nabla_xV)\right)
        \\
        &-\frac{(q'-2)s^{q'-4}}{\rho^4}
        \left(\nabla_xV^{\top}B\partial_{\xi}(B^{\top}\nabla_xV)\right)^2
        +\partial^2_{\xi,\xi}f
        \\
        =\;&
        -\nabla_xv^{\top}
        \left(\frac1{\rho}B\alpha+\frac{s^{q'-2}}{\rho^2}BB^{\top}\nabla_xV\right)
        -\frac{s^{q'-2}}{\rho^2}\|B^{\top}\nabla_x\partial_{\xi}V\|^2
        \\
        &-\frac{2}{\rho}
        \left(\frac{s^{q'-2}}{\rho}\nabla_xV^{\top}
        \left(\partial_{\xi}BB^{\top}+B\partial_{\xi}B^{\top}\right)
        +\partial_{\xi}a^{\top}\right)\nabla_x\partial_{\xi}V
        \\
        &-\frac1{\rho}\partial^2_{\xi,\xi}a^{\top}\nabla_xV
        -\frac{s^{q'-2}}{\rho^2}
        \left(\nabla_xV^{\top}B\partial^2_{\xi,\xi}B^{\top}\nabla_xV
        +\|\partial_{\xi}B^{\top}\nabla_xV\|^2
        \right)
        \\
        &-\frac{(q'-2)s^{q'-4}}{\rho^4}
        \left(\nabla_xV^{\top}B\partial_{\xi}(B^{\top}\nabla_xV)\right)^2
        +\partial^2_{\xi,\xi}f \, .
    \end{align*}
    Then,  we have 
    \begin{equation*}
        -\rho^{-1}\partial^2_{\xi,\xi}a^{\top}\nabla_xV
        \leq
        C_1
        :=
        \rho^{-1}\|\partial^2_{\xi,\xi}a\|_{\infty}\|\nabla_xV\|_{\infty},
    \end{equation*}
    and,
    using $C_r$ defined by
    $C_r = \|\rho^{-1}B^{\top}\nabla_xV\|_{\infty}$
    and Assumption~\ref{hypo:C2},
    \begin{equation*}
        \frac{s^{q'-2}}{\rho^2}
        \nabla_xV^{\top}B\partial_{\xi,\xi}^2B^{\top}\nabla_xV
        \leq
        C_2
        :=
        \rho^{-1}(\delta^2+C_r^2)^{\frac{q'-2}2}C_r
        \|\partial_{\xi,\xi}B\|_{\infty}\|\nabla_xV\|_{\infty},
    \end{equation*}   
    using $s\leq\sqrt{\delta^2+C_r^2}$.
    To deal with the remaining terms,
    we only consider a couple $(x,\xi)$ that maximizes the quantity $v$.
    For such a choice, we get $\nabla_xv(x)=0$, $\Delta_x v(x)\leq0$,
    and $\nabla_x\partial_{\xi}V(x)=D^2_{x,x}V(x)\xi
    =v\xi$ since $\xi$ is an eigenvector of $D^2_{x,x}V(x)$
    associated to the eigenvalue $v$.
    Consequently, we obtain
    \begin{equation}
    \label{eq:aux_SC}
    \begin{aligned}
        v(x,\xi)
        \leq\;& 
        -\frac{s^{q'-2}}{\rho^2}\|B^{\top}\xi\|^2v^2
        +C_1+C_2+\|\partial^2_{\xi,\xi}f\|_{\infty}
        \\
        &-\frac{2v}{\rho}
        \xi^{\top}\left(\frac{s^{q'-2}}{\rho}
        \left(\partial_{\xi}BB^{\top}+B\partial_{\xi}B^{\top}\right)
        \nabla_xV
        +\partial_{\xi}a\right),
    \end{aligned}
    \end{equation}
    It only remains to deal with the terms depending on $v$.
    Let us consider two cases.

    \noindent
    \textbf{First case:} 
    Assume $q=2$, we have $s^{q'-2}=1$ and then,
     \begin{align*}
        -\frac{2v}{\rho}\xi^{\top}\left(\rho^{-1}
        \left(\partial_{\xi}BB^{\top}+B\partial_{\xi}B^{\top}\right)
        \nabla_xV
        +\partial_{\xi}a\right)
        &\leq
        \frac{2v}{\rho}\|B^{\top}\xi\|
        \left(2 C_{\partial B} C_r+\|\partial_{\xi}\alpha\|_{\infty}
         \right. \\
         &  \left.  \qquad\qquad\qquad\qquad\quad + C_{\partial B}\|\alpha\|_{\infty}\right)
        \\
        &\leq
        \frac{v^2}{\rho^2}\|B^{\top}\xi\|
        +C_3,
    \end{align*}
    with $C_3=
        \left(2 C_{\partial B}C_r+\|\partial_{\xi}\alpha\|_{\infty}
        + C_{\partial B}\|\alpha\|_{\infty}\right)^2$. 
    The latter inequality and \eqref{eq:aux_SC} imply
    \begin{equation*}
        \sup_{x',\xi'}v(x',\xi')
        =
        v(x,\xi)
        \leq
        C_1+C_2+C_3+\|\partial^2_{\xi,\xi}f\|_{\infty}.
    \end{equation*}
    This proves that $V$ is semi-concave,
    so is $V^*$ by letting $\delta$ and $\varepsilon$ tend to zero.
    To obtain the desired semi-concavity constant,
    recall that $\lim_{\delta,\varepsilon\to0}C_r=C_u^{q-1}$
    as proven in the proof of Proposition \ref{prop:bound}.

    \noindent

    \noindent
    \textbf{Second case:} consider $q<2$ and $\rho>\rho_1=4\sup_{x\in\cX,\|\xi\|=1}-\xi^{\top}D_{x}a(x)\xi$. We obtain
    \begin{align*}
        -\frac{2v}{\rho}\xi^{\top}\left(\frac{s^{q'-2}}{\rho}
        \left(\partial_{\xi}BB^{\top}+B\partial_{\xi}B^{\top}\right)
        \nabla_xV
        +\partial_{\xi}a\right)
        &\leq
        \frac{4s^{q'-2}v}{\rho}\|B^{\top}\xi\|C_{\partial B}C_r
        +\frac{\rho_1}{2\rho}v
        \\
        &\leq
        \frac{s^{q'-2}}{\rho^2}\|B^{\top}\xi\|v^2
        +C_4
        +\frac{v}2,
    \end{align*}
    with $C_4=4(\delta^2+C_r^2)^{\frac{q'-2}2}{C_{\partial B}}^2C_r^2$.
    Using the latter inequality in \eqref{eq:aux_SC},
    then subtracting $\frac{v}2$ on both side of resulting inequality,
    we obtain
    \begin{align*}
        \frac12v(x,\xi)
        =
        \frac12
        \sup_{x',\xi'}v(x',\xi')
        \leq
        C_1+C_2+C_4+\|\partial^2_{\xi,\xi}f\|_{\infty}.
    \end{align*}
    To conclude, it is sufficient to multiply by $2$
    and let tend $\delta$ and $\varepsilon$ to zero, like in the first case.

    In both cases, we obtain the following
    upper bound on the semi-concavity constant
    \begin{multline}
        \label{eq:SC_constant}
        C_{\rm SC}
        =
        \frac{2}{\rho}\left(\|D^2_{x,x}a\|_{\infty}
        +C_u\|D^2_{x,x}B\|_{\infty}C_{\nabla V^*}\right)
        \\
        +2(2C_{\partial B}C_u^{q-1}
        +\|D_x\alpha\|_{\infty}
        +C_{\partial B}\|\alpha\|_{\infty})^2
        +2\|D^2_{x,x}f\|_{\infty}.
    \end{multline}
    \end{proof}

\begin{proof}[Proof of Proposition \ref{prop:bound_mu*}]
    Let $(t,x)\mapsto\Phi_t(x)$ be the flow of the dynamic
    under the optimal control, \textit{i.e.},
    it satisfies
    \begin{align*}
        &\partial_t\Phi_t(x)
        =
        a(\Phi_t(x))+B(\Phi_t(x))u^*(\Phi_t(x))
        \\
        &\text{ with }
        \hspace*{0.2cm}
        u^*(y)
        =
        -\rho^{-(q'-1)}\|B(y)^{\top}\nabla_xV^*(y)\|^{q'-2}B(y)^{\top}\nabla_xV^*(y).
    \end{align*}
    Standard regularity results from \cite{cannarsa2004semiconcave} state that
    there exists a set $E\subset\cX$ such that $\lambda_{\cX}(\cX\backslash E)=0$
    (where $\lambda_{\cX}$ is the Lebesgue measure on the torus)
    and, for all $x\in E$, the value function $V^*$ is $\cC^2$ on 
    an open neighborhood of the optimal trajectory starting from $x$.
    This implies that, almost everywhere, we can define the function
    $\ell:(t,x)\mapsto-\log (\det D_x\Phi_t(x))$.
    Consider $\widetilde{\mu}_t$ as the density of the law of the state at time $t$
    with the initial state distributed according to $\mu_0$.
    It satisfies, for almost all $x\in\cX$,
    \begin{align*}
        \mu_0(x)
        =
        \widetilde{\mu}_t(\Phi_t(x))
        |\det D_x\Phi_t(x)|
        \;\text{ so that }\;
        \widetilde{\mu}_t(\Phi_t(x))
        =
        \mu_0(x)e^{\ell(t,x)} \, .
    \end{align*}
    Using the uniqueness of the solution of Liouville's discounted equation
    and Lemma~\ref{lemma1}, $\mu^*$ is the $\rho$-discounted occupation and
    satisfies
    \begin{equation}
    \label{eq:carac_mu}
        \mu^*(x)
        =
        \int_0^{+\infty}\widetilde{\mu}_t(x)\rho e^{-\rho t}\, \mathrm dt
        =
        \rho\int_0^{+\infty}\mu_0(\Phi_t^{-1}(x))e^{\ell(t,\Phi_t^{-1}(x))} e^{-\rho t}\, \mathrm dt.
    \end{equation}
    Recall that $\mu_0$ is uniformly bounded, it only remains
    to get a convenient upper bound on $\ell$ to conclude on the result.
    This is the purpose of the remainder of this proof.

    For $x\in E$, we can differentiate the PDE satisfied by $\Phi_t(x)$
    and get
    \begin{align*}
        \partial_tD_x\Phi_t(x)
        =
        (D_xa+D_xBu^*+BD_xu^*)(\Phi_t(x))D_x\Phi_t(x) \, ,
    \end{align*}
    where $D_xBu^*$ is a $\Rr^{d\times d}$-matrix such that
    $(D_xBu^*)_{i,j}=\sum_{k=1}^m\partial_{x_j}B_{i,k}u_k$ for $1\leq i,j\leq d$.
    This and Jacobi's formula yield%
    \begin{align*}
        \partial_t\det(D_x\Phi_t(x))
        &=
        \det(D_x\Phi_t(x))
        \tr((D_x\Phi_t(x))^{-1}\partial_t D_x \Phi_t(x)))
        \\
        &=
        \det(D_x\Phi_t(x))
        \tr(D_xa+D_xBu^*+BD_xu^*) (\Phi_t(x)) \, .
    \end{align*}
    This, in turn, implies that
    \begin{equation*}
        \partial_t\ell(t,x)
        =
        -\tr(D_xa+D_xBu^*+BD_xu^*)(\Phi_t(x)) \, .
    \end{equation*}
    Then, observe that
    \begin{align*}
        \tr((D_xBu^*)(\Phi_t(x))
        &\leq
        \|u(\Phi_t(x)\|\sum_{i=1}^d\|\partial_{x_i}B^{\top}(\Phi_t(x))e_i\|
        \\
        &\leq
        \|z(t)\|^{q'-1}
        \sum_{i=1}^dC_{\partial B}\|B^{\top}(\Phi_t(x))e_i\|
        \\
        &\leq
        C_{\partial B}\sqrt{d}\|B\|_{F,\infty}
        \|z(t)\|^{q'-1},
    \end{align*}
    with $z(t)$ defined by $z(t)=-\rho^{-1}(B^{\top}\nabla_xV^*)(\Phi_t(x))$, 
    and
    \begin{align*}
        D_xu^*
        &=
        -\rho^{-(q'-1)}
        \Bigl(\|B^{\top}\nabla_xV^*\|^{q'-2}
        (D_xB^{\top}\nabla_xV^*+B^{\top}D^2_{x,x}V^*)
        \\
        & ~
        +(q'-2)\|B^{\top}\nabla_xV^*\|^{q'-4}
        B^{\top}\nabla_xV^*(y)\nabla_x{V^*}^{\top}B
        (B^{\top}D^2_{x,x}V^*+D_xB^{\top}\nabla_xV^*)
        \Bigr).
    \end{align*}
    Lemma \ref{lem:SC} implies that $V^*$ is $\overline{C}$-semi-concave, if $q=2$, or if $q<2$ and $\rho>\rho_1:=4\sup\{-\xi^{\top}D_xa(x)\xi,x\in\cX,\|\xi\|=1\}=0$ if $a \equiv 0$.  %
    Then $D^2_{x,x}V^*\leq \overline{C}I_d$ almost everywhere, and we obtain
    \begin{align*}
        -\tr(BD_xu^*)
        &=
        \frac{1}{\rho}\|z\|^{q'-4}\tr\left((\|z\|^2I_m+(q'-2) zz^{\top})
        (B^{\top}D^2_{x,x}V^*B+BD_xB^{\top}\nabla V^*)\right)
        \\
        &\leq
        \frac{1}{\rho}(q'-1)\|z\|^{q'-2}
        (\overline{C}+C_{\partial B}\|\nabla_xV^*\|_{\infty})
        \|B\|^2_{F,\infty} \, . 
    \end{align*}
    Consequently,
    for $C_{\ell,1}=C_{\partial B}\sqrt{d}\|B\|_{F,\infty}$
    and $C_{\ell,2}=(\overline{C}+C_{\partial B}\|\nabla_xV^*\|_{\infty})\|B\|^2_{F,\infty}$,
    we obtain
    \begin{equation}
        \label{eq:ineq_ell}
        \partial_t\ell(t,x)
        \leq
        \sup_{x\in\cX}-{\rm div}_x a(x)
        +C_{\ell,1}\|z(t,x)\|^{q'-1}
        +\rho^{-1}C_{\ell,2}\|z(t,x)\|^{q'-2}.
    \end{equation}

    Let us now make out two cases, whether or not we assume $a\equiv0$.%

    \noindent
    \textbf{First case:} Assume $a\not\equiv 0$.
    From the proof of Proposition \ref{prop:bound},
    we have that $\|z\|_{\infty}\leq C_u^{q-1}$, so that
    we obtain
    \begin{align*}
        \ell(t,x)
        &=
        \ell(0,x)
        +\int_0^t\partial_t\ell(s,x)\, \mathrm ds
        \\
        &\leq
        \left(\|{\rm div}_xa\|_{\infty}
        +C_{\ell,1}C_u^{(q-1)(q'-1)}
        +\rho^{-1}C_{\ell,2}C_u^{(q-1)(q'-2)}\right) t
        \\
        &=
        \left(\|{\rm div}_xa\|_{\infty}
        +C_{\ell,1}C_u
        +\rho^{-1}C_{\ell,2}C_u^{2-q}\right)t
        =
        C_{\mu}(\rho)t \, ,
    \end{align*}
    where we used $\ell(0,x)=0$ and Inequality \eqref{eq:ineq_ell}.
    Observe that $C_{\mu}(\rho)$ is decreasing with respect to $\rho$,
    so that there exists $\rho_0>0$ such that $C_{\mu}(\rho)<\rho$ for all
    $\rho>\rho_0$.
    This and Equality \eqref{eq:carac_mu} imply that, for $\rho>\rho_0$,
    we obtain %
    \begin{equation*}
        \mu^*(x)
        \leq
        \rho\|\mu_0\|_{+\infty}\int_0^{\infty}e^{-(\rho-C_{\mu})t}\, \mathrm dt
        =
        \frac{\rho\|\mu_0\|_{\infty}}{\rho-C_{\mu}}.
    \end{equation*}

    \noindent
    \textbf{Second case:} Assume $a\equiv 0$ and $q<2$.  
    Observe that $t\mapsto V^*(\Phi_t(x))$ is a Lyapunov function 
    for almost all $x\in\cX$, \textit{i.e.},
    \begin{align*}
        \partial_tV^*(\Phi_t(x))
        &=
        \nabla_x{V^*}^{\top}(Bu^*)(\Phi_t(x))
        \\
        &=
        -\rho^{-(q'-1)}\|(B^{\top}\nabla_xV)(\Phi_t(x))\|^{q'}
        =
        -\rho\|z(t)\|^{q'}
        \leq0,
    \end{align*}
    where we recall that 
    $z(t)=-\rho^{-1}(B^{\top}\nabla_xV)(\Phi_t(x))$. %
    Therefore, for $t\geq0$, we get
    \begin{equation*}
        \int_0^{t}\|z(s)\|^{q'}\,\mathrm ds
        \leq
        \rho^{-1}(V^*(x)-V^*(\Phi_t(x)))
        \leq 
        \rho^{-1}\|f\|_{\infty},
    \end{equation*}
    where we used $0\leq V^*\leq \|f\|_{\infty}$.
    Then, using Holdër's inequality, we have
    \begin{align*}
        \int_0^{t}\|z(s)\|^{q'-1}\, \mathrm ds
        &\leq
        \left(\int_0^{t}\|z(s)\|^{q'}\, \mathrm ds\right)^{\frac{q'-1}{q'}}t^{\frac1{q'}}
        \leq
        \rho^{-\frac1q}\|f\|_{\infty}^{\frac1q}t^{\frac1{q'}},
        \\
        \int_0^{t}\|z(s)\|^{q'-2}\, \mathrm ds
        &\leq
        \left(\int_0^{t}\|z(s)\|^{q'}\, \mathrm ds\right)^{\frac{q'-2}{q'}}t^{\frac2{q'}}
        \leq
        \rho^{-(\frac2q-1)}\|f\|_{\infty}^{\frac2q-1}t^{\frac2{q'}}.
    \end{align*}
    Using the latter two inequality and integrating Inequality \eqref{eq:ineq_ell}
    as above, we get
    \begin{align*}
        \mu^*(x)
        \leq
        \rho\|\mu_0\|_{\infty}
        \int_0^{+\infty}e^{-\rho t 
        +C_{\ell,1}
        \rho^{-\frac1q}\|f\|_{\infty}^{\frac1q}t^{\frac1{q'}}
        +C_{\ell,2}
        \rho^{-\frac2q}\|f\|_{\infty}^{\frac2q-1}t^{\frac2{q'}}}\, \mathrm dt
        <\infty \, .
    \end{align*}
    This concludes the proof.
\end{proof}

\end{document}